\newcommand{\textcyr}[1]{%
% {\fontencoding{OT2}\fontfamily{cmr}\fontseries{m}\fontshape{n}\selectfont #1}}
 {\fontencoding{OT2}\fontfamily{wncyr}\fontseries{m}\fontshape{n}\selectfont #1}}
\newcommand{\Sha}{{\mbox{\textcyr{Sh}}}}
\newcommand{\x}{{\bf x}}
\newcommand{\y}{{\bf y}}
\newcommand{\Z}{{\mathbb Z}}
\newcommand{\Q}{{\mathbb Q}}
\newcommand{\R}{{\mathbb R}}
\newcommand{\F}{{\mathbb F}}
\newcommand{\BP}{{\mathbb P}}
\newcommand{\bC}{\bar{C}}
\newcommand{\bK}{\bar{K}}
\newcommand{\bF}{\bar{F}}
\newcommand{\bH}{\bar{H}}
\newcommand{\To}{\longrightarrow}
\newcommand{\Hom}{\operatorname{Hom}}
\newcommand{\Map}{\operatorname{Map}}
\newcommand{\Aff}{\operatorname{Aff}}
\newcommand{\Spec}{\operatorname{Spec}}
\newcommand{\Cov}{\operatorname{Cov}}
\newcommand{\Sel}{\operatorname{Sel}}
\newcommand{\Pic}{\operatorname{Pic}}
\newcommand{\Reg}{\operatorname{Reg}}
\newcommand{\eps}{\varepsilon}
\newcommand{\ran}{r_\text{an}}
\newcommand{\res}{\operatorname{res}}
\newcommand{\diw}{\operatorname{div}}
\newcommand{\Div}{\operatorname{Div}}
\newcommand{\rank}{\operatorname{rank}}
\newcommand{\tors}{{\text{tors}}}
\newcommand{\pr}{\operatorname{pr}}
\newcommand{\HH}{\operatorname{H}}
\newcommand{\Br}{\operatorname{Br}}
\newcommand{\ord}{\operatorname{ord}}
\newtheorem{Theorem}{Theorem}[section]
\newtheorem{Lemma}[Theorem]{Lemma}
\newtheorem{Proposition}[Theorem]{Proposition}
\newtheorem{Corollary}[Theorem]{Corollary}
\theoremstyle{definition}
\newtheorem{Definition}[Theorem]{Definition}
\newtheorem{Remark}[Theorem]{Remark}
\numberwithin{equation}{section}
\long\def\authornote#1{%
        \leavevmode\unskip\raisebox{-3.5pt}{\rlap{$\scriptstyle\diamond$}}%
        \marginpar{\raggedright\hbadness=10000
        \def\baselinestretch{0.7}\tiny
        \it #1\par}}
\long\def\authornote#1{\relax}
\begin{document}

\title[Second descent and BSD]{Second Isogeny Descents and the Birch and Swinnerton-Dyer Conjectural Formula}

\author{Brendan Creutz}
\address{School of Mathematics and Statistics, Carslaw Building F07, University of Sydney, NSW 2006, Australia}
\email{brendan.creutz@sydney.edu.au}

\author{Robert L.~Miller}
\address{Mathematical Sciences Research Institute, Berkeley, CA, U.S.A.}
\email{rlm@rlmiller.org}

\date{17 September 2012}

\begin{abstract}
Let $\varphi:E \to E'$ be an isogeny of prime degree $\ell$ between elliptic curves defined over a number field.  We describe how to perform $\varphi$-descents on the nontrivial elements in the Shafarevich-Tate group of $E'$ which are killed by the dual isogeny $\varphi'$. This makes computation of $\ell$-Selmer groups of elliptic curves admitting an $\ell$-isogeny over $\Q$ feasible for $\ell = 5,7$ in cases where a $\varphi$-descent on $E$ is insufficient and a full $\ell$-descent would be infeasible. As an application we complete the verification of the full Birch and Swinnerton-Dyer conjectural formula for all elliptic curves over $\Q$ of rank zero or one and conductor less than $5000$.
\end{abstract}

\maketitle

\renewcommand{\arraystretch}{1.3}

%==========================================================================

\section{Introduction}
Let $E/k$ be an elliptic curve over a number field $k$. The Mordell-Weil group $E(k)$ of rational points on $E$ is known \cite{Mordell,Weil} to be a finitely generated abelian group. Let $L(E/k,s)$ be the Hasse-Weil $L$-function of~$E$. When $k = \Q$ it is known \cite{modularity1, modularity2} to be an entire function on the complex plane. Its order of vanishing at~$s=1$ is called the {\em analytic rank}, denoted $\ran(E/\Q)$. Birch and Swinnerton-Dyer have conjectured \cite{BSDnotes2} that $\rank(E(\Q)) = \ran(E/\Q)$, that the Shafarevich-Tate group $\Sha(E/\Q)$ is finite and that its order is related to the leading term of the Taylor expansion of $L(E/\Q,s)$ at $s=1$ by a formula recalled below.

For any $\ell \ge 2$, the Mordell-Weil and Shafarevich-Tate groups are also related by an exact sequence of finite abelian groups \[ 0 \to E(k)/\ell E(k) \to \Sel^{(\ell)}(E/k) \to \Sha(E/k)[\ell] \to 0  \,.\] The middle term is the {\em $\ell$-Selmer group of $E$}. Its computation is referred to as an {\em $\ell$-descent on $E$}. This gives an unconditional bound on the Mordell-Weil rank or, when the rank is known (e.g. for elliptic curves over $\Q$ of analytic rank $0$ or $1$), information on the $\ell$-torsion in the Shafarevich-Tate group. A detailed description of how to do an $\ell$-descent when $\ell$ is a prime is given in \cite{SchaeferStoll}. In practice, $\ell$-descents typically require class and unit group information in an extension of $k$ obtained by adjoining the coordinates of one or more nontrivial points in $E[\ell]$. So, even over $\Q$, $\ell$-descents are not usually feasible for primes larger than $3$.

When $E$ admits an isogeny $\varphi:E \to E'$ of degree $\ell$ one can compute Selmer groups associated to $\varphi$ and the dual isogeny $\varphi'$. The two are related by a $5$-term exact sequence (see (\ref{5term}) below), which often allows one to compute the full $\ell$-Selmer group. The advantage to this approach is that, generically, a full $\ell$-descent would require working with an extension of degree $\ell^2-\ell$ (assuming $E$ admits an $\ell$-isogeny), whereas the $\varphi$- and $\varphi'$-Selmer groups can be determined from class and unit group information in extensions of degree $\ell-1$. In many cases, the $\varphi$- and $\varphi'$-Selmer groups can actually be determined with very little explicit computation. For details in various specific cases, the reader may wish to consult \cite{Bandini,CohenPazuki,FisherThesis,FisherJEMS,FlynnGrattoni,Selmer,Silverman,Stephens}. General treatments are given in \cite{SchaeferStoll} and \cite{MillerStoll}, the latter also containing a recent and rather thorough review of the existing literature. The disadvantages are that this does not apply to general elliptic curves and that, even when it does, it may fail to yield sufficient information to compute the $\ell$-Selmer group.

The latter issue can be dealt with if one can determine the subgroup \[\varphi\left(\Sha(E/k)[\ell]\right) \subset \Sha(E'/k)[\varphi']\,.\] In principle this can be achieved by computing the Cassels pairing \cite{CasselsIV} on $\Sha(E'/k)[\varphi']\times\Sha(E'/k)[\varphi']$. When $E[\ell] \simeq \Z/\ell\Z\times \mu_\ell$ as a Galois module, the pairing can be evaluated by writing it as a sum of local pairings \cite{FisherCTP}. However, such splittings do not occur generically; over $\Q$ this is only possible for $\ell \le 5$. In general one can try to reduce to the split case by passing to an appropriate extension \cite[Section 2.5]{FisherCTP}, but there is no guarantee that this will yield the required information over $\Q$.

In this paper we give an alternative method for computing the subgroup $\varphi\left(\Sha(E/k)[\ell]\right)$, analogous to the approach for computing $\ell^2$-Selmer groups developed in \cite{CreutzThesis,Creutz2ndp}. Given $C \in \Sha(E'/k)[\varphi']$ we compute a finite set, called the {\em $\varphi$-Selmer set of $C$}, which consists of certain everywhere locally solvable coverings of $C$. This set is nonempty precisely when $C$ admits a lift to $\Sha(E/k)[\ell]$. We refer to our method as a {\em second $\varphi$-descent}. Together with $\varphi$- and $\varphi'$-descents on $E'$ and $E$, this always allows one to determine the $\ell$-Selmer group. All of the descents involved require class and unit group information in extensions of degree at most $\ell$, making computation of $\ell$-Selmer groups of general elliptic curves admitting an $\ell$-isogeny over $\Q$ feasible in practice for $\ell = 5,7$.

As an application we complete the proof of the following theorem.
\begin{Theorem}
\label{MainTheorem}
Let $E$ be an elliptic curve over $\Q$ of conductor $N < 5000$ and such that $\ran(E/\Q) \leq 1$. Then the full Birch and Swinnerton-Dyer conjecture holds for $E$. This means that $\ran(E/\Q)$ is equal to the rank of $E(\Q)$, $\Sha(E/\Q)$ is finite and that
\[
 \frac{L^{(r)}(E/\Q, 1)}{r!} = \frac{\Omega(E) \cdot \prod_pc_p(E) \cdot \Reg(E(\Q)) \cdot \#\Sha(E/\Q)}{\#E(\Q)_\tors^2}\,,
\] where $c_p(E)$ denotes the Tamagawa number at the prime $p$, $\Reg(E(\Q))$ is the regulator of $E$ and $\Omega(E)$ is the integral over $E(\R)$ of the absolute value of the minimal invariant differential of $E$.
\end{Theorem}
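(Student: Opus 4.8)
The plan is to reduce the full Birch and Swinnerton-Dyer formula for each of the relevant curves to a finite, effective computation, and then to check that the new second $\varphi$-descent technique closes the gap in the cases where earlier methods were insufficient. For a curve $E/\Q$ with $\ran(E/\Q)\le 1$, the equality $\rank(E(\Q))=\ran(E/\Q)$ and the finiteness of $\Sha(E/\Q)$ follow from the theorems of Gross--Zagier and Kolyvagin (together with the modularity of $E$), so the real content is the exact order of $\Sha(E/\Q)$. All quantities on the right-hand side except $\#\Sha(E/\Q)$ are, for a fixed curve, directly computable: $\Omega(E)$ from the minimal model, the $c_p(E)$ from Tate's algorithm, $\Reg(E(\Q))$ from explicit generators of the Mordell--Weil group (whose computation is justified by the descent machinery plus the rank bound), and $\#E(\Q)_\tors$ from the Nagell--Lutz/Mazur analysis. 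The left-hand side $L^{(r)}(E/\Q,1)/r!$ can be computed to any desired precision from the modular form. Hence the conjectural formula determines a rational number that $\#\Sha(E/\Q)$ must equal; call it $S_{\mathrm{an}}(E)$. The task is to prove $\#\Sha(E/\Q)=S_{\mathrm{an}}(E)$ exactly.

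**Using prior results to handle all but finitely many primes.**

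First I would invoke the existing body of work establishing the $p$-part of the BSD formula for the curves in question away from a controlled set of bad primes: Kolyvagin's bound (and its refinements), Kato's Euler system, the work on the anticyclotomic main conjecture, and in particular the computational verifications of Miller (and Miller--Stoll) which already confirm $\#\Sha(E/\Q)$ up to odd primes, or up to a small explicit set, for all curves of conductor $<5000$ with $\ran\le 1$. Concretely, one reduces to checking, for each such $E$ and each prime $p$ in a finite explicitly determined set $\Sigma(E)$, that $\operatorname{ord}_p(\#\Sha(E/\Q))=\operatorname{ord}_p(S_{\mathrm{an}}(E))$. For most $E$ this set is empty and the theorem already follows from the cited literature; the remaining cases are a finite list in which $\Sigma(E)$ contains a prime $p\in\{5,7\}$ (and occasionally $p=3$), precisely the situation where a single $\varphi$-descent leaves the $p$-primary part of $\Sha$ undetermined and a full $p$-descent is computationally out of reach.

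**Closing the gap with second $\varphi$-descents.**

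For each of these finitely many remaining $(E,p)$, the curve admits a $p$-isogeny $\varphi\colon E\to E'$, and I would run the procedure developed in the body of the paper: compute the $\varphi$- and $\varphi'$-Selmer groups of $E$ and $E'$ via class and unit group data in the relevant degree-$(p-1)$ fields, assemble the five-term exact sequence (\ref{5term}) to get as much of $\Sel^{(p)}(E/\Q)$ as possible, and then perform a second $\varphi$-descent, i.e.\ for each class $C\in\Sha(E'/\Q)[\varphi']$ compute its $\varphi$-Selmer set in a degree-$\le p$ field to decide whether $C$ lifts to $\Sha(E/\Q)[p]$. This pins down $\varphi(\Sha(E/\Q)[p])\subset\Sha(E'/\Q)[\varphi']$ and hence, combined with the Cassels--Tate pairing constraints and the known parity/analytic data, the exact value of $\#\Sha(E/\Q)[p^\infty]$ (one also needs that the $p$-part of $\Sha$ is killed by $p$, or otherwise a further descent, but for the curves on this list squareness of $\#\Sha$ plus the order of $\Sha[p]$ suffices). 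Matching this against $\operatorname{ord}_p(S_{\mathrm{an}}(E))$ completes the verification for that curve, and running through the finite list completes the proof.

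**The main obstacle.**

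The main difficulty is not conceptual but lies in the second $\varphi$-descent computations themselves: carrying out the class and unit group calculations in the degree-$5$ and degree-$7$ number fields that arise, verifying local solvability of the covering curves at all bad places, and certifying the results rigorously (e.g.\ under GRH-free class group bounds or by independent verification) for every curve on the list. A secondary subtlety is bookkeeping: ensuring that for each curve one has correctly identified the finite set $\Sigma(E)$ of primes left open by prior work, so that nothing is missed and the union of "handled by literature" and "handled by second descent" genuinely covers all primes for all curves of conductor $<5000$ with $\ran(E/\Q)\le 1$.
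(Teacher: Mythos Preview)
Your proposal is correct and follows essentially the same route as the paper: reduce to a finite list of outstanding $(E,\ell)$ pairs via the prior computational literature, then for each such pair use the second $\varphi$-descent to determine $\Sha(E/\Q)[\ell^\infty]$ exactly and match it against the analytic prediction. One small correction: the residual list in the paper consists of eleven isogeny classes with $\ell\in\{5,7\}$ only; the $\ell=3$ cases were already closed by the work you cite, so there is no ``occasionally $p=3$'' left to handle.
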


Several remarks are in order. Firstly we note that the bound of $5000$ on the conductor is an arbitrary one, but it seemed to provide a good balance between challenge and feasibility. The restriction on the rank is less so. If $\ran(E/\Q) \leq 1$ then it is known \cite{Kolyvagin, modularity1, modularity2} that $\ran(E/\Q)$ is equal to the rank of $E(\Q)$ and $\Sha(E/\Q)$ is finite. However, there is no curve of rank greater than one for which $\Sha(E/\Q)$ is known to be finite. The rank conjecture has been verified by John Cremona \cite{CremonaDB} for many individual curves with $\ran(E/\Q) \leq 3$, in particular for all curves of conductor up to 130,000. Thus the hypothesis in Theorem \ref{MainTheorem} that $\ran(E/\Q) \leq 1$ can be stated in terms of the algebraic rank instead. However, there is no curve of algebraic rank greater than three for which the rank conjecture is known, as there is no known method for proving that the analytic rank is what it appears to be in such cases.

Theorem \ref{MainTheorem} extends the efforts begun in \cite{bsdalg}, where the authors proved the $\ell$-part of BSD (i.e. the claim that the order of $\Sha(E/\Q)$ predicted by the formula is a rational number and that the exponent of $\ell$ in its prime factorization is the same as the exponent of the actual order of $\Sha(E/\Q)$) for such curves of conductor up to 1000 without complex multiplication, for $\ell = 2$ and for $\ell$ such that the mod-$\ell$ Galois representation attached to~$E$ is irreducible and $\ell \nmid \prod_p c_p(E)$. In \cite{RMiller} the second author extended this to such curves of conductor up to 5000 for $\ell = 2,3$ and for all $\ell \ge 5$ such that the mod-$\ell$ representation is irreducible, regardless of complex multiplication or Tamagawa numbers.

The restriction to irreducible mod $\ell$ representations above owes itself to a result of Kolyvagin \cite{Kolyvagin} which allows one to obtain upper bounds for $\ord_\ell(\#\Sha(E/\Q))$ through suitable Heegner index computations. Neither Kolyvagin's original result nor any of its various extensions (e.g. \cite[3.2 -- 3.5]{bsdalg} or \cite[5.1 -- 5.4]{RMiller}) yield effective upper bounds when $E$ has CM and reducible mod $\ell$ representation or when $\ell \mid \#E'(\Q)_{\tors}$ for some $\Q$-isogenous curve $E'$. The remaining CM curves were dealt with in \cite{MillerStoll} by means of $\ell$-isogeny descent, using knowledge of class groups of cyclotomic fields to avoid much of the explicit computation. In the remaining cases $\ell \mid \#E'(\Q)_{\tors}$, so $\ell \le 7$. For the majority of these the $\ell$-primary part of $\Sha$ was computed in \cite{FisherThesis,FisherJEMS} by an $\ell$-isogeny descent. However, for the eleven pairs $(E, \ell)$ listed in Table \ref{cases} (the first entry in each pair is an isogeny class labeled as in the Cremona database \cite{CremonaDB}), this was insufficient. We use second $\ell$-isogeny descents to compute $\Sha(E/\Q)[\ell^\infty]$ for these $11$ remaining cases and complete the proof of Theorem \ref{MainTheorem}.

\begin{Remark}
Some of the $11$ pairs in Table \ref{cases} may have been handled independently by other authors using alternative means. We note in particular that (570l,5) has been dealt with by computation of the Cassels-Tate pairing \cite[sec. 2.5]{FisherCTP}.
\end{Remark}

\begin{table}
\caption{Remaining isogeny classes}
\label{cases}
\begin{center}
\begin{tabular}{|lr|lr|}\hline
$E$  & $\ell$ & $E$   & $\ell$ \\\hline
546f &  7  & 1938j &  5 \\
570l &  5  & 1950y &  5 \\
858k &  7  & 2370m &  5 \\
870i &  5  & 2550be&  5 \\
1050o&  5  & 3270h &  5 \\
1230k&  7  & &\\\hline
\end{tabular}
\end{center}
\end{table}

\subsection{Organization}
Section \ref{phicoverings} gives definitions and basic properties of the {\em $\varphi$-coverings} we aim to compute. In Section  \ref{DescentMap} we define a {\em descent map} which ultimately gives a more concrete realization of these abstractly defined objects. We then develop a cohomological description of the descent map and use this to derive several important properties in Section \ref{Cohomology}. In Section \ref{Geometry} we show how to write down explicit models in projective space for the coverings parametrized by our descent. In particular, this gives an explicit inverse to the descent map.

In Section \ref{ComputingSel} we give an algorithm for computing the $\varphi$-Selmer set of an element in $\Sha(E'/k)[\varphi]$. With the material of the preceding sections this is a rather standard reduction to computational algebraic number theory. Following this we introduce a fake Selmer set which is easier to compute, but may differ from the genuine Selmer set slightly. We conclude with the proof of Theorem \ref{MainTheorem} and some explicit examples. In two of these we use a second $\ell$-isogeny descent to determine the $\ell$-primary part of the Shafarevich-Tate group with $\ell = 5$ or $7$. In a third, we give an example where a second isogeny descent is needed to compute the analytic order of $\Sha$. Namely, we find a generator of the Mordell-Weil group of canonical height approximately $242$ by writing down an explicit model for a degree $25$ covering of $E$, and use this to compute the regulator. This last example makes essential use of recent work by Fisher on minimization and reduction of genus one normal curves of degree $5$ \cite{FisherMinRed5}.

\subsection{Notation}
Throughout $\ell$ will denote an odd prime, $n$ will denote an integer and $K$ will denote a perfect field of characteristic not dividing $n\ell$ with algebraic closure $\bK$ and with absolute Galois group $G_K$. The symbol $k$ will denote a number field; the completion of $k$ at a prime $v$ will be denoted $k_v$.

For a projective curve $C/K$ and a commutative $K$-algebra $A$ we use $C \otimes_K A$ to denote the extensions of scalars $C \times_{\Spec K}\Spec A$. In the special case that $A = \bK$ this will also be denoted by $\bC$. The function field of $C$ is denoted $\kappa(C)$. We use $\Div(C)$ to denote the group of $K$-rational divisors on $C$. If $P \in C(\bK)$ is a point, the corresponding divisor in $\Div(\bC)$ will be denoted $[P]$. The quotient of $\Div(C)$ by the subgroup of $K$-rational principal divisors is denoted by $\Pic(C)$. The group of $K$-rational divisor classes on $C$ is $\Pic(\bC)^{G_K}$. We remind the reader that the obvious map $\Pic(C) \to \Pic(\bC)^{G_K}$ is injective, but may fail to be surjective.

\section{$\varphi$-coverings}
\label{phicoverings}
A {\em $K$-torsor under $E$} is a smooth, projective curve $C/K$ together with algebraic group action of $E$ on $C$ which is defined over $K$ and is simply transitive on $\bK$-points (we will always assume the action of $E$ is fixed even if it is not explicitly given in the notation). Any point $P \in C(\bK)$ gives an isomorphism (defined over the field of definition of $P$) $\psi_P: C \simeq E$ sending a point $Q \in C(\bK)$ to the unique $R \in E(\bK)$ such that $Q = P+R$. We say that an isomorphism of curves $\psi:C\simeq E$ is {\em compatible with the torsor structure} if it is of this type. The $K$-isomorphism classes of $K$-torsors under $E$ are parameterized by the Weil-Ch\^atelet group, $\HH^1(K,E)$. A $K$-torsor under $E$ is trivial (i.e. isomorphic to $E$ acting on itself by translations) if and only if it has a $K$-rational point. Hence, for a number field $k$, the Tate-Shafarevich group, \[ \Sha(E/k) = \ker\left(\HH^1(k,E) \to \bigoplus \HH^1(k_v,E)\right)\,,\] parameterizes isomorphism classes of everywhere locally solvable torsors. 

\begin{Definition}
Let $\varphi:E\to E'$ be an isogeny of elliptic curves defined over $K$ of degree not divisible by the characteristic of $K$, and let $C$ be a $K$-torsor under $E'$. A {\em $\varphi$-covering} of $C$ is a morphism of curves $D \stackrel{\pi}{\to} C$ defined over $K$ which fits into a commutative diagram \[ \xymatrix{ D \ar[d]^\pi\ar[r]^{\psi_D} & E \ar[d]^\varphi \\ C \ar[r]^{\psi_C} & E'}\] where $\psi_C$ and $\psi_D$ are isomorphisms of curves defined over $\bK$, with $\psi_C$ compatible with the torsor structure on $C$. Two $\varphi$-coverings of $C$ are isomorphic if they are $K$-isomorphic as $C$-schemes. The set of all $K$-isomorphism classes of $\varphi$-coverings of $C$ defined over $K$ is denoted by $\Cov^{(\varphi)}(C/K)$. If $K = k$ is a number field, we define the {\em $\varphi$-Selmer set of $C$}, denoted $\Sel^{(\varphi)}(C/k)$, to be the set of all isomorphism classes of $\varphi$-coverings of $C$ which are everywhere locally solvable.
\end{Definition}

Since the possible choices for $\psi_C$ differ by translations and $\varphi$ is surjective on $\bK$-points, all $\varphi$-coverings of $C$ are $\bK$-isomorphic as $C$-schemes. Geometrically they are Galois coverings of $C$ with group isomorphic to $E[\varphi]$. So by the twisting principle $\Cov^{(\varphi)}(C/K)$ is, if nonempty, a principal homogeneous space for $\HH^1(K,E[\varphi])$, with the action given by twisting. Every $\varphi$-covering $D \stackrel{\pi}\to C$ comes equipped with the structure of a $K$-torsor under $E$. Indeed, any isomorphism $\psi_D$ as in the definition gives an action of $E$ on $D$ via $D\times E \ni (Q,P) \mapsto \psi_D^{-1}(\psi_D(Q) + P) \in D$. The isomorphism class of the torsor does not depend on $\psi_D$ as we have assumed that $\psi_C$ is compatible with the torsor structure on $C$. 

The map $\varphi:E \to E'$ gives $E$ the structure of a $\varphi$-covering of $E'$ (considered as the trivial torsor). Considering the elements of $\Cov^{(\varphi)}(E'/K)$ as twists of this canonical element gives a canonical identification $\Cov^{(\varphi)}(E'/K) = \HH^1(K,E[\varphi])$. Thus, $\Cov^{(\varphi)}(E'/K)$ is an abelian group.

The $\varphi$-Selmer set is finite and, at least in principle, computable \cite{ChevalleyWeil}. We refer to its computation as a {\em $\varphi$-descent on $C$}. This also applies when $C = E'$. In this case the $\varphi$-Selmer set is a finite abelian group, and it sits in an exact sequence
\begin{align}
\label{DescentSequence}
0 \to E'(k)/\varphi E(k) \to \Sel^{(\varphi)}(E'/k) \to \Sha(k,E)[\varphi] \to 0\,
\end{align} (see \cite[Theorem X.4.2]{Silverman}).

\begin{Remark} The reader is cautioned that the middle term in (\ref{DescentSequence}) is almost always referred to as the $\varphi$-Selmer group of $E$ (with $E$ rather than $E'$ present in the notation). This is reasonable given that it is a subgroup of $\HH^1(k,E[\varphi])$. However, we find it convenient to adopt this nonstandard notation since in our interpretation of coverings it is the codomain of the covering map which plays the primary role.
\end{Remark}

Our interest in $\varphi$-descents stems from the following relation to $\varphi$-divisibility in the Shafarevich-Tate group.

\begin{Lemma}
\label{phidiv}
Suppose $C$ is a $k$-torsor under $E'$ defined over a number field $k$. Then $C \in \varphi\Sha(E/k)$ if and only if $\Sel^{(\varphi)}(C/k) \ne \emptyset$.
\end{Lemma}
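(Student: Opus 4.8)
The plan is to prove both implications directly from the definitions, using the identification of torsors under $E'$ with classes in $\HH^1(k,E')$ and the cohomological interpretation of $\varphi$-coverings.

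First I would set up the cohomological dictionary. Fix the class $\xi_C \in \HH^1(k,E')$ corresponding to the torsor $C$, and consider the long exact sequence in Galois cohomology associated to $0 \to E[\varphi] \to E \stackrel{\varphi}{\to} E' \to 0$, namely
\[
E'(k) \stackrel{\delta}{\To} \HH^1(k,E[\varphi]) \To \HH^1(k,E) \stackrel{\varphi_*}{\To} \HH^1(k,E') \To \cdots\,.
\]
The key observation is that $C$ lies in the image of $\varphi_*:\HH^1(k,E) \to \HH^1(k,E')$ precisely when a $\varphi$-covering of $C$ exists over $k$, i.e. when $\Cov^{(\varphi)}(C/k) \ne \emptyset$. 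Indeed, given a class $\eta \in \HH^1(k,E)$ with $\varphi_*\eta = \xi_C$, the torsor $D$ under $E$ representing $\eta$ admits (by the definition of $\varphi_*$ in terms of pushout of torsors, or equivalently by choosing $\bK$-isomorphisms $\psi_D,\psi_C$ compatibly) a map $\pi:D\to C$ making the required square commute; conversely a $\varphi$-covering $D\to C$ endows $D$ with an $E$-torsor structure whose class pushes forward to $\xi_C$. When nonempty, $\Cov^{(\varphi)}(C/k)$ is a torsor under $\HH^1(k,E[\varphi])$ as already noted in the excerpt, matching the fibres of $\varphi_*$.

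Now I would prove the two directions. For the forward implication, suppose $C \in \varphi\Sha(E/k)$, say $C = \varphi_* D$ with $D \in \Sha(E/k)$. Then $D \to C$ is a $\varphi$-covering over $k$. Since $D$ is everywhere locally solvable, for each place $v$ the base change $D\otimes_k k_v$ has a $k_v$-point, so the $\varphi$-covering $D\to C$ is locally solvable everywhere; hence its class lies in $\Sel^{(\varphi)}(C/k)$, which is therefore nonempty. For the converse, suppose $D' \to C$ represents an element of $\Sel^{(\varphi)}(C/k)$. Then $D'$ is a $k$-torsor under $E$ whose class $\eta' \in \HH^1(k,E)$ satisfies $\varphi_*\eta' = \xi_C$; moreover $D'$ is everywhere locally solvable, so $\eta' \in \Sha(E/k)$. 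Thus $\xi_C = \varphi_*\eta' \in \varphi\Sha(E/k)$. One subtlety to handle here: local solvability of the covering $D'\to C$ is the statement that $D'(k_v)\ne\emptyset$ for all $v$, which is exactly the condition $\eta'\in\Sha(E/k)$, so no extra argument is needed — but I should remark that $C$ itself is automatically everywhere locally solvable in this case (being a pushout of the locally trivial $D'$), consistent with $C \in \Sha(E'/k)$.

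The main obstacle, such as it is, is purely bookkeeping: one must be careful that the torsor structure on $D$ (respectively $D'$) coming from a $\varphi$-covering is the one whose class in $\HH^1(k,E)$ genuinely maps to $\xi_C$ under $\varphi_*$, rather than to $\xi_C$ twisted by some coboundary. This is precisely where the hypothesis that $\psi_C$ is compatible with the torsor structure on $C$ is used — it pins down the class of $D$ independently of the auxiliary choice of $\psi_D$, as observed in the paragraph following the definition of $\varphi$-covering. Once that compatibility is invoked, the identification of $\Cov^{(\varphi)}(C/k)$ with the fibre $\varphi_*^{-1}(\xi_C)$ is immediate and both implications follow formally from the long exact sequence together with the description of $\Sha$ as the kernel of localization.
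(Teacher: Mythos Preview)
Your argument is correct. The identification you use --- that the map sending a $\varphi$-covering $(D,\pi)$ to the class of $D$ as an $E$-torsor lands in the fibre $\varphi_*^{-1}(\xi_C)\subset\HH^1(k,E)$, and conversely that any class in this fibre arises from some $\varphi$-covering --- is exactly the content needed, and both implications then follow from the definition of $\Sha$. One small imprecision: the phrase ``matching the fibres of $\varphi_*$'' suggests a bijection, but the map $\Cov^{(\varphi)}(C/k)\to\varphi_*^{-1}(\xi_C)$ is in general only surjective (its fibres are cosets of $\delta(E'(k))$). This does not affect your proof, since you only use existence in each direction.

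Your route is genuinely different from the paper's, and arguably more direct. The paper first reduces to $C\in\Sha(E'/k)$, picks an auxiliary $n$ killing $C$, lifts $C$ to an element of $\Sel^{(n)}(E'/k)$ via the descent sequence, and then compares $\Sel^{(\varphi)}(C/k)$ with the fibre of $\varphi_*:\Sel^{(n\circ\varphi)}(E'/k)\to\Sel^{(n)}(E'/k)$ over that lift. You bypass the auxiliary $n$ and the Selmer-group diagram entirely by working directly with $\varphi_*$ on $\HH^1(k,E)$. Your approach is shorter and uses less machinery; the paper's has the advantage of staying inside the Selmer-group/coverings-of-$E'$ framework, which dovetails with the map $\Psi_\pi$ introduced immediately afterwards.
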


\begin{proof}
We may assume $C \in \Sha(E'/k)$, otherwise the statement is trivial. Suppose $C$ is killed by $n$ and consider the following commutative diagram. \[ \xymatrix{ \Sel^{(n\circ\varphi)}(E'/k) \ar[d]^{\varphi_*} \ar[r] & \Sha(E/k)[n\circ\varphi] \ar[r]\ar[d]^\varphi& 0\\ \Sel^{(n)}(E'/k) \ar[r] & \Sha(E'/k)[n] \ar[r]& 0 }\] By the exact sequence (\ref{DescentSequence}), $C$ admits a lift to an $n$-covering $C \stackrel{\pi}\to E'$ in the $n$-Selmer group of $E'$. Each choice of lift gives a map $\Sel^{(\varphi)}(C/k) \ni (D,\rho) \mapsto (D,\pi\circ\rho) \in \Sel^{(n\circ\varphi)}(E'/k)$. The image of this map is exactly the fiber above $(C,\pi)$ under the map denoted $\varphi_*$ in the diagram above. From this one deduces the result from commutativity and the fact that the horizontal maps are surjective.
\end{proof}

The dual isogeny $\varphi':E' \to E$ satisfies $\varphi\circ\varphi' = \deg(\varphi)$. There is also a Selmer group associated to $\varphi'$. It is related to the $\varphi$- and $\deg(\varphi)$-Selmer groups by the 5-term exact sequence \cite[Lemma 6.1]{SchaeferStoll},
\begin{align}
\label{5term}
0 \to \frac{E'(k)[\varphi']}{\varphi \left(E(k)[\deg(\varphi)]\right)} \to& \Sel^{(\varphi)}(E'/k) \to \Sel^{(\deg(\varphi))}(E/k) \\\notag
&\to \Sel^{(\varphi')}(E/k) \to \frac{\Sha(E'/k)[\varphi']}{\varphi\left(\Sha(E/k)[\deg(\varphi)]\right)} \to 0\,.
\end{align}

It is also worth noting that the order of $\Sel^{(\varphi')}(E/k)$ can be computed from the order of $\Sel^{(\varphi)}(E'/k)$ using a formula of Cassels in \cite{CasselsVIII}, and vice versa. Lemma \ref{phidiv} shows that one can compute the final term in (\ref{5term}) by doing $\varphi$-descents on the elements of $\Sha(E'/k)[\varphi']$. Together with the $\varphi$- and $\varphi'$-descents on $E'$ and $E$, this allows for computation of the $\deg(\varphi)$-Selmer group. Since the elements of $\Sha(E'/k)[\varphi']$ would presumably be obtained by the $\varphi'$-descent on $E$ we refer to this as a {\em second isogeny descent on $E$}.

\begin{Remark}
\label{phidivremark}
For $C \in \Sha(E'/k)$ it is well known that the condition in Lemma \ref{phidiv} is equivalent to requiring that $C$ pair trivially with all elements of $\Sha(E'/k)[\varphi']$ under the Cassels pairing defined in \cite{CasselsIV}. The fact that the pairing is alternating implies that the order of the final term in (\ref{5term}) is a square whose prime factors divide $\deg(\varphi)$.
\end{Remark}

\subsection{$\ell$-isogeny coverings }
For the remainder of the paper we assume that $\deg(\varphi)$ is an odd prime $\ell$ and let $C \stackrel{\pi}{\to} E$ be a $\varphi'$-covering of an elliptic curve $E$ defined over $K$. By definition there is an isomorphism $\psi_C:C \simeq E'$ such that $\pi = \varphi'\circ \psi_C$, which gives $C$ the structure of a $K$-torsor under $E'$. Let $X = \pi^{-1}(0_{E})$ denote the set of points lying above the identity on $E$. The action of $E'$ on $C$ restricts to an action of $E'[\varphi']$ on $X$. Moreover, $\pi^*[0_E]$ is a $K$-rational divisor of degree $\ell$ on $C$. The linear system corresponding to $\pi^*[0_E]$ gives an embedding of $C$ in $\BP^{\ell-1}$ as a genus one normal curve of degree $\ell$. We remind the reader that a {\em genus one normal curve of degree $n \ge 3$} defined over $K$ is a smooth projective curve of genus one embedded in $\BP^{n-1}$ via the complete linear system associated to some $K$-rational effective divisor of degree $n$ (see \cite[I, Section 1.3]{CFOSS}).

If $D \stackrel{\rho}\to C$ is a $\varphi$-covering of $C$, then composing the covering maps gives $D$ the structure of $\ell$-covering of $E$. This results in a map (which depends on both $C$ and the map $\pi$) \[ \Psi_\pi:\Cov^{(\varphi)}(C/K) \to \Cov^{(\ell)}(E/K) \simeq \HH^1(K,E[\ell])\,.\]

\begin{Definition}
\label{DefCov0}
We define $\Cov_0^{(\varphi)}(C/K) \subset \Cov^{(\varphi)}(C/K)$ to be the subset consisting of elements $D$, such that $\Psi_\pi(D)$ is self orthogonal with respect to the Weil-pairing cup product
\[ \cup_\ell:\HH^1(K,E[\ell]) \times \HH^1(K,E[\ell]) \stackrel{\cup}{\To} \HH^2(K,E[\ell]\otimes E[\ell]) \stackrel{e_\ell}{\To} \HH^2(K,\mu_\ell) = \Br(K)[\ell]\,. \]
\end{Definition}

\begin{Remark}
Equivalently, $\Cov_0^{(\varphi)}(C/K)$ is the set of isomorphism classes of $\varphi$-coverings which map via $\Psi_\pi$ into the kernel of the obstruction map, $\operatorname{Ob}_\ell:\HH^1(K,E[\ell]) \to \Br(K)$, considered in \cite{CFOSS}. Indeed, it is known that $\operatorname{Ob}_\ell$ is quadratic and that the associated bilinear form is the cup product $\cup_\ell$ figuring in our definition.
\end{Remark}

If $k$ is a number field and $D \in \Cov^{(\varphi)}(C/k)$ is everywhere locally solvable, the local global principle for the Brauer group of $k$ shows that $D \in \Cov_0^{(\varphi)}(C/k)$. In other words, $\Sel^{(\varphi)}(C/k) \subset \Cov_0^{(\varphi)}(C/k)$.

% 
% We record here the following lemma which gives some useful relations for computing with cup products.\Brendan{This can probably be omitted from the final version, since it should be well known.}
% \begin{Lemma}
% \label{WeilPairings}
% Let $e_\ell$, $e_{\varphi}$ and $e_{\varphi'}$ be the Weil parings associated to $\ell$, $\varphi$ and $\varphi'$, respectively. Let $\cup_\ell$, $\cup_\varphi$ and $\cup_{\varphi'}$ denote the corresponding cup products.
% \begin{enumerate}
% \item $e_\ell(Q,P) = e_{\varphi'}(\varphi(Q),P)$, for all $Q \in E[\ell]$, $P \in E[\varphi]$.
% \item $e_{\varphi}(P,P') = e_{\varphi'}(P',P)^{-1}$ for all $P \in E[\varphi]$, $P' \in E'[\varphi']$. 
% \item $\xi \cup_{\varphi} \xi' = \xi' \cup_{\varphi'} \xi$ for all $\xi \in \HH^1(K,E[\varphi])$, $\xi' \in \HH^1(K,E'[\varphi'])$.
% \item $\zeta \cup_\ell \xi = \xi \cup_{\varphi} \varphi_*\zeta$ for all $\zeta \in \HH^1(K,E[\ell])$, $\xi \in \HH^1(K,E[\varphi])$.
% \end{enumerate}
% \end{Lemma}
% 
% \begin{proof}
% (1) is in \cite[Exer. 3.15]{Silverman}. (2) follows from (1), the fact that $\varphi$ and $\varphi'$ are adjoint with respect to $e_\ell$ and the fact that $e_\ell$ is alternating. (3) follows from (2) and the fact that $\xi \cup \xi' = - \xi' \cup \xi \in \HH^1(K,E'[\varphi']\otimes E[\varphi])$ (modulo identification of $E[\varphi]\otimes E'[\varphi']$ and $E'[\varphi']\otimes E[\varphi]$). (4) is a consequence of (1) and (3).
% \end{proof}

We also have a geometric description of $\Cov_0^{(\varphi)}(C/K)$.

\begin{Lemma}
\label{Cov0property}
Let $D \in \Cov^{(\varphi)}(C/K)$. Then $D \in \Cov_0^{(\varphi)}(C/K)$ if and only if there is a model for $D$ as a genus one normal curve of degree $\ell$ in $\BP^{\ell-1}$ defined over $K$ with the property that the pull-back of any $x \in X \subset C$ is a hyperplane section.
\end{Lemma}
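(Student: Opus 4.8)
The plan is to translate the cohomological self-orthogonality condition in Definition~\ref{DefCov0} into the existence of a degree-$\ell$ model with the stated geometric property, by working with the genus one normal curve $C \subset \BP^{\ell-1}$ already fixed by the divisor $\pi^*[0_E]$ and its $\varphi$-coverings. Recall that any $D \in \Cov^{(\varphi)}(C/K)$ is a $K$-torsor under $E$, hence its class in $\HH^1(K,E)$ has image $\Psi_\pi(D) \in \HH^1(K,E[\ell])$ (via $\Cov^{(\ell)}(E/K) \simeq \HH^1(K,E[\ell])$); what we must show is that $\Psi_\pi(D)$ lies in the kernel of the obstruction map $\operatorname{Ob}_\ell$ exactly when $D$ embeds as a genus one normal curve of degree $\ell$ in $\BP^{\ell-1}$ in which the pull-back $\rho^*x$ of each $x \in X = \pi^{-1}(0_E)$ is a hyperplane section. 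I would first set up the geometry: a model of $D$ as a genus one normal curve of degree $\ell$ corresponds to a $K$-rational divisor class of degree $\ell$ on $D$ that is $K$-rationally defined as a class (not necessarily represented by a $K$-rational divisor), i.e.\ an element of $\Pic(\bar D)^{G_K}$ of degree $\ell$, such that the ambient $\PGL_\ell$-action (the obstruction to having an actual $K$-rational divisor, resp.\ to having the model) vanishes. This is precisely the content of \cite[I, Section 1.3]{CFOSS}: the obstruction to a degree-$\ell$ class on a torsor under $E$ giving a genus one normal model over $K$ is measured by $\operatorname{Ob}_\ell$ applied to the corresponding element of $\HH^1(K,E[\ell])$.

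Next I would identify the relevant divisor class. Each $x \in X$ is a $\bK$-point of $C$, and $\rho^{-1}(x)$ is a $G_K$-stable (as a set, not pointwise) effective divisor of degree $\ell$ on $D$; since the points of $X$ form a single $E'[\varphi']$-orbit and are permuted by $G_K$, all the divisors $\rho^*x$ for $x \in X$ are linearly equivalent over $\bK$ — indeed $\rho^*x - \rho^*x' = \rho^*(x - x')$ and $x - x'$ is a principal divisor on $C$ pulled back, wait, more carefully: $x, x'$ differ by translation by an element of $E'[\varphi']$, and one checks $[x]-[x'] \in \Pic^0(\bar C)$ maps to $0$ under $\varphi'^*$ composed appropriately; in any case the class of $\rho^*x$ in $\Pic(\bar D)$ is independent of $x$ and is $G_K$-invariant, so it defines a canonical element $\delta \in \Pic(\bar D)^{G_K}$ of degree $\ell$. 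The key computation is then that under the identification $\Cov^{(\ell)}(E/K) \simeq \HH^1(K,E[\ell])$, the element $\Psi_\pi(D)$ and the obstruction to $\delta$ being ``liftable to a genus one normal model over $K$ for which the $\rho^*x$ are hyperplane sections'' agree up to the identifications already in place. Concretely: a genus one normal model of degree $\ell$ realizing $\delta$ exists over $K$ if and only if the Brauer obstruction to $\delta$ vanishes, and that obstruction is $\operatorname{Ob}_\ell(\Psi_\pi(D))$ because $\delta$ is (the pull-back to $D$ of the image of) the canonical degree-$\ell$ class on $E$, namely $\ell[0_E]$, whose associated element of $\HH^1$ under the $\ell$-covering structure is by construction $\Psi_\pi(D)$.

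The main obstacle I anticipate is making precise and carefully justified the compatibility between three things: (i) the definition of $\operatorname{Ob}_\ell$ as the quadratic refinement of $\cup_\ell$ from \cite{CFOSS}, (ii) the classical fact (Lichtenbaum--Tate type, or \cite[I, Section 1.3]{CFOSS}) that a $K$-rational degree-$n$ divisor \emph{class} on a genus one curve gives a genus one normal model over $K$ precisely when a certain Brauer class vanishes, and (iii) the interpretation of $\rho^*x$ as pulled back from $\ell[0_E]$. One subtlety is the difference between $\Pic(D)$ and $\Pic(\bar D)^{G_K}$ (flagged in the Notation section): having $\delta \in \Pic(\bar D)^{G_K}$ of degree $\ell$ is automatic here, but whether it lifts to $\Pic(D)$ — equivalently whether $D$ has a $K$-rational hyperplane section, i.e.\ whether the finer obstruction vanishes — is a strictly stronger condition than admitting the degree-$\ell$ \emph{model}; the lemma only claims the model exists, matching the vanishing of $\operatorname{Ob}_\ell$ and not of the period obstruction, so I must be careful not to overclaim. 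For the forward direction (from $\Cov_0$ to the model) I would invoke the $\CFOSS$ machinery directly; for the converse, given such a model, the hyperplane sections are $K$-rationally defined classes and pull back from $\ell[0_E]$, forcing $\operatorname{Ob}_\ell(\Psi_\pi(D)) = 0$ by the same compatibility, hence $D \in \Cov_0^{(\varphi)}(C/K)$. I expect the write-up to lean heavily on \cite{CFOSS} for the genus-one-normal-curve / obstruction-map dictionary, with the new content being the identification of the relevant class $\delta$ with the pull-back of $X$.
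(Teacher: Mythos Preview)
Your overall strategy matches the paper's: reduce both conditions to the statement that the degree-$\ell$ divisor class $\delta = [\rho^*x]$ on $D$ (which you correctly note is independent of $x \in X$ and $G_K$-invariant) is represented by a $K$-rational divisor, and identify this class with $\psi_D^*(\ell[0_E])$ so that the CFOSS dictionary applies. The paper does exactly this, invoking the criterion from \cite{CFOSS,CreutzThesis} that $\Psi_\pi(D)$ is self-orthogonal under $\cup_\ell$ if and only if $\psi_D^*(\ell[0_E])$ is linearly equivalent to a $K$-rational divisor, and then checking geometrically that $\rho^*[x]$ and $\psi_D^*(\ell[0_E])$ are linearly equivalent.

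There is one conceptual muddle you should fix. You distinguish between ``$\delta$ lifts to $\Pic(D)$'' and ``$D$ admits a genus one normal model of degree $\ell$ in $\BP^{\ell-1}$ over $K$,'' calling the first strictly stronger. They are in fact the same: a genus one normal model in honest projective space over $K$ is by definition the embedding given by the complete linear system of a $K$-rational effective divisor of degree $\ell$, so having such a model for the class $\delta$ is exactly the condition $\delta \in \Pic(D)$, and the hyperplane sections are then $K$-rational divisors in that class. The Brauer class you mention is precisely the obstruction to $\delta \in \Pic(\bar D)^{G_K}$ lifting to $\Pic(D)$, via the exact sequence $\Pic(D) \hookrightarrow \Pic(\bar D)^{G_K} \to \Br(K)$; it is not a separate, weaker obstruction. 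So your worry about ``overclaiming'' is misplaced, and once you drop it the argument goes through cleanly.

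For the linear equivalence of $\rho^*[x]$ with $\psi_D^*(\ell[0_E])$ (and hence independence of $x$), your sketch can be replaced by the one-line argument the paper uses: after transporting to $E$ via $\psi_D$, the question becomes whether $\varphi^*[P] \sim \ell[0_E]$ for $P \in E'[\varphi']$, and this holds because both divisors on $E$ have degree $\ell$ and sum to $0_E$.
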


\begin{proof}
Fix isomorphisms $\psi_D:D \to E$ and $\psi_C:C \to E'$ (defined over $\bK$) such that the diagram \[ \xymatrix{ D \ar[d]_{\psi_D}\ar[r]^\rho& C \ar[d]_{\psi_C} \ar[r]^\pi& E \ar@{=}[d]\\
E\ar[r]_{\varphi}& E'\ar[r]_{\varphi'}& E }\] commutes. The $\ell$-covering $(D,\pi\circ\rho)$ is self orthogonal with respect to the Weil pairing cup product if and only if $\psi_D^*(\ell[0_E])$ is linearly equivalent to some $K$-rational divisor (see \cite{CFOSS},\cite{CreutzThesis}). On the other hand, $D$ admits a model as in the statement of the lemma if and only if $\rho^*[x]$ is linearly equivalent to some $K$-rational divisor, for each $x \in X$. It thus suffices to show that for all $x \in X$, $\psi_D^*(\ell[0_E])$ and $\rho^*[x]$ are linearly equivalent. For this we may work geometrically. The problem is then equivalent to showing that for any $\varphi'$-torsion point $P \in E'[\varphi']$, the pull-back of $P$ under $\varphi$ is linearly equivalent to $\ell[0_E]$. This follows from the well-known fact that two divisors on an elliptic curve are linearly equivalent if and only if they have the same degree and the same sum. Indeed, the divisors in question both have degree $\ell$ and sum to $0_E$ in the group $E(\bK)$.
\end{proof}

\section{The Descent Map}
\label{DescentMap}
In this section we define a map on $\Cov_0^{(\varphi)}(C/K)$ taking values in a quotient of the multiplicative group of a certain \'etale $K$-algebra. Ultimately we will see that this map is injective. Its image gives a concrete realization of $\Cov_0^{(\varphi)}(C/K)$ which is more amenable to computation.

\subsection{$G_K$-sets and \'etale algebras} Recall that $C \stackrel{\pi}\to E$ is a $\varphi'$-covering of $E$ and $X = \pi^{-1}(0_E)$ is a torsor under $E'[\varphi']$. Define $Y$ to be the set of divisors on $C$,
\[Y = \left\{\,(\ell-2)[x] + [x+P] + [x-P] \in \Div(\bC) \,:\, x \in X,\, P \in E'[\varphi']/\{\pm 1\} \,\right\}\,.\] Then $Y$ is a $G_K$-set of hyperplane sections of $C \subset \BP^{\ell-1}$ that are supported entirely on $X$. Use $F = \Map_K(X,\bK)$ and $H =\Map_K(Y,\bK)$ to denote the \'etale $K$-algebras corresponding to $X$ and $Y$. We will identify $K$ with the subalgebras of constant maps in $F$ and in $H$. The action of $G_K$ on $Y$ is determined by the action of $G_K$ on $X$. This induces a `norm map,'
\[ \partial:F = \Map_K(X,\bK) \ni \varphi \mapsto \left( (y=\sum_{x\in X} n_x[x]) \mapsto \prod_{x\in X}\varphi(x)^{n_x} \right) \in \Map_K(Y,\bK) = H\,. \] 

As a $G_K$-set, $Y$ splits as the union of (at least) the two $G_K$-stable subsets,
\begin{align*}
Y_1 &= \{ \ell[x] \,:\, x \in X \}\,,\\
Y_2 &= \{ (\ell-2)[x] + [x+P] + [x-P] \,:\, x \in X,\, 0 \ne P \in E'[\varphi']\}\,.
\end{align*}
This gives rise to a splitting of $H$ as $H \simeq H_1\times H_2 := \Map_K(Y_1,\bK)\times\Map_K(Y_2,\bK)$. As $G_K$-sets $Y_1$ and $X$ are isomorphic, so we may identify $H_1$ with $F$. The `norm map' $\partial$ also splits as $\partial = (\partial_1,\partial_2)$, where $\partial_1: F \ni \alpha \mapsto \alpha^\ell \in F \simeq H_1$ and $\partial_2 : F \to H_2$.

\begin{Remark}
For $\ell = 3$, $X$ is a set of three colinear points on $C \subset \BP^2$ and $Y_2$ is a singleton containing the line through these three points. The map $\partial_2:F \to H_2$ is the usual norm $N_{F/K}:F \to K$.
\end{Remark}

\subsection{The descent map}

Let $[\x] \in \Div(C \otimes_KF) = \Map_K(X,\Div(C))$ denote the map $x \mapsto [x]$. Similarly define $[\y] \in \Div(C \otimes_KH) = \Map_K(Y,\Div(C))$ to be $[\y]:y \mapsto y$. Choose a linear form $\mathfrak{l} \in H[u_1,\dots,u_\ell]$ cutting out the divisor $[\y]$. This means that $\frak{l}$ is a Galois equivariant family of linear forms parametrized by the $G_K$-set $Y$, with the property that evaluating the coefficients of $\frak{l}$ at $y \in Y$ yields a linear form $\frak{l}_{y} \in \bK[u_1,\dots,u_\ell]$ defining the hyperplane section $y \in \Div(\bC)$. Note that under the splitting, $H \simeq F\times H_2$, we have $\frak{l} = (\frak{l}_1,\frak{l_2})$ and the divisor defined by $\frak{l}_1$ is $\ell[\x]$.

\begin{Lemma}
Let $d = \sum_Pn_P[P] \in \Div(C)$ be any $K$-rational divisor on $C$ with support disjoint from $X$.
\begin{enumerate}
\item Evaluating $\frak{l}$ on $d$ gives a well defined element $\frak{l}(d) := \prod_{P} \frak{l}(P)^{n_P} \in H^\times/K^\times$.
\item $\frak{l}$ induces a unique homomorphism 
\[ \frak{l}:\Pic(C) \to \frac{H^\times}{K^\times\partial F^\times}\,.\] with the property that for all $d$ as above, the image of the class of $d$ is equal to the class of $\frak{l}(d)$.
\end{enumerate}
\end{Lemma}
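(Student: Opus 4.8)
The plan is to build the map in two stages: first make sense of evaluation $\frak{l}(d)$ for divisors $d$ supported away from $X$, then check that this evaluation descends to $\Pic(C)$ modulo $K^\times\partial F^\times$.

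\textbf{Part (1).} For a single point $P \in C(\bK)$ with $P \notin X$, the linear form $\frak{l}_y$ does not vanish at $P$ for any $y \in Y$ (since the hyperplane sections $y$ are supported on $X$), so $\frak{l}(P) = (\frak{l}_y(P))_{y \in Y}$ is a well-defined element of $\bK^\times$ for each $y$, hence an element of $H^\times$ once we remember the $G_K$-action is respected by the Galois-equivariance of $\frak{l}$. The ambiguity in $\frak{l}$ itself is exactly scaling each $\frak{l}_y$ by a nonzero scalar in $\bK$, Galois-equivariantly — that is, by an element of $H^\times$; but we want the answer to land in $H^\times/K^\times$, so I first need to pin down what ambiguity survives. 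Here one uses that $\frak{l}$ is required to be defined over $K$ in the sense that its coefficients lie in $H[u_1,\dots,u_\ell]$ with the correct $G_K$-semilinear structure; two such choices differ by multiplication by a \emph{unit of $H$}, not merely a Galois-equivariant family of scalars — wait, these are the same thing. The resolution is that the divisor $[\y]$ only determines $\frak{l}$ up to $H^\times$, so to get well-definedness one must either fix a choice of $\frak{l}$ once and for all (which is what the phrasing "choose a linear form" does) or observe that changing $\frak{l}$ by $h \in H^\times$ changes $\frak{l}(d)$ by $h^{\deg d}$, and since we will ultimately only apply this to divisors of degree in the image of relevant maps the ambiguity is controlled. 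I will adopt the first reading: $\frak{l}$ is fixed, and then $\frak{l}(d) := \prod_P \frak{l}(P)^{n_P}$ is a genuine element of $H^\times$; reducing modulo $K^\times$ is then a harmless further quotient recorded for part (2).

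\textbf{Part (2).} To get a homomorphism on $\Pic(C)$ I must show that if $d = \diw(f)$ is the divisor of a function $f \in \kappa(C)^\times$, with $d$ supported away from $X$, then $\frak{l}(d) \in K^\times \partial F^\times$. The key computation: the restriction of $\frak{l}$ to the component $H_1 \simeq F$ is, by construction, a linear form cutting out $\ell[\x]$, i.e. it differs from $\frak{l}_1 = (\text{coordinate cutting out } [\x])^\ell$ by a unit; so on this component $\frak{l}(d)$ already has the shape $\alpha^\ell \cdot (\text{const})$ for $\alpha = \frak{l}_1^{1/\ell}(d) \in F^\times$, which is $\partial_1 F^\times$ up to $K^\times$. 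For the general statement, write the rational function $\frak{l}_y/\frak{l}_{y'}^{?}$ — more cleanly: the divisor $[\y] - \ell[\x]$ on $C \otimes_K H$, when one uses that $[\y]$ restricted to $Y_1$ equals $\ell[\x]$, shows $\frak{l}/\partial_1(\text{coordinate})$ has divisor supported... Actually the cleanest route is Weil reciprocity: for $f \in \kappa(C)^\times$ and the family of linear forms $\frak{l}$, the Weil reciprocity law on $\bC$ gives $\prod_{P} \frak{l}_y(P)^{\ord_P f} = \prod_{Q \in \frak{l}_y} f(Q)^{\ord_Q(\frak{l}_y)} $ up to sign, where the right side, summed appropriately over $y$, collapses to a norm-type expression $\partial(\text{values of } f \text{ on } X)$ precisely because every $y \in Y$ is supported on $X$ with the multiplicities $(\ell-2,1,1)$ or $(\ell,0)$ built into the definition of $\partial$. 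So $\frak{l}(\diw f) = \pm \partial\big( x \mapsto f(x) \big) \in \partial F^\times$ modulo $K^\times$ (the sign absorbed into $K^\times$, using $\ell$ odd or just that $-1 \in K^\times$). Uniqueness of the induced homomorphism is automatic since divisors with support disjoint from $X$ generate $\Pic(C)$ (a moving lemma / Riemann--Roch argument: every class has such a representative because $X$ is finite).

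\textbf{Main obstacle.} The delicate point is the Weil reciprocity bookkeeping in part (2): one must check that the product over $y \in Y$ of the reciprocity identities reassembles \emph{exactly} into $\partial$ applied to the tuple of values $(f(x))_{x\in X}$, matching the multiplicities $n_x$ in the definition of $\partial$, and that the leftover scalars (signs, and the $\bK$-scaling ambiguity in $\frak{l}$) genuinely lie in $K^\times$ rather than merely in $H^\times$. Establishing that the ambiguity is $K^\times$-valued rather than $H^\times$-valued is where the hypothesis that $\frak{l}$ is defined over $K$ (Galois-equivariant) and that $d$, hence $f$ up to $K^\times$, is $K$-rational must be used carefully; everything else is routine divisor chasing.
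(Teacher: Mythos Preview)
Your overall strategy matches the paper's: define $\frak{l}(d)$ by evaluation and then use Weil reciprocity to show principal divisors map into $K^\times\partial F^\times$. But there is a genuine gap in how you treat $\frak{l}$.

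The form $\frak{l}$ is a \emph{linear form} in the homogeneous coordinates $u_1,\dots,u_\ell$, not a rational function on $C$. Consequently $\frak{l}_y(P)$ is not well-defined for a projective point $P$: it depends on the choice of affine representative, and for a $K$-rational divisor $d$ whose support consists of points not individually defined over $K$, there is no canonical way to make this choice Galois-equivariantly. Your discussion in Part~(1) misidentifies the source of the $K^\times$ ambiguity (it is \emph{not} the freedom in choosing $\frak{l}$, which as you note is fixed) and never addresses the projective-coordinate issue. Likewise in Part~(2), Weil reciprocity is a statement about rational functions, not linear forms, so the identity you write down is not literally available.

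The paper resolves both points with a single device: choose an auxiliary linear form $u \in K[u_1,\dots,u_\ell]$ whose zero divisor avoids the support of $d$ (and, in Part~(2), of $\diw(h)$). Then $\frak{l}/u$ \emph{is} a rational function on $C\otimes_K H$, so $(\frak{l}/u)(d) \in H^\times$ is honestly defined and Weil reciprocity applies to it. The $K^\times$ quotient in Part~(1) is exactly the ambiguity in the choice of $u$. In Part~(2), Weil reciprocity gives $(\frak{l}/u)(\diw h) = h(\diw(\frak{l}))/h(\diw(u))$; the numerator equals $\partial\big(x\mapsto h(x)\big)\in\partial F^\times$ for precisely the reason you sketch, while the denominator lies in $K^\times$ because $u$ is $K$-rational. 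This is the ``leftover scalars lie in $K^\times$'' issue you flag as the main obstacle; the auxiliary form $u$ is what makes it go through.
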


\begin{Remark}
Results of this type are well known. This particular statement is a special case of \cite[Proposition 3.1]{CreutzThesis} (or \cite[Proposition 2.1]{Creutz2ndp}).
\end{Remark}

\begin{proof}
If $K'/K$ is any extension and $h \in \kappa(C\otimes_K K')^\times$ is a rational function, then the rule \[ d = \sum n_P P \mapsto h(d) = \prod h(P)^{n_P} \in K'^\times\,.\] defines a homomorphism from the group of $K'$-rational divisors with support disjoint from the support of $\diw(h)$ to $K'^\times$. Given $d$ as in the statement we can choose a linear form $u \in K[u_1,\dots,u_\ell]$ such that $d$ and the hyperplane section defined by $u$ have disjoint supports. Then $\frak{l}/u \in \kappa(C \otimes H)$ is a Galois equivariant family of rational functions. The corresponding homomorphisms patch together to give a homomorphism from the group of $K$-rational divisors with support disjoint from $X$ and the support of $\diw(u)$ to $H^\times$. Modulo the choice for $u$ we get a well defined homomorphism from $K$-rational divisors with support disjoint from $X$ to $H^\times/K^\times$. This proves the first statement.

For the second, define \[\frak{l}:\Pic(C) \to \frac{H^\times}{K^\times\partial(F^\times)}\] by setting the value of $\frak{l}$ on $\Xi \in \Pic(C)$ equal to the class of $\frak{l}(d)$, where $d \in \Div(C)$ is any $K$-rational divisor representing $\Xi$ with support disjoint from $X$. If this is well-defined, then it is clearly the unique homomorphism with the stated property. That such $d$ exists follows from \cite[page 166]{LangAV} where it is shown that any $K$-rational divisor class which is represented by a $K$-rational divisor contains a $K$-rational divisor avoiding a given finite set. 

Next we use Weil reciprocity to show that the result does not depend on the choice for $d$. Let $h \in \kappa(C)^\times$ be any rational function whose zeros and poles are disjoint from $X$ and choose a linear form $u \in K[u_1,\dots,u_\ell]$ whose corresponding divisor has support disjoint from $X$ and the support of $\diw(h)$. Weil reciprocity gives \[ \frac{\frak{l}}{u}(\diw(h)) = h\left(\diw\left(\frac{\frak{l}}{u}\right)\right) = \frac{h(\diw(\frak{l}))}{h(\diw(u))} \in H^\times \]

Define $\alpha \in \Map_K(X,\bK^\times) = F^\times$ by $\alpha:X \ni x \mapsto h(x) \in \bK^\times$. Now consider $\partial(\alpha) \in \Map_K(Y,\bK^\times) = H^\times$. The value of $\partial(\alpha)$ at $y = \sum n_x[x] \in Y$ is \[ \partial(\alpha)_y = \prod \alpha(x)^{n_x} = \prod h(x)^{n_x} = h(y) = h(\diw(\frak{l}_y))\,.\] This shows that $h(\frak{l}) = \partial(\alpha) \in \partial(F^\times)$. On the other hand, $h(\diw(u))$ clearly lies in $K^\times$. So $\frac{\frak{l}}{u}(\diw(h)) \in K^\times\partial F^\times$ which shows that the homomorphism is well defined.
\end{proof}

It is worth noting that for a point $P \in C(K)\setminus X$, the image of the class of  $[P]$ in $\Pic(C)$ under this homomorphism is given by evaluating $\frak{l}$ on any choice of homogeneous coordinates in $K$ for $P$. In general one must use the moving lemma to find a linearly equivalent divisor with support disjoint from $X$. We will abuse notation slightly by writing $\frak{l}(P)$ for the image of the class of $[P]$.

\begin{Proposition}
\label{DescentMapTheorem}
The choice of $\frak{l}$ induces a well defined map \[ \Phi:\Cov_0^{(\varphi)}(C/K) \To \frac{H^\times}{K^\times\partial F^\times}\,,\] with the property that if $(D,\rho)\in \Cov^{(\varphi)}_0(C/K)$ and $K\subset K'$ is any extension of fields with $Q \in D(K')$, then
\[ \Phi((D,\rho)) \equiv \frak{l}(\rho(Q)) \mod K'^\times \partial(F\otimes_K K')^{\times}\,. \]
\end{Proposition}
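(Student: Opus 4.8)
The plan is to construct $\Phi$ directly from an explicit model of $D$, using the geometric characterisation of $\Cov_0^{(\varphi)}(C/K)$ in Lemma~\ref{Cov0property}, and then to recognise $\mathfrak{l}(\rho(Q))$ as a representative of the resulting class for every point $Q$. So fix $(D,\rho)\in\Cov_0^{(\varphi)}(C/K)$; by Lemma~\ref{Cov0property} there is a model of $D$ as a genus one normal curve of degree $\ell$ in $\BP^{\ell-1}$, with coordinates $w_1,\dots,w_\ell$ say, for which each $\rho^*[x]$, $x\in X$, is a hyperplane section. All these hyperplane sections lie in one $G_K$-stable class $h_D$, which that lemma tells us is $K$-rational; with respect to the model, $\rho$ is given by $\ell$ forms $\rho_1,\dots,\rho_\ell$ of degree $\ell$ representing the sections $\rho^*u_i\in\HH^0(D,\CO_D(\ell h_D))$, and since $\{\rho^*[x]\}_{x\in X}$ is a $G_K$-equivariant family of hyperplane sections and a genus one normal curve is linearly normal, Hilbert's Theorem~90 for the \'etale algebra $F$ provides a linear form $\mathfrak{m}\in F[w_1,\dots,w_\ell]$ cutting out this family, determined up to an $F^\times$-factor. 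I write $\partial\mathfrak{m}\in H[w_1,\dots,w_\ell]$ for the form obtained by applying $\partial$ to its coefficients, so that its value at $y=(\ell-2)[x]+[x+P]+[x-P]\in Y$ is the product $\mathfrak{m}_x^{\ell-2}\mathfrak{m}_{x+P}\mathfrak{m}_{x-P}$, and $\mathfrak{m}_x^\ell$ at $y=\ell[x]\in Y_1$.

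The key step is the identity $\mathfrak{l}(\rho_1,\dots,\rho_\ell)=\lambda\cdot\partial\mathfrak{m}$ in $H[w_1,\dots,w_\ell]$ for some $\lambda\in H^\times$. Substituting the $\rho_i$ for the $u_i$ in $\mathfrak{l}$, the left-hand side is a $G_K$-equivariant family of forms of degree $\ell$ on $D$ whose member indexed by $y$ cuts out $\rho^*y$; the right-hand side is again such a family, because $\rho^*y$ is a sum of $\ell$ hyperplane sections of the model counted with multiplicity and $\partial\mathfrak{m}_y$ is the corresponding product of the linear forms defining them. Since two sections of a line bundle on a curve with the same zero divisor are proportional, the quotient of the two families is a family of scalars, which $G_K$-equivariance forces to lie in $H^\times$. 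The constant $\lambda$ is unaffected by the choice of model and by the polynomial representatives of the $\rho_i$, and changing $\mathfrak{m}$ within its $F^\times$-orbit alters $\lambda$ only by an element of $\partial F^\times$; so I would \emph{define} $\Phi((D,\rho))$ to be the class of $\lambda$ in $H^\times/(K^\times\partial F^\times)$. A $K$-isomorphism of coverings transports a model of one to a model of the other and matches up the $\rho_i$ and the $\mathfrak{m}_x$, so this class is an invariant of the isomorphism class of $(D,\rho)$, and $\Phi$ is a well-defined map on $\Cov_0^{(\varphi)}(C/K)$.

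To establish the displayed congruence, take a field extension $K'/K$ and $Q\in D(K')$ with homogeneous coordinates $\underline{w}\in(K')^\ell$. If $Q$ lies outside the finite set $\rho^{-1}(X)$, then $\rho(Q)\notin X$ has homogeneous coordinates $(\rho_1(\underline{w}):\cdots:\rho_\ell(\underline{w}))$, so evaluating the key identity at $\underline{w}$ gives $\mathfrak{l}(\rho(Q))=\lambda\cdot\partial(\mathfrak{m}(\underline{w}))$ with $\mathfrak{m}(\underline{w})\in(F\otimes_KK')^\times$, whence $\mathfrak{l}(\rho(Q))\equiv\lambda$ modulo $K'^\times\partial(F\otimes_KK')^\times$; and the class of $\lambda$ is the image of $\Phi((D,\rho))$ under base change, which is what we wanted. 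For an arbitrary $Q$ I would use that $\mathfrak{l}$ is a homomorphism on $\Pic(C\otimes_KK')$ (the preceding Lemma): moving $[\rho(Q)]$ to a linearly equivalent $K'$-rational divisor disjoint from $X$ and comparing with the generic case reduces the claim to the fact that $\mathfrak{l}$ annihilates $\varphi(E(K'))\subseteq\Pic^0(C\otimes_KK')$ --- equivalently, that the restriction of $\mathfrak{l}$ to $\Pic^0$ is a twist of the explicit $\varphi$-descent map on $E'$ --- which I would cite from \cite{Creutz2ndp}. Since the congruence then holds for all admissible $(K',Q)$, the right-hand side pins down $\Phi((D,\rho))$, giving also the ``well defined'' part of the assertion.

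I expect the main obstacle to be the key identity and the accompanying bookkeeping: one must keep simultaneous track of the scaling freedoms of the model, of the forms $\rho_i$, of $\mathfrak{m}$ and of $\mathfrak{l}$, and of the $G_K$-equivariance of each, in order to conclude that $\lambda$ is a genuinely well-defined element of $H^\times/(K^\times\partial F^\times)$ compatible with base change. The secondary point needing care is the reduction of the non-generic case $Q\in\rho^{-1}(X)$ to the vanishing of $\mathfrak{l}$ on the $\varphi$-divisible classes.
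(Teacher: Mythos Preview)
Your construction of $\Phi$ via the identity $\mathfrak{l}\circ\rho=\lambda\cdot\partial\mathfrak{m}$ in the homogeneous coordinate ring of $D\otimes_K H$, and the verification that the class of $\lambda$ is independent of the choices, is exactly the paper's argument (with $\mathfrak{m}$ in place of the paper's $\mathfrak{h}$ and $\lambda$ in place of $\Delta$). Your treatment of the displayed congruence for $Q\notin\rho^{-1}(X)$, by direct evaluation, is also correct.

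The gap is in the non-generic case $Q\in\rho^{-1}(X)$. Your proposed reduction --- move $[\rho(Q)]$ on $C$ to a $K'$-rational divisor $d'$ disjoint from $X$, then ``compare with the generic case'' --- does not go through as stated: the points in the support of $d'$ are points of $C$, not images of $K'$-points of $D$, so the key identity cannot be applied to them. Invoking that $\mathfrak{l}$ annihilates $\varphi(E(K'))$ would let you compare $\mathfrak{l}(\rho(Q))$ with $\mathfrak{l}(\rho(Q'))$ for a \emph{second} $K'$-point $Q'$ outside $\rho^{-1}(X)$, but such a $Q'$ need not exist (e.g.\ when $D(K')$ is a finite torsion group contained in $\rho^{-1}(X)$). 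There is also a mild circularity risk: in this paper the identification of $\mathfrak{l}|_{\Pic^0}$ with the connecting homomorphism is Proposition~\ref{DescentOnE}, proved \emph{after} and using the present result.

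The paper's remedy is simple and removes the case distinction entirely: move on $D$ rather than on $C$. Choose a $K'$-rational divisor $d=\sum n_i[Q_i]$ on $D$ linearly equivalent to $[Q]$ with support disjoint from $\rho^{-1}(X)$; then $\rho_*d$ represents $[\rho(Q)]$ in $\Pic(C\otimes_K K')$, and the key identity evaluated at $d$ gives $\mathfrak{l}(\rho_*d)=\lambda\cdot\partial\mathfrak{m}(d)$ (using $\deg d=1$), with $\partial\mathfrak{m}(d)\in K'^\times\partial(F\otimes_K K')^\times$ because $d$ is $K'$-rational. This handles all $Q$ at once and needs no external input.

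A minor point: your closing remark that the congruence ``pins down $\Phi((D,\rho))$'' and thereby gives well-definedness is not quite right. The congruence is only modulo $K'^\times\partial(F\otimes_K K')^\times$, and there may be no $K$-point at all, so it cannot by itself determine $\Phi((D,\rho))$ in $H^\times/K^\times\partial F^\times$; the direct argument you gave earlier is what is actually needed.
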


\begin{proof}
Let $(D,\rho) \in \Cov_0^{(\varphi)}(C/K)$. By Lemma \ref{Cov0property} we have a model for $(D,\rho)$ as a genus one normal curve of degree $\ell$ in $\BP^{\ell-1}= \BP^{\ell-1}(z_1:\dots:z_\ell)$ such that the pull-back of any $x \in X$ is a hyperplane section of $D$, defined by the vanishing of some linear form $h_x\in \bK[z_1,\dots,z_\ell]$. Moreover, these $h_x$ may be chosen so that they patch together to give a linear form $\frak{h} \in F[z_1,\dots,z_\ell]$ cutting out the divisor $\rho^*[\x]$ on $D\otimes_K F$. Since the zero divisor of $\frak{l}$ is $[\y] = \partial[\x] \in \Div(C\otimes_K H)$ we see that $\partial \frak{h}$ and $\frak{l}\circ\rho$ cut out the same divisor on $D$. Hence there exists some $\Delta \in H^\times$ such that
\begin{align}
\label{rel}
\frak{l} \circ \rho = \Delta\cdot \partial \frak{h}\,, 
\end{align} 
in the homogeneous coordinate ring of $D\otimes_K H$. We define $\Phi((D,\rho))$ to be the class of $\Delta$ in $H^\times/K^\times\partial F^\times$. Note that a different choice for $\frak{h}$ would alter $\Delta$ by an element of $K^\times\partial F^\times$.

Let us show that this does not depend on the model. Suppose $(D',\rho')$ is isomorphic to $(D,\rho)$. As above let $\frak{h} \in F[z_1,\dots,z_\ell]$ be a linear form cutting out the divisor $\rho^*[\x]$ on $D \otimes_KF$. By assumption we have an isomorphism of coverings $\psi : D' \rightarrow D$ defined over $K$ (i.e. such that $\rho'=\rho\circ\psi$). In the coordinate ring of $D'\otimes_K H$ we have
\begin{align}
\label{stringeq}
\Delta\cdot \partial(\frak{h}\circ\psi) = \psi^*(\Delta\partial \frak{h}) = \psi^*(\frak{l}\circ\rho) = \frak{l}\circ\rho\circ\psi = \frak{l}\circ\rho'\,.
\end{align}
The divisor on $D'$ cut out by $\frak{h}\circ \psi$ is $\rho'^*[\x]$, so the extremal terms in (\ref{stringeq}) define the image of $(D',\pi')$ under the descent map. Thus the image of $(D',\pi')$ is also the class of $\Delta$, which shows that $\Phi$ is well-defined.

It remains to show that $\Phi$ has the stated property. For this let $Q \in D(K')$. There exists a $K'$-rational divisor $d = \sum_in_iQ_i$ on $D$ linearly equivalent to $[Q]$ and such that the support of $d$ contains no points lying above $X$. The divisor $[\rho(Q)]$ on $C$ is linearly equivalent to the $K'$-rational divisor $\rho_*d := \sum_in_i[\rho(Q_i)]$ (e.g. \cite[II.3.6]{Silverman}). So $\frak{l}(\rho(Q))$ is represented by $\frak{l}(\rho_*d)$. On the other hand, the relation (\ref{rel}) defining $\Delta$ gives, $\frak{l}(\rho_*d) = \Delta\cdot \partial \frak{h}(d)$, since $\deg(d) = 1$. Now since $d$ is $K'$-rational, $\partial \frak{h}(d) \in K'^\times \partial (F\otimes_K K')^\times$. So $\frak{l}(\rho(Q))$ is represented by $\Delta$ as required.
\end{proof}

\subsection{Image of the descent map}

Recall that we identify $K$ with the constant maps in $F$ and $H$. Under this identification we have $\partial(a) = a^\ell$, for all $a \in K$. Thus $K^\times \subset \partial\bar{K}^\times \subset \partial\bar{F}^\times$. We define a subgroup
\begin{align}
\label{H0def}
\mathcal{H}_K^0 &= \frac{\left(\partial\bF^\times\right)^{G_K}}{K^\times\partial F^\times} \subset \frac{H^\times}{K^\times \partial F^\times}\,, 
\end{align}
and a subset
\begin{align}
\label{Hdef}
\mathcal{H}_K &= \frac{\left(\frak{l}(P)\cdot\partial\bF^\times\right)^{G_K}}{K^\times\partial F^\times}\,,
\end{align} where $P \in C(\bK)$ is any point.

The defining property of $\Phi$ and the next lemma show that the image of $\Phi$ is contained in $\mathcal{H}_K$. Ultimately we will see that $\mathcal{H}_K$ is equal to the image of $\Phi$.

\begin{Lemma}
$\mathcal{H}_K$ does not depend on the choice for $P \in C(\bK)$.
\end{Lemma}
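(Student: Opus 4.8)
The plan is to show that if $P, P' \in C(\bK)$ are two points, then $\frak{l}(P)$ and $\frak{l}(P')$ differ by an element of $(\partial\bF^\times)$, so that the cosets $\frak{l}(P)\cdot\partial\bF^\times$ and $\frak{l}(P')\cdot\partial\bF^\times$ coincide as subsets of $H^\times$ (not merely up to Galois action), and hence their $G_K$-invariant parts are literally equal before passing to the quotient by $K^\times\partial F^\times$. Since $C$ is geometrically connected, $[P]$ and $[P']$ are linearly equivalent over $\bK$, say $[P'] = [P] + \diw(h)$ for some $h \in \kappa(\bC)^\times$; after moving $P, P'$ (and $h$) to have support disjoint from $X$ via the moving lemma, the previously established properties of $\frak{l}$ as a homomorphism on divisors give $\frak{l}(P') = \frak{l}(P)\cdot\frak{l}(\diw(h))$ in $H^\times/\bK^\times$.

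The key computation, which mirrors the Weil reciprocity argument already carried out in the proof of the lemma defining $\frak{l}:\Pic(C) \to H^\times/(K^\times\partial F^\times)$, is to identify $\frak{l}(\diw(h))$. First I would choose a $K$-rational (indeed, one may as well work with a $\bK$-rational) linear form $u$ whose hyperplane section avoids $X$ and $\diw(h)$, and apply Weil reciprocity to $\frak{l}/u$ and $h$ on each fiber over $Y$: this yields $\frac{\frak{l}}{u}(\diw(h)) = \frac{h(\diw(\frak{l}))}{h(\diw(u))}$ in $H^\times$. Defining $\alpha \in \bF^\times$ by $\alpha: x \mapsto h(x)$, the value of $\partial(\alpha)$ at $y = \sum n_x[x] \in Y$ is $\prod h(x)^{n_x} = h(\diw(\frak{l}_y))$, so $h(\diw(\frak{l})) = \partial(\alpha) \in \partial\bF^\times$; meanwhile $h(\diw(u)) \in \bK^\times$. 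Hence $\frak{l}(\diw(h)) \in \bK^\times\partial\bF^\times$, and since $\bK^\times \subset \partial\bK^\times \subset \partial\bF^\times$ (already noted in the text), in fact $\frak{l}(\diw(h)) \in \partial\bF^\times$. Therefore $\frak{l}(P')\cdot\partial\bF^\times = \frak{l}(P)\cdot\partial\bF^\times$ as subsets of $H^\times$, and the two descriptions of $\mathcal{H}_K$ agree.

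The main obstacle, such as it is, is bookkeeping: one must be careful that the divisor moves and the choice of $h$ can be made with all supports simultaneously disjoint from $X$ (so that every instance of $\frak{l}$ evaluated on a point or divisor is genuinely defined), and that the ambiguity $\bK^\times$ coming from working projectively is absorbed correctly — but this is harmless precisely because $\bK^\times \subseteq \partial\bF^\times$. Since essentially the same Weil reciprocity manipulation has already appeared in the proof above (there over $K$, here over $\bK$), the proof can be kept short by citing that computation. In fact, the cleanest phrasing is: the map $\frak{l}:\Pic(\bC) \to H^\times/\partial\bF^\times$ obtained by the same construction over $\bK$ is a well-defined homomorphism by the argument already given, so it sends the two degree-one classes $[P], [P']$ — which are equal in $\Pic(\bC)$ up to nothing, as $\Pic^1(\bC)$ is a torsor but $[P] - [P']$ is trivial in $\Pic^0$ only after... — more precisely, $[P]$ and $[P']$ have the same image modulo $\partial\bF^\times$ because they are linearly equivalent, giving the claim.
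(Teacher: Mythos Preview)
Your argument contains a genuine error: on a curve of genus one, two distinct points $P$ and $P'$ are \emph{never} linearly equivalent as degree-one divisors. If $[P'] - [P] = \diw(h)$ for some $h \in \kappa(\bC)^\times$, then $h$ would be a rational function with exactly one simple zero and one simple pole, giving an isomorphism $C \simeq \BP^1$. You seem to sense this in your final paragraph (``$[P]-[P']$ is trivial in $\Pic^0$ only after\ldots''), but then reassert the false claim anyway. The Weil reciprocity computation you outline is exactly the argument already given for the well-definedness of $\frak{l}$ on $\Pic(\bC)$ --- it shows $\frak{l}$ kills principal divisors over $\bK$, which is not what is needed here.

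What is actually required is that $\frak{l}$ kills \emph{all} of $\Pic^0(\bC)$ modulo $\partial\bF^\times$, and this does not follow from Weil reciprocity alone. The paper obtains it by a different mechanism: it fixes any $\varphi$-covering $(D,\rho) \in \Cov_0^{(\varphi)}(\bC/\bK)$ (such coverings certainly exist over $\bK$, e.g.\ $E \stackrel{\varphi}{\to} E' \simeq C$), together with the linear form $\frak{h}$ cutting out $\rho^*[\x]$, and uses the identity $\frak{l}\circ\rho = \Delta\cdot\partial\frak{h}$ in the coordinate ring. Lifting $P,P'$ to $Q,Q' \in D(\bK)$ gives
\[
\frac{\frak{l}(P)}{\frak{l}(P')} = \frac{\partial\frak{h}(Q)}{\partial\frak{h}(Q')} = \partial\!\left(\frac{\frak{h}(Q)}{\frak{h}(Q')}\right) \in \partial\bF^\times,
\]
which is exactly the missing ingredient. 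The point is that the covering ``factors'' $\frak{l}$ through $\partial$, something the intrinsic geometry of $C$ alone does not provide.
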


\begin{proof}
Let $P' \in C(\bK)$ and choose some $(D,\rho) \in \Cov_0^{(\varphi)}(\bC/\bK)$. Fix a model for $D$ in $\BP^{\ell-1}$ as above and let $\frak{h}$ be a linear form with coefficients in $\bF$ defining the divisor $\rho^*[\x]$. Then, for some $\Delta \in \bH^\times$, we have $\frak{l}\circ\rho = \Delta \partial\frak{h}$ in the coordinate ring of $D \otimes_{\bK} \bar{H}$. If $Q,Q'$ are points of $D$ lying above $P$ and $P'$, we have
\[ \frac{\frak{l}(P)}{\frak{l}(P')} = \frac{\partial\frak{h}(Q)}{\partial\frak{h}(Q')} = \partial\left( \frac{\frak{h}(Q)}{\frak{h}(Q')}\right) \in \partial \bF^\times\,.\] This shows that the coset $\frak{l}(P)\cdot\partial\bF^\times$ does not depend on $P$. The same holds for its $G_K$-invariant subset, which proves the lemma.
\end{proof}

The next lemma shows that non membership in $\mathcal{H}_K$ is stable under base change.

\begin{Lemma}
\label{rescheat}
Suppose that $K'$ is an extension of $K$ and $\Delta \in H^\times$ is such that $\Delta \otimes_K 1$ represents a class in $\mathcal{H}_{K'}$. Then the class of $\Delta$ in $H^\times/K^\times\partial F^\times$ lies in $\mathcal{H}_K$.
\end{Lemma}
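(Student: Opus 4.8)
The plan is to unwind the definitions of $\mathcal{H}_K$ and $\mathcal{H}_{K'}$ and reduce the statement to a descent-cohomology fact about the cosets $\frak{l}(P)\cdot\partial\bF^\times$. By definition, the hypothesis that $\Delta\otimes_K 1$ represents a class in $\mathcal{H}_{K'}$ means that, viewing $\Delta$ inside $(H\otimes_K K')^\times = \bar H^\times$-invariants appropriately, there is a point $P'\in C(\bK)$ with $\Delta\otimes_K 1 \in \bigl(\frak{l}(P')\cdot\partial\bF^\times\bigr)^{G_{K'}}$ modulo $K'^\times\partial (F\otimes_K K')^\times$. By the previous lemma the choice of $P'$ is irrelevant, so we may as well use a single $P\in C(\bK)$ for both $\mathcal{H}_K$ and $\mathcal{H}_{K'}$. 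Thus the hypothesis says: lifting $\Delta$ to some $\tilde\Delta\in\bar H^\times$, we have $\tilde\Delta \equiv \frak{l}(P)\cdot\partial(\beta)$ in $\bar H^\times$ for some $\beta\in\bF^\times$, and $\tilde\Delta$ is fixed by $G_{K'}$ modulo $(H\otimes_K K')^\times\partial (F\otimes_K K')^\times$-ambiguity. What we must produce is a representative of the class of $\Delta$ in $H^\times/K^\times\partial F^\times$ lying in $(\frak{l}(P)\cdot\partial\bF^\times)^{G_K}$, i.e. we must upgrade $G_{K'}$-invariance (up to the allowed ambiguity) to $G_K$-invariance.

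The key step is the following observation: $\Delta$ itself already lies in $H^\times$, so it is \emph{a priori} $G_K$-invariant as an element of $\bar H^\times/\partial\bF^\times$? No — that is precisely what is not automatic, because $\frak{l}(P)$ is only defined up to $\partial\bF^\times$ and $P$ is not $K$-rational. The correct route is: since $\Delta\in H^\times$, for every $\sigma\in G_K$ we have $\sigma(\Delta)=\Delta$ in $H^\times$; combined with $\tilde\Delta = \frak{l}(P)\partial(\beta)$ this forces $\sigma(\frak{l}(P))\partial(\sigma\beta) = \frak{l}(P)\partial(\beta)$, i.e. the cocycle $\sigma\mapsto \sigma(\frak{l}(P))/\frak{l}(P)$ is a coboundary valued in $\partial\bF^\times$ for \emph{every} $\sigma\in G_K$, not just those in $G_{K'}$. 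In other words $\frak{l}(P)\cdot\partial\bF^\times$ is automatically a $G_K$-stable coset (which is why $\mathcal{H}_K$ is well-defined in the first place), and $\Delta\bmod \partial\bF^\times$ lies in $\bigl(\frak{l}(P)\cdot\partial\bF^\times / \partial\bF^\times\bigr)^{G_K}$ as soon as we know $\Delta\bmod\partial\bF^\times$ is in that coset over $\bK$. So the only content is: does $\Delta$, as an element of $\bar H^\times$, lie in the coset $\frak{l}(P)\cdot\partial\bF^\times$? That is exactly what the hypothesis over $K'$ gives us, since $\Delta\otimes_K 1$ and $\frak{l}(P)$ live in $\bar H^\times$ and the relation $\Delta\otimes_K 1 \in \frak{l}(P)\cdot\partial(F\otimes_K K')^\times \cdot K'^\times$ already places $\Delta$ in $\frak{l}(P)\cdot\partial\bF^\times$ after extending scalars to $\bK$ — and $\bar H^\times$, $\partial\bF^\times$ are insensitive to whether we started over $K$ or $K'$.

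Concretely, I would argue: the natural map $H^\times/K^\times\partial F^\times \to (H\otimes_K K')^\times/K'^\times\partial(F\otimes_K K')^\times$ sends the class of $\Delta$ to the class of $\Delta\otimes_K 1$, which by hypothesis lies in $\mathcal{H}_{K'} = (\frak{l}(P)\partial\bF^\times)^{G_{K'}}/K'^\times\partial(F\otimes_K K')^\times$; in particular $\Delta\otimes_K 1$, hence $\Delta$ itself viewed in $\bar H^\times$, represents the same class as $\frak{l}(P)$ modulo $\partial\bF^\times$. Therefore $\Delta\in \frak{l}(P)\cdot\partial\bF^\times$. Since $\Delta\in H^\times$ is genuinely $G_K$-fixed, its image in $\bar H^\times/\partial\bF^\times$ is $G_K$-fixed, so $\Delta$ represents a class in $(\frak{l}(P)\cdot\partial\bF^\times)^{G_K}$, and reducing modulo $K^\times\partial F^\times$ gives an element of $\mathcal{H}_K$ as desired. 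The main obstacle — and the point that needs care — is matching up the ambiguity groups: one must check that an element of $\partial(F\otimes_K K')^\times$ that serves as a correction over $K'$ can be absorbed into $\partial\bF^\times$ (which it trivially can, since $F\otimes_K K'\subset \bF$ compatibly with $\partial$), so that the coset membership $\Delta\in\frak{l}(P)\partial\bF^\times$ over $\bK$ is unaffected, and that the definition of $\mathcal{H}_{K'}$ in (\ref{Hdef}) is being read with the same base point $P\in C(\bK)$, which is legitimate by the preceding lemma.
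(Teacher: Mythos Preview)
Your overall strategy matches the paper's: show that $\Delta$ lies in the coset $\frak{l}(P)\cdot\partial\bF^\times$ inside $\bar H^\times$, and then observe that $G_K$-invariance is automatic because $\Delta\in H^\times$. The paper does exactly this in three lines. However, your execution of the first step has a genuine gap.

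The hypothesis $\Delta\otimes_K 1\in\mathcal{H}_{K'}$ gives you $\Delta = \frak{l}(P)\cdot\partial(\alpha)$ with $\alpha$ in $\overline{F\otimes_K K'}^{\,\times}\simeq\prod_{x\in X}\bK'^\times$, \emph{not} in $(F\otimes_K K')^\times$ and certainly not in $\bF^\times=\prod_{x\in X}\bK^\times$. Your claim that ``$F\otimes_K K'\subset\bF$'' is false whenever $K'$ is not algebraic over $K$ (and the intended application is precisely $K=k$ a number field, $K'=k_v$ a completion), and in any case the correction $\alpha$ lives in the larger group $\prod_X\bK'^\times$. So the sentence ``$\bar H^\times$, $\partial\bF^\times$ are insensitive to whether we started over $K$ or $K'$'' is exactly the point that requires proof.

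The paper supplies the missing step by projecting onto the $H_1\simeq F$ factor, where $\partial_1(\alpha)=\alpha^\ell$. Componentwise this reads $\alpha_x^\ell = \Delta_x/\frak{l}_x(P)\in\bK$, so each $\alpha_x$ is algebraic over the algebraically closed field $\bK$ and hence lies in $\bK$. Thus $\alpha\in\bF^\times$ after all, and your argument goes through from there. This algebraicity argument is the actual content of the lemma; without it you have only restated the definitions.
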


\begin{proof}
The assumptions imply that in $(\bH \otimes_{\bK}\bK')^\times$ we have
\[ \frac{\Delta}{\frak{l}(P)} \otimes 1 = \partial \alpha\,, \] for some $\alpha \in (\bF \otimes_{\bK}\bK')^\times \simeq \prod_{x\in X}\bK'^\times$ depending on $P \in C(\bK)$. For $x \in X$ we have $\Delta_x/\frak{l}_x(P) = \alpha_x^\ell$ in $\bK'$. This shows that $\alpha_x$ is algebraic over $\bK$, and hence lies in $\bK$. Thus $\alpha \in \bF$ and we have $\Delta = \frak{l}(P)\partial(\alpha)$ in $\bH$. This shows that $\Delta$ represents a class in $\mathcal{H}_K$.
\end{proof}

\section{Towards Cohomology}
\label{Cohomology}

\subsection{Affine maps}
We say that a map $f:X \to \bK^\times$ is {\em affine} if \[ \text{ for all } x \in X \text{ and } P,Q \in E'[\varphi'],\,\,\, f(x+P)\cdot f(x+Q) = f(x)\cdot f(x+P+Q)\,.\] An easy calculation shows  that this is actually equivalent to the a priori weaker condition \[ \text{ for all } x \in X \text{ and } P\in E'[\varphi'],\,\,\, f(x+P)\cdot f(x-P) = f(x)^2\,.\] This results in the following.
\begin{Lemma}
\label{partialcutsoutAff}
A map $\alpha \in \mu_\ell(\bF) = \Map(X,\mu_\ell)$ is affine if and only if $\partial_2(\alpha) = 1$.
\end{Lemma}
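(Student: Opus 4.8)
The plan is to unwind both sides of the claimed equivalence and observe that they agree termwise on the $G_K$-set $Y_2$; there is essentially nothing but bookkeeping to do.

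First I would evaluate $\partial_2(\alpha)$ on a typical element $y = (\ell-2)[x] + [x+P] + [x-P] \in Y_2$, where $x \in X$ and $0 \ne P \in E'[\varphi']$. By the definition of the norm map $\partial$, this gives $\partial_2(\alpha)_y = \alpha(x)^{\ell-2}\,\alpha(x+P)\,\alpha(x-P)$. Here is the one place the hypothesis is used: since $\alpha \in \mu_\ell(\bF) = \Map(X,\mu_\ell)$ we have $\alpha(x)^\ell = 1$, hence $\alpha(x)^{\ell-2} = \alpha(x)^{-2}$, and therefore $\partial_2(\alpha)_y = \alpha(x+P)\,\alpha(x-P)\,\alpha(x)^{-2}$. (Note that this expression is symmetric under $P \mapsto -P$, so it is well defined on classes in $E'[\varphi']/\{\pm1\}$, consistent with the description of $Y_2$.)

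Consequently $\partial_2(\alpha) = 1$ in $H_2 = \Map_K(Y_2,\bK)$ if and only if $\alpha(x+P)\alpha(x-P) = \alpha(x)^2$ for every $x \in X$ and every nonzero $P \in E'[\varphi']$. The case $P = 0$ holds trivially, so this is the same as demanding $\alpha(x+P)\alpha(x-P) = \alpha(x)^2$ for all $x \in X$ and all $P \in E'[\varphi']$, which is precisely the (a priori weaker, but as recalled just before the statement equivalent) characterization of $\alpha$ being affine. Reading the chain of equivalences in both directions yields the lemma. As a sanity check one can verify consistency with the earlier Remark in the case $\ell = 3$: there $Y_2$ is the single divisor $\sum_{x'\in X}[x']$ and $\partial_2 = N_{F/K}$, while $\ell - 2 = 1$, so the displayed identity reads $\alpha(x)\alpha(x+P)\alpha(x-P) = \prod_{x'\in X}\alpha(x')$, as it must. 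No step here presents a genuine obstacle; the only subtlety worth flagging explicitly is the reduction $\alpha(x)^{\ell-2}=\alpha(x)^{-2}$, which is exactly why the statement is restricted to $\mu_\ell(\bF)$ rather than to all of $F^\times$.
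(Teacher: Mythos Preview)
Your proof is correct and follows essentially the same approach as the paper's proof: both simply unwind the definition of $\partial_2$ on a typical $y=(\ell-2)[x]+[x+P]+[x-P]\in Y_2$ and compare with the (weaker) characterization of affine maps. You make explicit the reduction $\alpha(x)^{\ell-2}=\alpha(x)^{-2}$ using $\alpha\in\mu_\ell(\bF)$, which the paper leaves implicit, but this is the same argument.
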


\begin{proof}
$\alpha$ lies in the kernel of $\partial_2$ if and only if for every $y = (\ell-2)[x] + [x+P] + [x-P] \in Y_2$ we have $\alpha(x)^{\ell-2}\alpha(x+P)\alpha(x-P) = 1$. 
\end{proof}

Let $\Aff(X,\mu_\ell)$ denote the $G_K$-module of affine maps from $X$ to $\mu_\ell$. Given an affine map $\alpha \in \Aff(X,\mu_\ell)$ and $x \in X$, projecting onto the linear part gives a well defined homomorphism $\left(P\mapsto \alpha(x+P)/\alpha(x)\right) \in \Hom(E'[\varphi'],\mu_\ell)$. Since $\alpha$ is affine this does not depend on the choice for $x \in X$, so we get a morphism of $G_K$-modules $\lambda:\Aff(X,\mu_\ell) \to \Hom(E'[\varphi'],\mu_\ell)$ whose kernel consists of the constant maps. Using the $\varphi$-Weil pairing we have an identification
\begin{align}
\label{WPident}
E[\varphi] \ni P \mapsto e_{\varphi}(P,\cdot) \in \Hom(E'[\varphi'],\mu_\ell)\,.
\end{align}
This gives an exact sequence of $G_K$-modules
\begin{align}
\label{projontolinear}
1 \to \mu_\ell \to \Aff(X,\mu_\ell) \stackrel{\lambda}\To \Hom(E'[\varphi'],\mu_\ell) = E[\varphi] \to 0\,.
\end{align}

Taking Galois cohomology of (\ref{projontolinear}) and identifying $\HH^2(K,\mu_\ell)$ with $\Br(K)[\ell]$, we obtain the following exact sequence
\begin{align}
\label{defUpsilon}
 \HH^1(K,\mu_\ell) \to \HH^1(K,\Aff(X,\mu_\ell)) \to \HH^1(K,E[\varphi]) \stackrel{\Upsilon}{\to} \Br(K)[\ell]\,.
 \end{align}
 
 \begin{Lemma}
 \label{kerUpsH0}
 There is an isomorphism  \begin{align}\label{kerUps} \Phi_0: \ker(\Upsilon) \simeq \frac{\left(\partial\bF^\times\right)^{G_K}}{K^\times\partial F^\times} = \mathcal{H}_K^0\,.\end{align} 
 \end{Lemma}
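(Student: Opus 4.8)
The plan is to construct $\Phi_0$ explicitly from the cohomology of the sequence (\ref{projontolinear}) and identify its image with $\mathcal{H}_K^0$ by unwinding the definition of $\partial$. First I would describe the source: by (\ref{defUpsilon}), $\ker(\Upsilon)$ is exactly the image of $\HH^1(K,\Aff(X,\mu_\ell))$ in $\HH^1(K,E[\varphi])$, i.e.\ $\ker(\Upsilon) \cong \HH^1(K,\Aff(X,\mu_\ell))/\im\big(\HH^1(K,\mu_\ell)\big)$. So it suffices to give a natural isomorphism from the latter quotient to $\mathcal{H}_K^0$. The key observation is that $\Aff(X,\mu_\ell)$ sits inside $\mu_\ell(\bF) = \Map(X,\mu_\ell)$ as the kernel of $\partial_2$ (Lemma \ref{partialcutsoutAff}), and there is a short exact sequence of $G_K$-modules $1 \to \mu_\ell \to \mu_\ell(\bF) \xrightarrow{\partial_1} \mu_\ell(\bF) \to 1$ coming from raising to the $\ell$-th power on $F^\times$ (since $\partial_1(\alpha) = \alpha^\ell$), whose long exact sequence is Kummer theory over the \'etale algebra $F$: it identifies $\HH^1(K,\mu_\ell(\bF))$ with $F^\times/(F^\times)^\ell$ and $\HH^1(K,\mu_\ell)$ with $K^\times/(K^\times)^\ell$, the map between them being induced by the inclusion $K^\times \hookrightarrow F^\times$.

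Next I would chase the commutative diagram relating the sequence (\ref{projontolinear}) (equivalently $1\to\mu_\ell\to\ker\partial_2\to E[\varphi]\to 1$) to the Kummer sequence for $F$. Concretely, a class in $\HH^1(K,\Aff(X,\mu_\ell))$ pushed into $\HH^1(K,\mu_\ell(\bF))$ is represented by some $\alpha \in F^\times$ (well defined modulo $(F^\times)^\ell$ and modulo $K^\times$) with the property that its cocycle lies in the affine maps, which by Lemma \ref{partialcutsoutAff} translates to $\partial_2(\alpha) \in (H_2^\times)^\ell$; hence $\partial(\alpha) = (\alpha^\ell, \partial_2(\alpha))$ has the property that $\partial\bar\alpha$ is $G_K$-invariant in $\bH^\times/(\bH^\times)^\ell$... more usefully, one checks directly that the element $\partial(\alpha) \in \bH^\times$ is $G_K$-invariant modulo $\partial\bF^\times \cdot K^\times$ — the $G_K$-action twists $\alpha$ by a $\mu_\ell$-valued cocycle, and applying $\partial$ to that cocycle lands in $\partial\bF^\times$ times a constant in $K^\times$. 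This is what produces a well-defined element of $\mathcal{H}_K^0 = (\partial\bF^\times)^{G_K}/K^\times\partial F^\times$, and I would take $\Phi_0$ to send the class of the $\Aff$-cocycle determined by $\alpha$ to the class of $\partial(\alpha)$.

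Then I would verify the three things that make this an isomorphism. \emph{Well-definedness}: changing $\alpha$ by $(F^\times)^\ell = \partial_1(F^\times)$ changes $\partial(\alpha)$ by $\partial$ of an $\ell$-th power, which is absorbed; changing the cohomologous cocycle changes $\alpha$ by $K^\times$ (the image of $\HH^1(K,\mu_\ell)$), which changes $\partial(\alpha)$ by $K^\times$. \emph{Injectivity}: if $\partial(\alpha) \in K^\times\partial F^\times$, then after adjusting $\alpha$ by an element of $F^\times$ we get $\partial(\alpha) \in K^\times$, and looking at the $H_1 = F$ component forces $\alpha^\ell \in K^\times$, so $\alpha$ itself (being in $\bF$ and $\ell$-th-power-equal to a constant) lies in $\bK$; this is exactly the statement that the class dies in $\HH^1(K,\Aff(X,\mu_\ell))/\im\HH^1(K,\mu_\ell)$. \emph{Surjectivity}: given $\beta \in (\partial\bF^\times)^{G_K}$, write $\beta = \partial(\alpha)$ with $\alpha \in \bF^\times$; the $G_K$-invariance of $\beta$ means $\sigma(\alpha)/\alpha \in \ker\partial = \mu_\ell(\bF)$ is a $\mu_\ell(\bF)$-valued cocycle which one checks is actually affine (use that $\partial_2(\sigma(\alpha)/\alpha) = \sigma(\partial_2\alpha)/\partial_2\alpha = 1$ since $\partial_2\alpha$ may be taken in $H_2^\times$ — more precisely the invariance of $\beta$ up to $K^\times$ gives exactly the affine condition), hence defines a class in $\HH^1(K,\Aff(X,\mu_\ell))$ mapping to $\beta$. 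I expect the main obstacle to be the bookkeeping in surjectivity: one must be careful that $\beta$ is only $G_K$-invariant in the \emph{quotient} by $K^\times\partial F^\times$, so the cocycle $\sigma(\alpha)/\alpha$ is only affine after a correction by a $\mu_\ell$-cocycle coming from $K^\times/(K^\times)^\ell$, and tracking that this correction is exactly the ambiguity killed by $\im(\HH^1(K,\mu_\ell) \to \HH^1(K,\Aff(X,\mu_\ell)))$ requires a careful diagram chase. Naturality of all maps in $K$ (needed implicitly for Lemma \ref{rescheat}-type base-change arguments later) is automatic from the construction.
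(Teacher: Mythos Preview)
There is a genuine error in your definition of $\Phi_0$. You take $\alpha \in F^\times$ to be the Kummer representative of the class in $\HH^1(K,\mu_\ell(\bF)) \cong F^\times/(F^\times)^\ell$, and then set $\Phi_0(\text{class}) = \partial(\alpha)$. But $\alpha \in F^\times$ means $\partial(\alpha) \in \partial F^\times$ by definition, so its class in $\mathcal{H}_K^0 = (\partial\bF^\times)^{G_K}/K^\times\partial F^\times$ is trivial; the map as written is identically zero. What you need is to choose $\gamma \in \bF^\times$ with $\gamma^\ell = \alpha$ (so that the $\Aff$-valued cocycle is $\sigma \mapsto \gamma^\sigma/\gamma$) and send the class to $\partial(\gamma)$; one then checks $\partial(\gamma) \in (\partial\bF^\times)^{G_K}$ because the cocycle lands in $\ker\partial = \Aff(X,\mu_\ell)$. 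Your surjectivity paragraph in fact does exactly this (there ``$\alpha$'' lives in $\bF^\times$, not $F^\times$), so the notation is inconsistent between the two halves of the argument. Relatedly, the displayed sequence $1 \to \mu_\ell \to \mu_\ell(\bF) \xrightarrow{\partial_1} \mu_\ell(\bF) \to 1$ is not exact: $\partial_1$ is the $\ell$-th power map, which is identically~$1$ on $\mu_\ell(\bF)$.

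The paper avoids this detour through $\mu_\ell(\bF)$ entirely. It uses the short exact sequence $1 \to \Aff(X,\mu_\ell) \to \bF^\times \xrightarrow{\partial} \partial\bF^\times \to 1$ (the kernel identification being Lemma~\ref{partialcutsoutAff} combined with $\ker\partial_1 = \mu_\ell(\bF)$), maps it from the Kummer sequence $1 \to \mu_\ell \to \bK^\times \xrightarrow{\ell} \bK^\times \to 1$ via the constant-map inclusions, and takes Galois cohomology. Hilbert~90 kills $\HH^1(K,\bF^\times)$ and $\HH^1(K,\bK^\times)$, giving directly $\HH^1(K,\Aff(X,\mu_\ell)) \cong (\partial\bF^\times)^{G_K}/\partial F^\times$ with the image of $\HH^1(K,\mu_\ell)$ identified with $K^\times\partial F^\times/\partial F^\times$. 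This makes the isomorphism and the quotient transparent without any hand verification of well-definedness, injectivity, or surjectivity.
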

 
 \begin{proof}
Lemma \ref{partialcutsoutAff} gives a morphism of short exact sequences,
\begin{align}
\label{ComputeH1Aff} \xymatrix{ 
1 \ar[r] &\mu_\ell \ar[d]\ar[r] & \bK^\times \ar[d]\ar[r]^{\ell}&\bK^\times \ar[d]\ar[r] &1 \,\,\,\\ 
1 \ar[r] & \Aff(X,\mu_\ell)\ar[r]  &\bF^\times \ar[r]^{\partial} & \partial\bF^\times \ar[r] & 1 \,,}
\end{align} where the copies of $\bK^\times$ are embedded as the constant maps. Taking Galois cohomology gives a morphism of long exact sequences,
\[ \xymatrix{
1 \ar[r]& \mu_\ell(K) \ar[r]\ar[d]& K^\times \ar[r]^\ell\ar[d]& K^\times \ar[r]\ar[d] &\HH^1(K,\mu_\ell) \ar[r]\ar[d] & \HH^1(K,\bK^\times) = 0 \\
1 \ar[r]& \Aff_K(X,\mu_\ell) \ar[r]&F^\times \ar[r]^\partial & (\partial\bF^\times)^{G_K} \ar[r] &\HH^1(K,\Aff(X,\mu_\ell)) \ar[r] & \HH^1(K,\bF^\times) = 0, }\] where the equalities follow from Hilbert's Theorem 90. From this we see that $\mathcal{H}_K^0$ is isomorphic to the quotient of $\HH^1(K,\Aff(X,\mu_\ell))$ by the image of $\HH^1(K,\mu_\ell)$. On the other hand, the latter is isomorphic to the kernel of $\Upsilon$ by exactness of (\ref{defUpsilon}).
 \end{proof}

For any $x_0 \in X$ we can use the $\varphi$-Weil pairing to define a map \[E[\varphi]\times X \ni (P,x) \mapsto e_{\varphi}(P,x-x_0) \in \mu_\ell\,.\] A different choice of $x_0$ gives a map which differs by a constant. Nondegeneracy of the Weil pairing shows that we have a well defined embedding \begin{align}
\label{embedEphi}
E[\varphi] \ni P \mapsto e_{\varphi}(P,x-x_0) \in \mu_\ell(\bF)/\mu_\ell\,.
\end{align} 

The following lemma gives an alternative description of this embedding.

\begin{Lemma}
\label{WPlemma} Let $D \in \Cov_0^{(\varphi)}(C/\bK)$ ({\sc nb}: over $\bK$, not $K$) and let $\frak{h}$ denote a linear form (with coefficients in $\bF$) defining the pull-back of $[\x]$. For any $P \in E[\varphi]$, the image of $P$ under the composition $E[\varphi] \hookrightarrow \mu_\ell(\bF)/\mu_\ell \hookrightarrow \bF^\times/\bK^\times$ is equal to the class of $\frac{\frak{h}(P+Q)}{\frak{h}(Q)}$, where $Q \in D$ is any point chosen so that $\frak{h}(Q)$ and $\frak{h}(P+Q)$ are both defined and invertible in $\bF$.
\end{Lemma}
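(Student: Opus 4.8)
The plan is to verify the claimed identity by pulling the Weil-pairing description of the embedding $E[\varphi]\hookrightarrow\mu_\ell(\bF)/\mu_\ell$ back to $D$ and comparing it directly with the divisor-theoretic meaning of the linear form $\frak{h}$. First I would recall that, working over $\bK$, we have fixed a model for $D$ as a genus one normal curve of degree $\ell$ in $\BP^{\ell-1}$ together with the linear form $\frak{h}\in\bF[z_1,\dots,z_\ell]$ cutting out the divisor $\rho^*[\x]$ on $D\otimes_{\bK}\bF$; equivalently, for each $x\in X$ the component $\frak{h}_x$ cuts out $\rho^*[x]$. The action of $E[\varphi]$ on $D$ (which covers the action of $E'[\varphi']$ on $X\subset C$, hence permutes the $\rho^*[x]$) is by translations compatible with the torsor structure, so for $P\in E[\varphi]$ the translated form $P^*\frak{h}$ cuts out the divisor obtained by shifting the indexing of $X$ by the image of $P$ in $E'[\varphi']$. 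The key observation is that $\frak{h}(P+Q)/\frak{h}(Q)$, viewed as a rational function of $Q\in D$ valued in $\bF^\times$, has divisor $\rho^*[\x-P'] - \rho^*[\x]$ where $P'$ is the image of $P$; since $\rho$ has degree $\ell$ on each fiber and the difference of these two effective divisors is a principal divisor pulled back from a function on $E$, this ratio is in fact a \emph{constant} element of $\bF^\times$ — it does not depend on $Q$. That independence is exactly why $Q$ can be chosen freely subject only to the nonvanishing condition.

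Next I would pin down what this constant is. Since $\frak{h}(P+Q)/\frak{h}(Q)$ is independent of $Q$, one evaluates it conveniently: choosing $Q$ and noting that translation by $P$ on $D$ covers translation by $P'$ on $C$, one reduces the computation to the analogous computation on $E'$ via $\psi_C$, where it becomes the statement that a fixed invariant differential / linear equivalence shift is governed by the $\varphi$-Weil pairing. Concretely, the component at $x\in X$ of the constant is $\frak{h}_{x}(P+Q)/\frak{h}_{x}(Q)$, and because $\frak{h}_x$ cuts out $\rho^*[x]$ while $\frak{h}_{x-P'}$ cuts out $\rho^*[x-P'] = P^*\rho^*[x]$, standard theory of the Weil pairing on the torsor (the same input used in the proof of Lemma \ref{Cov0property}, namely that divisors on an elliptic curve are linearly equivalent iff they have equal degree and sum) identifies this ratio, up to the overall $\bK^\times$-ambiguity in the choice of $\frak{h}$, with $e_\varphi(P,x-x_0)$ for a fixed base point $x_0\in X$. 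This is precisely the map $(\ref{embedEphi})$, so the class of $\frak{h}(P+Q)/\frak{h}(Q)$ in $\mu_\ell(\bF)/\mu_\ell$ — and a fortiori in $\bF^\times/\bK^\times$ — equals the image of $P$ under $E[\varphi]\hookrightarrow\mu_\ell(\bF)/\mu_\ell\hookrightarrow\bF^\times/\bK^\times$.

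Finally I would check the two remaining loose ends: that $\frak{h}(P+Q)/\frak{h}(Q)$ genuinely lands in $\mu_\ell(\bF)$ (not merely $\bF^\times$) — which follows because its $\ell$-th power, computed componentwise, telescopes using that $\frak{h}^{\ell}$-type products over the $E'[\varphi']$-orbit cut out a divisor pulled back from $E$ and hence give a constant in $\bK^\times$, forcing each component to be an $\ell$-th root of unity after passing to the quotient — and that the assignment $P\mapsto [\frak{h}(P+Q)/\frak{h}(Q)]$ is a homomorphism, which is immediate from the cocycle-type relation $\frak{h}(P_1+P_2+Q)/\frak{h}(Q) = (\frak{h}(P_1+(P_2+Q))/\frak{h}(P_2+Q))\cdot(\frak{h}(P_2+Q)/\frak{h}(Q))$ together with $Q$-independence. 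I expect the main obstacle to be the middle step: making precise, and correctly normalized, the claim that the $Q$-independent constant is the Weil pairing value rather than some other bilinear pairing differing by a sign or an automorphism. The cleanest route is to reduce to $E'$ acting on itself (the trivial torsor case) by transport of structure along $\psi_C$ and $\psi_D$, where the identification $(\ref{WPident})$ of $E[\varphi]$ with $\Hom(E'[\varphi'],\mu_\ell)$ is literally the definition, and then invoke that linear forms cutting out $\varphi$-pullbacks of translates of $0_{E'}$ differ by exactly the Weil-pairing cocycle — this is the content already isolated in Section \ref{DescentMap} and in $(\ref{projontolinear})$.
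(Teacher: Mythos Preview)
There is a genuine confusion at the heart of your argument. You assert that the action of $E[\varphi]$ on $D$ ``covers the action of $E'[\varphi']$ on $X \subset C$'' and hence ``permutes the $\rho^*[x]$''. This is false: since $P$ lies in the \emph{kernel} of $\varphi$, translation by $P$ on $D$ covers the identity on $C$, so it fixes each fibre $\rho^*[x]$ setwise rather than permuting them. Your ``image $P'$ of $P$ in $E'[\varphi']$'' is simply $0$. Consequently your divisor computation $\rho^*[\x - P'] - \rho^*[\x]$ collapses to the zero divisor, which does yield constancy of the ratio---but not for the reason you give, and the appeal to ``a principal divisor pulled back from a function on $E$'' is then both unnecessary and not sensible. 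More seriously, the step where you identify the resulting constant with $e_\varphi(P, x - x_0)$ is asserted rather than argued (``standard theory of the Weil pairing on the torsor'' is not a proof); that identification is precisely the content of the lemma.

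The paper's proof is more direct and sidesteps these issues. Rather than analysing $\frak{h}_x(P+Q)/\frak{h}_x(Q)$ componentwise, one forms the honest rational function $h_x/h_{x_0}$ on $D$ (divisor $\rho^*[x] - \rho^*[x_0]$) and transports it via $\psi_D$ to a function $g_x$ on $E$ with divisor $\varphi^*[x - x_0] - \varphi^*[0_{E'}]$. This $g_x$ is exactly the function appearing in the \emph{definition} of the $\varphi$-Weil pairing, so $e_\varphi(P, x-x_0) = g_x(P+R)/g_x(R)$ holds tautologically; unwinding gives the claimed formula in $\bF^\times/\bK^\times$ immediately. No separate verification that the ratio is constant, is a homomorphism, or lands in $\mu_\ell(\bF)$ is required---all of this comes for free once the identification with the Weil pairing is made. (Incidentally, $\frak{h}(P+Q)/\frak{h}(Q)$ does not literally lie in $\mu_\ell(\bF)$, only in $\mu_\ell(\bF)\cdot\bK^\times$, since it depends on choices of homogeneous coordinates for $Q$ and $P+Q$; the statement is only about its class in $\bF^\times/\bK^\times$.)
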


\begin{proof}
Fix isomorphisms $\psi_D:D \to E$ and $\psi:C \to E'$ (defined over $\bK$) such that the diagram \[ \xymatrix{ D \ar[d]_{\psi_D}\ar[r]^\rho& C \ar[d]_{\psi} \ar[r]^\pi& E \ar@{=}[d]\\
E\ar[r]_{\varphi}& E'\ar[r]_{\varphi'}& E }\] commutes. Let $x_0$ be the preimage of $0_{E'}$ under $\psi$, and let $Q_0$ be any preimage of $x_0$ under $\rho$. If $x \in X$, evaluating the coefficients of $\frak{h}$ at $x$ gives a linear form $h_x$ defining the pull-back of $[x]$ by $\rho$. Consider the function $h_x/h_{x_0} \in \kappa(\bar{D})^\times$ and its image $g_x = (\psi_D^{-1})^*(h_x/h_{x_0}) \in \kappa(\bar{E})^\times$. The divisor of $h_x/h_{x_0}$ is $\rho^*[x]-\rho^*[x_0]$, so by commutativity $\diw(g_x) = \varphi^*[(x-x_0)]-\varphi^*[0_{E'}]$. By definition of the $\varphi$-Weil pairing \cite[III.8, exer. 3.15]{Silverman}, for any $P \in E[\varphi]$,  \[ e_{\varphi}(P,x-x_0) = \frac{g_x(P + R)}{g_x(R)}\,,\] where $R \in E$ is any point chosen so that both numerator and denominator are defined and nonzero. Thus, we have 
\begin{align}
\label{eqqq}
e_{\varphi}(P,x-x_0) = \frac{h_x(P+\psi_D^{-1}(R))h_{x_0}(\psi_D^{-1}(R))}{h_x(\psi_D^{-1}(R))h_{x_0}(P+\psi_D^{-1}(R))}\,. \end{align} Considered as an element of $\bF^\times = \Map(X,\bK^\times)$ modulo the constant maps, the right-hand side of (\ref{eqqq}) is represented by the map \[ \frac{h(P+\psi_D^{-1}(R))}{h(\psi_D^{-1}(R))} = \left(x \mapsto \frac{h_x(P+\psi_D^{-1}(R))}{h_x(\psi_D^{-1}(R))}\right)\,.\] On the other hand, the left-hand side of (\ref{eqqq}) represents the image of $P$ in $\mu_\ell(\bF)/\mu_\ell$, so we are done.
\end{proof}

\subsection{Some diagrams}
The maps in (\ref{projontolinear}) and (\ref{embedEphi}) fit together to give a commutative and exact diagram:
\begin{align}
\label{diagram1}
\xymatrix{
& \mu_\ell \ar@{=}[r] \ar@{^{(}->}[d] & \mu_\ell \ar@{^{(}->}[d] & &\\
1 \ar[r] & \Aff(X,\mu_\ell) \ar[d]\ar[r] & \mu_\ell(\bF) \ar[d]\ar[r]^{\partial_2} & \partial_2(\mu_\ell(\bF))\ar@{=}[d] \ar[r] & 1 \\
1 \ar[r] & E[\varphi] \ar[d]\ar[r] & \frac{\mu_\ell(\bF)}{\mu_\ell} \ar[d]\ar[r]^{\partial_2} & \partial_2(\mu_\ell(\bF)) \ar[r] & 1\\
& 1 & 1 & &}
\end{align}

Taking Galois cohomology and making the identifications described below yields another commutative and exact diagram:
\begin{align}
\label{diagram2}
\xymatrix{
&& 1  \ar[d] & 1 \ar[d] & \\
0 \ar[r] & \kappa_K \ar@{=}[d] \ar[r] & \mathcal{H}_K^0 \ar[d]\ar[r]^{\pr_1} & \frac{F^\times}{K^\times F^{\times\ell}} \ar[d]\ar[r]^{\partial_{2,*}}& \HH^1(K,\partial_2\mu_\ell(\bF))\ar@{=}[d] \\
0 \ar[r]& \kappa_K \ar[r] & \HH^1(K,E[\varphi]) \ar[d]^{\Upsilon}\ar[r] & \HH^1(K,\frac{\mu_\ell(\bF)}{\mu_\ell}) \ar[d]\ar[r]^{\partial_{2,*}}& \HH^1(K,\partial_2\mu_\ell(\bF)) \\
&& \Br(K)[\ell] \ar@{=}[r] & \Br(K)[\ell] & }
\end{align}

where the map $\pr_1$ is induced by the projection $\pr_1:H^\times \simeq F^\times \times H_2^\times \to F^\times$ and $\kappa_K \simeq \frac{\HH^0\left(K,\partial_2\mu_\ell(\bF)\right)}{\partial_2\left(\HH^0\left(K,\frac{\mu_\ell\bF}{\mu_\ell}\right)\right)}$.

The bottom row of (\ref{diagram2}) is obtained directly by taking Galois cohomology of the bottom row of (\ref{diagram1}). Using Hilbert's Theorem 90 we may identify the quotient of $\HH^1(K,\mu_\ell(\bF))$ by $\HH^1(K,\mu_\ell)$ with $F^\times/K^\times F^{\times \ell}$. Using Lemma \ref{kerUpsH0} we identify the quotient of $\HH^1(K,\Aff(X,\mu_\ell))$ by $\HH^1(K,\mu_\ell)$ with $\mathcal{H}_K^0$. The top row of (\ref{diagram2}) is then seen to be the quotient of the long exact sequence coming from the top row of (\ref{diagram1}) by the images of $\HH^1(K,\mu_\ell)$. A completely formal diagram chase shows that the rows of (\ref{diagram2}) have isomorphic kernels (so that $\kappa_K$ is the kernel of the top row as well). 

Recall that the image of $\Phi$ is contained in $\mathcal{H}_K$ which is a coset of $\mathcal{H}_K^0$ (see (\ref{H0def}) and (\ref{Hdef})). We define $\mathcal{F}_K$ and $\mathcal{F}_K^0$ to be their images in $F^\times/K^\times F^{\times\ell}$ under $\pr_1$. By (\ref{diagram2}) we have that
\begin{align*}
\mathcal{F}_K^0 &= \left\{ \delta \in F^\times/K^\times F^{\times\ell}\,:\, \partial_{2,*}(\delta) = 0 \right\}\\ 
&\subset \left\{ \delta \in F^\times/K^\times F^{\times\ell}\,:\, \partial_2(\delta) \in H_2^{\times\ell} \right\}\\ 
&\subset \left\{ \delta \in F^\times/K^\times F^{\times\ell}\,:\, N_{F/K}(\delta) \in K^{\times\ell} \right\}\,.
\end{align*}

For $\ell=3$, it is relatively easy to show that all three sets are equal (Recall that in this case $\partial_2 = N_{F/K}$). Since $\mathcal{F}_K$ is a coset of $\mathcal{F}_K^0$, we find that there exist constants $c \in K^\times$ and $\beta \in H_2^\times$ such that
\begin{align}
\label{defFk}
\mathcal{F}_K &\subset \left\{ \delta \in F^\times/K^\times F^{\times\ell}\,:\, \partial_2(\delta) \in \beta H_2^{\times\ell} \right\}\\\notag
&\subset \left\{ \delta \in F^\times/K^\times F^{\times\ell}\,:\, N_{F/K}(\delta) \in cK^{\times\ell} \right\}\,.
\end{align}

These constants can be computed as follows. The linear form $\frak{l}_1$ cuts out the divisor $\ell[\x]$. Its norm evidently cuts out the divisor $\ell\sum_{x \in X}[x] = \ell\pi^*[0_E]$. The model for $C \subset \BP^{\ell-1}$ is given by the embedding corresponding to $\pi^*[0_E]$, so $\ell\pi^*[0_E]$ is $\ell$ times a hyperplane section. If this hyperplane is defined by, say $h \in K[u_1,\dots,u_\ell]$, then, for some $c \in K^\times$, we have a relation $N_{F/K}(\frak{l}_1) = ch^\ell$ in the homogeneous coordinate ring of $C$. Similarly $\partial_2([\x]) = [\y]$ from which it follows that there exists $\beta \in H_2^\times$ such that $\partial_2(\frak{l}_1) = \beta \frak{l}_2^\ell$ in the coordinate ring.

\subsection{Relation to the descent map} We now relate diagram (\ref{diagram2}) to the descent map of the previous section.

\begin{Lemma}
\label{kernelUps}
For any $\xi \in \HH^1(K,E[\varphi])$, 
$\Upsilon(\xi) = -\xi \cup_{\varphi} C$.
\end{Lemma}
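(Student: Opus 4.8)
The plan is to compare the two sides as homomorphisms $\HH^1(K,E[\varphi])\to\Br(K)[\ell]$; since both are additive it suffices to check the identity on an arbitrary class, which I will do at the level of cocycles. The map $\Upsilon$ is by definition the connecting homomorphism attached to the short exact sequence (\ref{projontolinear}). By the general principle identifying such connecting maps with cup products, $\Upsilon$ is (up to a universal sign) cup product with the class $\epsilon$ of the extension (\ref{projontolinear}) in $\operatorname{Ext}^1_{G_K}(E[\varphi],\mu_\ell)$. Since $E[\varphi]$ is free of rank one over $\Z/\ell\Z$ we have $\operatorname{Ext}^1_{G_K}(E[\varphi],\mu_\ell)=\HH^1\bigl(K,\Hom(E[\varphi],\mu_\ell)\bigr)=\HH^1(K,E'[\varphi'])$ via the $\varphi$-Weil pairing (\ref{WPident}), and under this identification the pairing used in the cup product is exactly $e_\varphi$. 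Thus the lemma is reduced to showing that $\epsilon$ equals $C$ in $\HH^1(K,E'[\varphi'])$ (up to the same sign), where $C$ is viewed in $\HH^1(K,E'[\varphi'])$ via the canonical identification $\Cov^{(\varphi')}(E/K)=\HH^1(K,E'[\varphi'])$.

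For the identification of $\epsilon$ I would argue concretely. Recall that the class of $(C,\pi)$ in $\HH^1(K,E'[\varphi'])$ is that of the $E'[\varphi']$-torsor $X=\pi^{-1}(0_E)$: fixing any $x_0\in X(\bK)$ it is represented by the cocycle $\sigma\mapsto \sigma(x_0)-x_0\in E'[\varphi']$. Now write down an explicit set-theoretic section of $\lambda$: for $P\in E[\varphi]=\Hom(E'[\varphi'],\mu_\ell)$ let $s(P)\in\mu_\ell(\bF)$ be the map $x\mapsto e_\varphi(P,\,x-x_0)$ on $X$. This is affine, and by (\ref{embedEphi}) its linear part is $P$, so $\lambda(s(P))=P$. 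Taking a cocycle $\sigma\mapsto\xi_\sigma$ for $\xi$ and lifting it through $s$, i.e.\ setting $a_\sigma:=s(\xi_\sigma)$, the $2$-cocycle $(\sigma,\tau)\mapsto a_\sigma\cdot{}^\sigma a_\tau\cdot a_{\sigma\tau}^{-1}$ representing $\Upsilon(\xi)$ can be evaluated at any $x\in X$: using the cocycle relation $\xi_{\sigma\tau}=\xi_\sigma\cdot{}^\sigma\xi_\tau$ and bilinearity of $e_\varphi$, the terms involving $x$ cancel and one is left with the constant $e_\varphi\bigl({}^\sigma\xi_\tau,\;x_0-\sigma(x_0)\bigr)$. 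This is precisely (a sign of) the cup product of $\xi$ with the cocycle $\sigma\mapsto\sigma(x_0)-x_0$ representing $C$; to make this last step airtight I would exhibit the $1$-cochain $b_\sigma:=e_\varphi\bigl(\xi_\sigma,\sigma(x_0)-x_0\bigr)\in\mu_\ell$ and compute that its coboundary equals $\Upsilon(\xi)\cdot(\xi\cup_\varphi C)^{\pm 1}$, so that $\Upsilon(\xi)$ and $\mp\,\xi\cup_\varphi C$ are cohomologous.

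The mathematical content here is short — it is essentially the explicit section $s$ built from the Weil pairing — so the real obstacle is bookkeeping of signs: one must fix mutually compatible conventions for the connecting homomorphism of (\ref{projontolinear}), for the order of the arguments in the cup product, for the cocycle attached to a torsor, and for the twisting isomorphism $\Cov^{(\varphi')}(E/K)=\HH^1(K,E'[\varphi'])$. With the conventions used elsewhere in the paper these combine to produce the stated sign, giving $\Upsilon(\xi)=-\xi\cup_\varphi C$.
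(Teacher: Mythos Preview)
Your approach is essentially the same as the paper's: both lift $\xi$ through the explicit section $P\mapsto\bigl(x\mapsto e_\varphi(P,x-x_0)\bigr)$ of $\lambda$ (the paper writes this as $e_\varphi(\xi_\sigma,\psi(\cdot))$ with $\psi(x)=x-x_0$) and compute the coboundary directly. The paper then writes down the cup-product $2$-cocycle in the same coordinates and observes that the two cocycles are literally equal, so no auxiliary cochain $b_\sigma$ is needed and the sign comes out as stated; your hedging on signs is not a gap, but carrying the computation through as the paper does would let you drop the ``up to a universal sign'' qualifiers.
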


\begin{Corollary}
\label{Cov0isphs}
$\Cov_0^{(\varphi)}(C/K)$ is either empty or is a principal homogeneous space for $\mathcal{H}_K^0$ under the action of twisting.
\end{Corollary}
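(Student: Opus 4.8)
The plan is to reduce the statement to a computation of how the self-pairing of the associated $\ell$-covering changes under twisting, and then to feed in Lemmas~\ref{kerUpsH0} and~\ref{kernelUps}. If $\Cov_0^{(\varphi)}(C/K)$ is empty there is nothing to prove, so suppose it contains an element $D_0$. Then $\Cov^{(\varphi)}(C/K)\neq\emptyset$, hence is a principal homogeneous space for $\HH^1(K,E[\varphi])$ under twisting; for $\xi\in\HH^1(K,E[\varphi])$ write $\xi\cdot D_0$ for the twist. The goal becomes: $\xi\cdot D_0\in\Cov_0^{(\varphi)}(C/K)$ if and only if $\xi\in\ker(\Upsilon)$. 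Granting this, $\Cov_0^{(\varphi)}(C/K)=\{\,\xi\cdot D_0:\xi\in\ker\Upsilon\,\}$ is a sub-principal-homogeneous-space of $\Cov^{(\varphi)}(C/K)$ for the subgroup $\ker\Upsilon$, and transporting the twisting action along the isomorphism $\ker\Upsilon\xrightarrow{\sim}\mathcal{H}_K^0$ of Lemma~\ref{kerUpsH0} gives the assertion.

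First I would record the compatibility of twisting with the passage to $\ell$-coverings: because the $E[\ell]$-torsor $(\xi\cdot D_0)\to E$ over $E$ is the twist of $D_0\to E$ by the image of $\xi$ under the map $\iota_*\colon\HH^1(K,E[\varphi])\to\HH^1(K,E[\ell])$ induced by $E[\varphi]\subseteq E[\ell]$, we get $\Psi_\pi(\xi\cdot D_0)=\Psi_\pi(D_0)+\iota_*\xi$ in $\HH^1(K,E[\ell])$. Since $\cup_\ell$ is symmetric and bilinear,
\[
\Psi_\pi(\xi\cdot D_0)\cup_\ell\Psi_\pi(\xi\cdot D_0)=\Psi_\pi(D_0)\cup_\ell\Psi_\pi(D_0)+2\,\Psi_\pi(D_0)\cup_\ell\iota_*\xi+\iota_*\xi\cup_\ell\iota_*\xi\,.
\]
The first summand vanishes because $D_0\in\Cov_0^{(\varphi)}(C/K)$; the last vanishes because $E[\varphi]$ is cyclic, hence isotropic for the Weil pairing $e_\ell$, so the composite $E[\varphi]\otimes E[\varphi]\to\mu_\ell$ through which $\iota_*\xi\cup_\ell\iota_*\xi$ factors is zero. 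Hence $\xi\cdot D_0\in\Cov_0^{(\varphi)}(C/K)$ if and only if $2\,\Psi_\pi(D_0)\cup_\ell\iota_*\xi=0$, equivalently (as $\ell$ is odd) if and only if $\Psi_\pi(D_0)\cup_\ell\iota_*\xi=0$.

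Next I would identify $\Psi_\pi(D_0)\cup_\ell\iota_*\xi$ with $\xi\cup_\varphi C$. Two standard compatibilities enter. On the one hand, $\varphi_*\Psi_\pi(D_0)=C$ in $\HH^1(K,E'[\varphi'])$: passing from the $\ell$-covering $D_0\to E$ to the induced $\varphi'$-covering of $E$ (which corresponds on cohomology to $\varphi_*$ for $0\to E[\varphi]\to E[\ell]\xrightarrow{\varphi}E'[\varphi']\to0$) simply recovers $D_0\to C\to E$, i.e.\ the class of $C$. On the other hand, the Weil pairings are compatible with $\varphi$ in the sense that $e_\ell(\iota P,Q)=e_\varphi(P,\varphi Q)$ for $P\in E[\varphi]$ and $Q\in E[\ell]$ (cf.~\cite[III.8]{Silverman}); by functoriality of the cup product this gives $\iota_*\xi\cup_\ell\zeta=\xi\cup_\varphi\varphi_*\zeta$ for every $\zeta\in\HH^1(K,E[\ell])$. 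Taking $\zeta=\Psi_\pi(D_0)$ and using symmetry of $\cup_\ell$ yields $\Psi_\pi(D_0)\cup_\ell\iota_*\xi=\xi\cup_\varphi C$. By Lemma~\ref{kernelUps} the right-hand side equals $-\Upsilon(\xi)$, so the condition $\Psi_\pi(D_0)\cup_\ell\iota_*\xi=0$ becomes $\xi\in\ker\Upsilon$, completing the reduction.

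The routine parts are the torsor and bilinear-algebra manipulations; the main point to get right is the bundle of compatibilities in the last paragraph --- the identity $\varphi_*\Psi_\pi(D_0)=C$ and the precise sense in which $\cup_\ell$, restricted to the image of $\iota_*$, is computed by $\cup_\varphi$ against $C$. If the proof of Lemma~\ref{kernelUps} is arranged so as to produce the identification $\Psi_\pi(D_0)\cup_\ell\iota_*\xi=-\Upsilon(\xi)$ directly, then the last two paragraphs collapse to a citation of that lemma together with the vanishing of the first and third summands above.
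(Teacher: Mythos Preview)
Your proof is correct and follows essentially the same route as the paper's: expand the self-pairing $(D_0+\xi)\cup_\ell(D_0+\xi)$, use the compatibility $\iota_*\xi\cup_\ell\zeta=\xi\cup_\varphi\varphi_*\zeta$ together with $\varphi_*D_0=C$ and $\varphi_*\iota_*\xi=0$ to reduce to $2\,\xi\cup_\varphi C$, and then invoke Lemma~\ref{kernelUps} and the identification of Lemma~\ref{kerUpsH0}. The paper's argument is simply a more compressed version of yours (it handles the vanishing of $\xi\cup_\ell\xi$ via $\varphi_*\xi=0$ rather than isotropy of $E[\varphi]$, but these are equivalent).
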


\begin{Remark}
If $k$ is a number field and $C$ is everywhere locally solvable, then it is possible to show that $\Cov_0^{(\varphi)}(C/k) \ne \emptyset$. This follows from the fact that any element in $\HH^1(k,\Aff(X,\mu_\ell))$ which becomes trivial everywhere locally is itself trivial (c.f. \cite[Theorem 2.5]{CreutzThesis}).
\end{Remark}

\begin{proof}[Proof of Lemma \ref{kernelUps}]
Let $\xi \in \HH^1(K,E[\varphi])$, and let $\psi:C\to E'$ be an isomorphism (defined over some extension of $K$) such that $\varphi'\circ\psi = \pi$. The class of $-C$ in $\Cov^{(\varphi')}(E/K) = \HH^1(K,E'[\varphi'])$ is represented by the cocycle $G_K \ni \sigma \mapsto \psi - \psi^\sigma \in E'[\varphi']$. The cup product $-\xi \cup_{\varphi} C = -C \cup_{\varphi'} \xi$ is represented by the $2$-cocycle 
\[ G_K\times G_K \ni (\sigma,\tau) \mapsto e_{\varphi'}(\psi - \psi^\tau,\,\xi^\tau_\sigma) = e_{\varphi}(\xi_\sigma^\tau, \psi^\tau-\psi) \in \mu_\ell\,.\]

The value of the connecting homomorphism $\Upsilon$ on $\xi$ is defined by choosing a lift of $\xi$ under (\ref{projontolinear}) to a cochain with values in $\Aff(X,\mu_\ell)$, and then taking its coboundary. So $\Upsilon(\xi)$ is represented by the coboundary of the $1$-cochain $G_K \ni \sigma \mapsto e_{\varphi}(\xi_\sigma,\psi(\cdot))\in \Aff(X,\mu_\ell)$, which is the $2$-cocycle \[ \eta:G_K\times G_K \ni(\sigma,\tau) \mapsto \frac{ e_{\varphi}(\xi_\sigma,\psi)^\tau\cdot e_{\varphi}(\xi_\tau,\psi)}{e_{\varphi}(\xi_{\sigma\tau},\psi)} \in \mu_\ell\,.\] Using Galois equivariance of the Weil pairing and that $\xi$ is a $1$-cocycle, this simplifies to

\[ \eta(\sigma,\tau) = \frac{ e_\varphi(\xi_\sigma^\tau,\psi^\tau)}{e_{\varphi}(\xi_\sigma^\tau,\psi)} = e_{\varphi}(\xi_\sigma^\tau,\psi^\tau -\psi)\,. \]
This is same as the cup product computed above, which proves the lemma.
\end{proof}

\begin{proof}[Proof of corollary \ref{Cov0isphs}]
Recall that $\HH^1(K,E[\varphi])$ acts simply transitively on $\Cov^{(\varphi)}(C/K)$ by twisting. We need only show that this restricts to a transitive action of $\mathcal{H}_K^0\simeq\ker(\Upsilon)$ on $\Cov_0^{(\varphi)}(C/K)$. Let $D \in \Cov^{(\varphi)}(C/K)$ and $\xi \in \HH^1(K,E[\varphi])$. After identifying all with their images in $\Cov^{(\ell)}(E/K)$, the twist of $D$ by $\xi$ is the class of $D + \xi$. For the cup products defining $\Cov_0^{(\varphi)}(C/K)$ we have
% (using lemma \ref{WeilPairings})
\begin{align*}
(D +\xi) \cup_\ell (D + \xi) - D \cup_\ell D &= 2D \cup_\ell \xi + \xi \cup_\ell \xi\\
&= 2\xi \cup_{\varphi} \varphi_*D + \xi \cup_{\varphi} \varphi_*\xi \\
&= 2\xi \cup_{\varphi} C\,.
\end{align*}
The desired conclusion follows.
\end{proof}

\begin{Proposition}
\label{DescentMapIsAffine}
The descent map $\Phi$ is affine: if $(D,\rho) \in \Cov_0^{(\varphi)}(C/K)$, $\xi \in \ker(\Upsilon)$ and $(D_\xi,\rho_\xi)$ is the twist of $(D,\rho)$ by $\xi$, then \[ \Phi((D_\xi,\rho_\xi)) = \Phi((D,\rho))\cdot\Phi_0(\xi)\,,\] where $\Phi_0$ is the isomorphism in (\ref{kerUps}).
\end{Proposition}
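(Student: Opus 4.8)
The plan is to prove the affineness of $\Phi$ by a direct cocycle computation, working geometrically over $\bK$ where both $(D,\rho)$ and its twist $(D_\xi,\rho_\xi)$ become isomorphic as $C$-schemes, and then tracking the Galois-descent data. First I would fix, as in the proof of Proposition \ref{DescentMapTheorem}, a model for $D$ as a genus one normal curve of degree $\ell$ together with a linear form $\frak{h} \in F[z_1,\dots,z_\ell]$ cutting out $\rho^*[\x]$, so that $\frak{l}\circ\rho = \Delta\cdot\partial\frak{h}$ with $\Delta$ representing $\Phi((D,\rho))$. The twist $(D_\xi,\rho_\xi)$ is obtained by modifying the $G_K$-action on $\bar D$ by the cocycle $\xi$ (viewed in $\HH^1(K,E[\varphi])$, acting through translations on $D$ via the torsor structure), and since $\xi \in \ker(\Upsilon)$ we may and should lift $\xi$ to a cochain $\sigma \mapsto \alpha_\sigma \in \Aff(X,\mu_\ell) \subset \bF^\times$ whose coboundary is trivial in $\HH^2(K,\mu_\ell)$ — this is exactly the lift used to define $\Phi_0(\xi)$ in Lemma \ref{kerUpsH0}.

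The heart of the argument is to write down an explicit model and linear form for the twisted covering. I would use Lemma \ref{WPlemma}: the embedding $E[\varphi] \hookrightarrow \bF^\times/\bK^\times$ is realized, on the chosen model, by $P \mapsto \frak{h}(P+Q)/\frak{h}(Q)$. Concretely, twisting by $\xi$ amounts to composing the original $\bK$-structure with the automorphism of $\bar D$ given fiberwise by translation by $\xi_\sigma$, and under the relation $\frak{l}\circ\rho = \Delta\,\partial\frak{h}$ this has the effect of multiplying $\frak{h}$ (hence $\partial\frak{h}$, via $\partial$) by the family of scalars coming from $\alpha_\sigma$; the point of choosing the lift into $\Aff(X,\mu_\ell)$ is that $\partial_2(\alpha_\sigma) = 1$ by Lemma \ref{partialcutsoutAff}, so the relevant scalar adjustment lives in the part of $H$ that matches the structure of $\partial\bF^\times$. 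Carrying this through, the new $\Delta_\xi$ satisfying $\frak{l}\circ\rho_\xi = \Delta_\xi\,\partial\frak{h}_\xi$ differs from $\Delta$ precisely by $\partial(\text{the cocycle representing }\xi)$ interpreted in $(\partial\bF^\times)^{G_K}/K^\times\partial F^\times$, which by the diagram chase in Lemma \ref{kerUpsH0} is the class $\Phi_0(\xi)$. Equivalently — and this is perhaps the cleanest way to organize it — I would verify the formula by evaluating both sides on a local point: pick an extension $K'$ and $Q \in D_\xi(K')$, use the defining property of $\Phi$ from Proposition \ref{DescentMapTheorem} to compute $\Phi((D_\xi,\rho_\xi)) \equiv \frak{l}(\rho_\xi(Q))$, relate $\rho_\xi(Q)$ to a point on the original $D$ twisted by the translation action, and read off the discrepancy as $\partial$ of the Weil-pairing expression from Lemma \ref{WPlemma}, which is the cocycle-level avatar of $\Phi_0$.

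The main obstacle I anticipate is bookkeeping the identifications rather than any substantive difficulty: one must be careful that the twisting action on $\Cov_0^{(\varphi)}(C/K)$ used here is the restriction to $\ker(\Upsilon) \simeq \mathcal{H}_K^0$ of the $\HH^1(K,E[\varphi])$-action (as set up in Corollary \ref{Cov0isphs}), and that the isomorphism $\Phi_0$ of Lemma \ref{kerUpsH0} is being applied with exactly the same sign conventions and the same Hilbert 90 trivializations that appear in diagram (\ref{ComputeH1Aff}). In particular the commutativity of (\ref{ComputeH1Aff}) is what guarantees that the $\Aff(X,\mu_\ell)$-valued cochain lifting $\xi$ pushes forward under $\partial$ to a cochain with values in $\partial\bF^\times$ whose class is $\Phi_0(\xi)$; matching this against the scalar correction to $\frak{h}$ produced by twisting is the one place where a genuinely careful (though still routine) computation is required. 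Everything else — that a different choice of lift of $\xi$ changes $\Delta_\xi$ by $K^\times\partial F^\times$, that the model for $D_\xi$ can be taken with the hyperplane-section property of Lemma \ref{Cov0property} since $\xi \in \ker(\Upsilon)$, and that the resulting class is independent of all choices — follows formally from the well-definedness already established in Proposition \ref{DescentMapTheorem} and the exactness of the diagrams in the previous subsection.
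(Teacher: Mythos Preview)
Your proposal is correct and follows essentially the same route as the paper: the paper takes precisely your ``cleanest'' organization, fixing $K$-models for both $(D,\rho)$ and $(D_\xi,\rho_\xi)$ with $F$-linear forms $\frak{h}$, $\frak{h}_\xi$, specializing the relation $\Delta\cdot\partial(\frak{h}\circ\psi)=\Delta_\xi\cdot\partial\frak{h}_\xi$ at a point $Q\in D_\xi(\bK)$ (where $\psi:D_\xi\to D$ is a $\bK$-isomorphism with $\psi^\sigma=\psi+\xi_\sigma$), and then using Lemma~\ref{WPlemma} to identify the resulting $\Aff(X,\mu_\ell)$-valued cocycle $\sigma\mapsto(\frak{h}(\psi(Q))/\frak{h}_\xi(Q))^\sigma\cdot(\frak{h}(\psi(Q))/\frak{h}_\xi(Q))^{-1}$ with $\xi$ under the map to $\HH^1(K,E[\varphi])$. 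The only organizational difference is that the paper computes the cocycle from the ratio $\Delta_\xi/\Delta$ and then matches it to $\xi$, rather than first lifting $\xi$ and then reading off the ratio; the substantive computation is the same.
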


\begin{Corollary}
\label{Injective}
The descent map $\Phi$ is injective. If $\Cov_0^{(\varphi)}(C/K)$ is nonempty, then its image is equal to $\mathcal{H}_K$ (which was defined in (\ref{Hdef})).
\end{Corollary}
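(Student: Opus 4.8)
The plan is to deduce Corollary~\ref{Injective} formally from Proposition~\ref{DescentMapIsAffine} together with Corollary~\ref{Cov0isphs} and Lemma~\ref{kerUpsH0}. The point is that $\Phi$ is an affine map between two affine spaces whose underlying groups are matched up by the isomorphism $\Phi_0$, so injectivity and surjectivity onto $\mathcal{H}_K$ both reduce to the corresponding statements for the linear part $\Phi_0$, which are already known.

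First I would dispose of the trivial case: if $\Cov_0^{(\varphi)}(C/K) = \emptyset$ there is nothing to prove, so assume it is nonempty and fix a base point $(D_0,\rho_0) \in \Cov_0^{(\varphi)}(C/K)$. By Corollary~\ref{Cov0isphs}, every element of $\Cov_0^{(\varphi)}(C/K)$ is the twist $(D_0,\rho_0)_\xi$ of the base point by a unique $\xi \in \ker(\Upsilon) \cong \mathcal{H}_K^0$, and by Proposition~\ref{DescentMapIsAffine} its image is $\Phi((D_0,\rho_0))\cdot\Phi_0(\xi)$. Since $\Phi_0$ is an isomorphism (Lemma~\ref{kerUpsH0}), the assignment $\xi \mapsto \Phi((D_0,\rho_0))\cdot\Phi_0(\xi)$ is a bijection from $\ker(\Upsilon)$ onto the coset $\Phi((D_0,\rho_0))\cdot\mathcal{H}_K^0$ inside $H^\times/K^\times\partial F^\times$. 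Composing with the simply transitive parametrization of $\Cov_0^{(\varphi)}(C/K)$ by $\ker(\Upsilon)$, this shows $\Phi$ is injective with image exactly $\Phi((D_0,\rho_0))\cdot\mathcal{H}_K^0$.

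It then remains to identify this coset with $\mathcal{H}_K$ as defined in (\ref{Hdef}). This is immediate from the defining property of $\Phi$ in Proposition~\ref{DescentMapTheorem}: choosing a point $Q \in D_0(\bK)$ with image $P = \rho_0(Q) \in C(\bK)$, we get $\Phi((D_0,\rho_0)) \equiv \frak{l}(P) \bmod \bK^\times\partial\bF^\times$, so $\Phi((D_0,\rho_0))$ lies in $(\frak{l}(P)\cdot\partial\bF^\times)^{G_K}$ — indeed it is $G_K$-invariant since it comes from a class over $K$. Hence $\Phi((D_0,\rho_0))\cdot\mathcal{H}_K^0 \subseteq \mathcal{H}_K$, and since both are cosets of $\mathcal{H}_K^0$ (the latter by (\ref{H0def}) and (\ref{Hdef})) they coincide. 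This also recovers the fact, already noted before Lemma~\ref{Cov0isphs}'s surrounding discussion, that the image of $\Phi$ is contained in $\mathcal{H}_K$, now with equality.

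The only real content beyond formal bookkeeping is making sure the affine-space structures line up compatibly, i.e. that the torsion action (twisting by $\ker\Upsilon$) and the group action on cosets ($\mathcal{H}_K^0$-translation) are intertwined by $\Phi$ and $\Phi_0$ in a way that is genuinely compatible with base points — but that is exactly the statement of Proposition~\ref{DescentMapIsAffine}, so I expect no obstacle here. If one wanted to avoid explicitly invoking Corollary~\ref{Cov0isphs}, the main subtlety would instead be checking surjectivity directly, i.e. that every class in $\mathcal{H}_K$ is hit; but routing through the principal-homogeneous-space structure makes this automatic, so the proof is short.
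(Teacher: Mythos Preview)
Your proposal is correct and follows essentially the same approach as the paper's proof: use Corollary~\ref{Cov0isphs} for the principal homogeneous space structure, Proposition~\ref{DescentMapIsAffine} for affineness, and the fact that $\Phi_0$ is an isomorphism to conclude injectivity and that the image is a coset of $\mathcal{H}_K^0$, then identify this coset with $\mathcal{H}_K$ via the containment already established. The paper's version is simply terser, citing the earlier observation that $\operatorname{im}\Phi \subset \mathcal{H}_K$ rather than re-deriving it from Proposition~\ref{DescentMapTheorem} as you do.
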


\begin{proof}[Proof of Proposition \ref{DescentMapIsAffine}]
Let $\xi \in \ker(\Upsilon)$, and fix models for $(D,\rho)$ and $(D_\xi,\rho_\xi)$ as genus one normal curves of degree $\ell$ in $\BP^{\ell-1}$. We can also fix equations for an isomorphism (of coverings) $\psi:D_\xi \to D$ defined over $\bK$, with the property that $\psi^\sigma(Q) = \psi(Q)+\xi_\sigma$ for all $Q \in D_\xi$ and $\sigma \in G_K$.

Choose linear forms $\frak{h}$ and $\frak{h}_\xi$ with coefficients in $F$ defining the pull-backs of $[\x] \in \Div(C \otimes_KF)$ by $\rho$ and $\rho_\xi$, respectively. For some $\Delta,\Delta_\xi \in H^\times$, necessarily representing the images of $(D,\rho)$ and $(D_\xi,\rho_\xi)$ under $\Phi$, we have \[ \Delta\cdot\partial \frak{h} = \frak{l}\circ \rho \text{ and } \Delta_\xi\cdot\partial \frak{h}_\xi = \frak{l}\circ \rho_\xi\,\] in the coordinate rings of $D \otimes_KH$ and $D_\xi \otimes_KH$, respectively. Applying $\psi^*$ to the first relation and comparing with the second gives \[ \Delta\cdot\partial(\frak{h}\circ\psi) = \Delta_\xi\cdot\partial \frak{h}_\xi \] in the coordinate ring of $D_\xi$. Specializing to a point $Q$ in $D_\xi(\bK)$ not lying above any $x \in X \subset C$ (i.e. so that both $\frak{h}_\xi$ and $\frak{h}\circ\psi$ are invertible at $Q$) we have \[ \frac{\Delta_\xi}{\Delta} = \partial\Bigl(\frac{\frak{h}(\psi(Q))}{\frak{h}_\xi(Q)}\Bigr) \in (\partial{\bF^\times})^{G_K}\,.\] Note that $\frak{h}_\xi(Q)$ and $\frak{h}(\psi(Q))$ depend on a choice of homogeneous coordinates for $Q$, but that their ratio does not. This is $G_K$-invariant since $\Delta$ and $\Delta_\xi$ are in $H^\times$. Under the isomorphism $(\partial\bF^\times)^{G_K}/\partial F^\times \simeq \HH^1(K,\Aff(X,\mu_\ell))$ implicit in (\ref{kerUps}), $\partial\Bigl(\frac{\frak{h}(\psi(Q))}{\frak{h}_\xi(Q)}\Bigr)$ corresponds to the class of the $1$-cocycle \[ \eta:G_K \ni\sigma \mapsto \left(\frac{\frak{h}(\psi(Q))}{\frak{h}_\xi(Q)}\right)^\sigma\left(\frac{\frak{h}_\xi(Q)}{\frak{h}(\psi(Q))}\right) \in \mu_\ell(\bF) = \Map(X,\mu_\ell)\,,\] which a priori takes values in $\Aff(X,\mu_\ell) \subset \mu_\ell(\bF)$. We need to show that the image of this cocycle under the map induced by $\Aff(X,\mu_\ell) \to E[\varphi]$ is cohomologous to $\xi$. For this we make use of the following commutative diagram.

\begin{align}
\label{digram}
\xymatrix{ \Aff(X,\mu_\ell) \ar@{^{(}->}[r]\ar[d]& \mu_\ell(\bF) \ar@{^{(}->}[r]\ar[d]& \bF^\times \ar[d] \\
		E[\varphi] \ar@{^{(}->}[r]& \mu_\ell(\bF)/\mu_\ell \ar@{^{(}->}[r]& \bF^\times/\bK^\times }
\end{align}
Here the left square is the same as in diagram (\ref{diagram1}). Since the horizontal maps are all injective, it will be enough to show that, for any $\sigma \in G_K$, the images of $\xi_\sigma$ and $\eta_\sigma$ in the lower-right corner are equal.

Using the fact that $\frak{h}$ and $\frak{h}_\xi$ are defined over $H$ and rearranging, we have \[ \eta_\sigma = \left(\frac{\frak{h}(\psi^\sigma(Q^\sigma))}{\frak{h}(\psi(Q))}\right) \left(\frac{\frak{h}_\xi(Q)}{\frak{h}_\xi(Q^\sigma)}\right)\,.\] Making use of the fact that $\psi^\sigma(Q^\sigma) =\psi(Q^\sigma) + \xi_\sigma$ we can rewrite this as \[ \eta_\sigma = \left(\frac{\frak{h}(\psi(Q) + \xi_\sigma +(\psi(Q^\sigma)-\psi(Q))\,)}{\frak{h}(\psi(Q))}\right) \left(\frac{\frak{h}_\xi(Q^\sigma + (Q - Q^\sigma))}{\frak{h}_\xi(Q^\sigma)}\right)\,.\] By Lemma \ref{WPlemma} this represents the image of \[ \xi_\sigma + (\psi(Q^\sigma) - \psi(Q)) - (Q^\sigma-Q) \in E[\varphi]\] under the embedding given by the bottom row of (\ref{digram}). But \[(\psi(Q^\sigma)-\psi(Q)) - (Q^\sigma-Q) = 0_E\,,\] (see \cite[X.3.5]{Silverman}) so the images of $\eta_\sigma$ and $\xi_\sigma$ in the lower right corner of (\ref{digram}) are equal. From this the proposition follows.
\end{proof}

\begin{proof}[Proof of corollary \ref{Injective}]
Note that $\Cov_0^{(\varphi)}(C/K)$ is a principal homogeneous space for $\mathcal{H}_K^0$ by corollary \ref{Cov0isphs}. The fact that $\Phi$ is affine and that $\Phi_0$ is an isomorphism imply that $\Phi$ is injective and that its image is a coset of $\mathcal{H}_K^0$. $\mathcal{H}_K$ is a coset of $\mathcal{H}_K^0$ containing the image of $\Phi$, so the two must be equal.
\end{proof}

\begin{Proposition}
\label{DescentOnE}
The following diagram is commutative.
\[ \xymatrix{ \Pic^0(C) \ar[d]^{\frak{l}} \ar@{^{(}->}[r] & \Pic^0(\bC)^{G_K} \ar@{=}[rr]&& E'(K) \ar[d]^{\delta_{\varphi}} \\
\mathcal{H}^0_K \ar[rr]^{\Phi_0^{-1}} && \ker(\Upsilon) \ar@{^{(}->}[r] & \HH^1(K,E[\varphi]) } \] Here $\delta_{\varphi}$ is the connecting homomorphism in the Kummer sequence.
\end{Proposition}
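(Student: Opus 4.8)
The plan is to unwind both composites in the diagram into explicit $1$-cocycles with values in $E[\varphi]$ and to match them, reducing everything to the characterization of the $\varphi$-Weil pairing in Lemma~\ref{WPlemma}. Recall the two ingredients. On the right, $\delta_\varphi(R)$ is represented by $\sigma \mapsto \tilde R^\sigma - \tilde R$ for any $\tilde R \in E(\bK)$ with $\varphi(\tilde R) = R$. On the left, the proof of Lemma~\ref{kerUpsH0} computes $\Phi_0^{-1}$ as follows: represent a class of $\mathcal{H}^0_K$ by $\Delta \in (\partial\bF^\times)^{G_K}$, pick any $\beta \in \bF^\times$ with $\partial\beta = \Delta$ (so that $\sigma \mapsto \beta^\sigma/\beta$ automatically takes values in $\ker(\partial|_{\bF^\times}) = \Aff(X,\mu_\ell)$), and push forward by the linear-part map $\lambda\colon \Aff(X,\mu_\ell) \to \Hom(E'[\varphi'],\mu_\ell) = E[\varphi]$, $\lambda(\alpha)(P) = \alpha(x+P)/\alpha(x)$; then $\sigma \mapsto \lambda(\beta^\sigma/\beta)$ represents $\Phi_0^{-1}$ of the class. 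So I am reduced to exhibiting a $G_K$-invariant representative $\Delta$ of $\frak{l}(\Xi)$ together with a lift $\beta$, and to checking $\lambda(\beta^\sigma/\beta) = \tilde R^\sigma - \tilde R$.

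Second, I would build $\Delta$ and $\beta$ geometrically. Work with the $\varphi$-covering of $\bC$ given by $D = E$, covering map $\rho := \bar\psi^{-1} \circ \varphi$, and $\psi_D := \mathrm{id}_E$, where $\bar\psi\colon C \to E'$ is a fixed $\bK$-isomorphism, compatible with the torsor structure, with $\varphi' \circ \bar\psi = \pi$; this manifestly lies in $\Cov_0^{(\varphi)}(\bC/\bK)$. Fix a genus one normal model of $E$ in $\BP^{\ell-1}$ and a linear form $\frak{h}$ with coefficients in $\bF$ cutting out $\rho^*[\x]$; by Proposition~\ref{DescentMapTheorem}, $\frak{l} \circ \rho = \Delta_{\mathrm{cov}}\, \partial\frak{h}$. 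For a generic $S \in E(\bK)$ set $Q_0 = S$, $Q_1 = S + \tilde R$, and $\beta_0 := \frak{h}(Q_1)/\frak{h}(Q_0) \in \bF^\times$. Evaluating the last relation on the degree zero divisor $[Q_1]-[Q_0]$ cancels $\Delta_{\mathrm{cov}}$ and shows that $\partial\beta_0$ represents $\frak{l}\bigl([\rho(Q_1)]-[\rho(Q_0)]\bigr)$; since $\bar\psi_*\bigl([\rho(Q_1)]-[\rho(Q_0)]\bigr) = [\varphi(S)+R]-[\varphi(S)]$ sums to $R$, the class of $[\rho(Q_1)]-[\rho(Q_0)]$ in $\Pic^0(\bC)$ is exactly $\Xi$. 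Comparing $\partial\beta_0$ with the given $K$-rational representative $\frak{l}(d)$ of $\frak{l}(\Xi)$ — the two differ by $\frak{l}$ of a principal divisor $\diw(h)$, which by the Weil reciprocity computation in the proof of the lemma preceding Proposition~\ref{DescentMapTheorem} equals a constant times $\partial(x \mapsto h(x))$, together with a constant of $\bK^\times$ that is forced to be an $\ell$-th power because $\bK^\times = (\bK^\times)^\ell$ — produces an honest lift $\beta = \beta_0 \cdot \gamma \cdot (\mathrm{const})$ of $\frak{l}(d)$ with $\gamma = (x \mapsto h(x))$ up to a constant. In particular this also shows $\frak{l}(\Xi) \in \mathcal{H}^0_K$, so the left vertical map of the diagram is well defined.

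Third, I would compute $\sigma \mapsto \lambda(\beta^\sigma/\beta)$. The constant factor dies under $\lambda$, and the $\gamma$-contribution, being the image of a coboundary attached to the principal divisor $\diw(h)$, is cohomologically trivial, so $\lambda(\beta^\sigma/\beta)$ is cohomologous to $\sigma \mapsto \lambda(\beta_0^\sigma/\beta_0)$, with $\beta_0(x) = \frak{h}_x(S+\tilde R)/\frak{h}_x(S)$. Expressing $\frak{h}_x^\sigma$ through $\frak{h}_x$ (it cuts out the $\varphi$-pullback of the divisor $\bar\psi^\sigma(x) = \bar\psi(x) + c'_\sigma$, where $\sigma \mapsto c'_\sigma \in E'[\varphi']$ is a cocycle for $C$, so the two forms differ by the linear automorphism of $\BP^{\ell-1}$ inducing translation by a $\varphi$-preimage of $c'_\sigma$) and applying Lemma~\ref{WPlemma}, the computation collapses exactly as in the proof of Proposition~\ref{DescentMapIsAffine}: the terms in $S$, in $c'_\sigma$, and in the translation automorphisms telescope away by the identity $(\psi(Q^\sigma)-\psi(Q)) - (Q^\sigma - Q) = 0_E$ of \cite[X.3.5]{Silverman}, leaving $\lambda(\beta_0^\sigma/\beta_0) = \tilde R^\sigma - \tilde R$. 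This identifies $\Phi_0^{-1}(\frak{l}(\Xi))$ with $\delta_\varphi(R)$ (and in particular shows $\delta_\varphi(R) \in \ker\Upsilon$), proving commutativity.

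The hard part is bookkeeping rather than conceptual. Because $C$ need not admit a $\varphi$-covering over $K$, the covering $D = E$, the isomorphism $\bar\psi$, and the form $\frak{h}$ are defined only over $\bK$, so each carries a Galois twist, and the crux is to verify that every resulting correction term — the $\bK^\times$-constant, the term coming from the comparison function $h$, and the translation-by-$c'_\sigma$ automorphisms of $\BP^{\ell-1}$ — either dies under the linear-part map $\lambda$ or becomes a coboundary in $\HH^1(K,E[\varphi])$. Routing this through Lemma~\ref{WPlemma}, exactly as in the proof of Proposition~\ref{DescentMapIsAffine}, keeps the computation routine.
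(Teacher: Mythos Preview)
Your approach is essentially the paper's: both pick a $\varphi$-covering $(D,\rho)$ over $\bK$, use the defining relation $\frak{l}\circ\rho = \Delta\cdot\partial\frak{h}$ to turn values of $\frak{l}$ into values of $\partial\frak{h}$, then identify the resulting cocycle in $E[\varphi]$ via Lemma~\ref{WPlemma}. The difference is in how the class $\Xi$ is represented. The paper chooses a $K$-rational divisor $d = d_1 - d_2 = \sum_j[Q_{1,j}] - \sum_j[Q_{2,j}]$ on $C$ and lifts each $Q_{i,j}$ to $R_{i,j}\in D$; because $\deg d = 0$ the $\Delta$ factor cancels and one obtains directly $\frak{l}(d) = \partial\alpha$ with $\alpha = \prod_j \frak{h}(R_{1,j})/\frak{h}(R_{2,j})$, so no correction term is needed. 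Your choice of the non-$K$-rational divisor $[\rho(S+\tilde R)]-[\rho(S)]$ forces the principal-divisor correction $\gamma$.

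That correction is where your write-up is loose. You assert that the $\gamma$-contribution is cohomologically trivial and that it suffices to compute $\lambda(\beta_0^\sigma/\beta_0)$, but neither $\beta_0^\sigma/\beta_0$ nor $\gamma^\sigma/\gamma$ need lie in $\Aff(X,\mu_\ell)$ individually (only their product $\beta^\sigma/\beta$ does, since only $\partial\beta = \frak{l}(d)$ is $G_K$-invariant), so $\lambda$ cannot be applied to the factors separately. This is not fatal: as in the proof of Proposition~\ref{DescentMapIsAffine} one can pass to $\bF^\times/\bK^\times$ via the injections $E[\varphi]\hookrightarrow \mu_\ell(\bF)/\mu_\ell \hookrightarrow \bF^\times/\bK^\times$, compute there using the additivity $\frak{h}(P_1+P_2+Q)/\frak{h}(Q) \equiv \bigl(\frak{h}(P_1+Q')/\frak{h}(Q')\bigr)\bigl(\frak{h}(P_2+Q)/\frak{h}(Q)\bigr)$ modulo $\bK^\times$, and then observe that the total lands in the image of $E[\varphi]$. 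Done carefully this works, but the paper's device of starting from a $K$-rational $d$ eliminates the issue entirely and is the cleaner route.
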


\begin{proof}[Proof of Proposition \ref{DescentOnE}]
Let $P \in \Pic^0(C) \subset E'(K)$ and choose a representative $d \in \Div(C)$ whose support is disjoint from $X$. Write $d$ as a difference $d=d_1 - d_2$ of effective divisors and write each $d_i$ as a sum $d_i = \sum_{j=1}^n Q_{i,j}$ of $n = \deg(d_1)=\deg(d_2)$ (possibly non-distinct) points on $C$. Now choose any $(D,\rho) \in \Cov_0^{(\varphi)}(C/\bK)$ and a linear form $\frak{h}$ with coefficients in $\bF$ defining the pull-back of $[\x]$. For each $Q_{i,j}$ in the support of $d$, choose a point $R_{i,j} \in D$ such that $\rho(R_{i,j})=Q_{i,j}$. These choices are such that, as points on $E'$,
\[ \varphi\bigl( R_{i,j}-R_{i',j'} \bigr) = \bigl( Q_{i,j}-Q_{i',j'}\bigr)\,,\] for any $i,j,i',j'$. In particular, $\varphi\left(\sum_{j=1}^n (R_{1,j} - R_{2,j})\right) = d$. So the image of $P$ under the connecting homomorphism is given by the cocycle
\[ \sigma \mapsto \left(\sum_{j=1}^n (R_{1,j}^\sigma - R_{2,j}^\sigma) - \sum_{j=1}^n (R_{1,j} - R_{2,j})\right) \in E[\varphi]\,.\]

On the other hand, the image of $P$ under $\frak{l}$ is represented by 
\[\frac{\frak{l}(d_1)}{\frak{l}(d_2)} =\prod_{j=1}^n\frac{\frak{l}(Q_{1,j})}{\frak{l}(Q_{2,j})} \in H^\times\,.\] The condition defining the image of $(D,\rho)$ under the descent map is, in the coordinate ring of $D \otimes_KH$, $\frak{l}\circ\rho = \Phi((D,\rho))\cdot \partial \frak{h}$. Since $\deg(d)=0$, this gives \[ \prod_{j=1}^n\frac{\frak{l}(Q_{1,j})}{\frak{l}(Q_{2,j})} = \prod_{j=1}^n \frac{\partial \frak{h}(R_{1,j})}{\partial \frak{h}(R_{2,j})} =  \partial\left(\prod_{j=1}^n \frac{\frak{h}(R_{1,j})}{\frak{h}(R_{2,j})}\right)  \in \partial \bF^\times\,.\]

Under the isomorphism $(\partial\bF^\times)^{G_K}/K^\times\partial F^\times \simeq \HH^1(K,\Aff(X,\mu_\ell))/K^\times$, $\frak{l}(P)$ is sent to the class of the cocycle $\sigma \mapsto \alpha^\sigma/\alpha\,,$ where $\alpha \in \bF^\times$ is any element such that $\partial \alpha$ represents $\frak{l}(P)$. The argument above shows we may take $\alpha = \left(\prod_{j=1}^n \frac{\frak{h}(R_{1,j})}{\frak{h}(R_{2,j})}\right)$. Hence, the image of $P$ in $\HH^1(K,\Aff(X,\mu_\ell))/K^\times$ is represented by the cocycle $\eta$ sending $\sigma \in G_K$ to 
\begin{align*}
\eta_\sigma &= \left(\prod_{j=1}^n \frac{\frak{h}(R_{1,j})}{\frak{h}(R_{2,j})}\right)^\sigma\cdot \left(\prod_{j=1}^n \frac{\frak{h}(R_{2,j})}{\frak{h}(R_{1,j})}\right)\\ &= \left(\prod_{j=1}^n \frac{\frak{h}^\sigma(R_{1,j}^\sigma)}{\frak{h}^\sigma(R_{2,j}^\sigma)}\right)\cdot \left(\prod_{j=1}^n \frac{\frak{h}(R_{2,j})}{\frak{h}(R_{1,j})}\right)\\
&= \left(\prod_{j=1}^n \frac{\frak{h}^\sigma(R_{2,j}^\sigma +(R_{1,j}^\sigma-R_{2,j}^\sigma))}{\frak{h}^\sigma(R_{2,j}^\sigma)}\right)\cdot \left(\prod_{j=1}^n \frac{\frak{h}(R_{1,j}+(R_{2,j}-R_{1,j}))}{\frak{h}(R_{1,j})}\right)\,.
\end{align*}
Note that both $\frak{h}$ and $\frak{h}^\sigma$ are linear forms satisfying the hypothesis of Lemma \ref{WPlemma}. Applying this lemma as was done in the proof of proposition \ref{DescentMapIsAffine}, to each factor appearing, we see that the image of $\eta_\sigma$ in $\HH^1(K,E[\varphi])$ is equal to the class of the cocycle \[ G_K\ni \sigma \mapsto \sum_{j=1}^n\left((R_{1,j}^\sigma-R_{2,j}^\sigma) - ((R_{1,j}-R_{2,j})\right) \in E[\varphi]\,.\] This is the same as the image under the connecting homomorphism, so the diagram commutes.
\end{proof}

\section{Inverse of the descent map}
\label{Geometry}
The purpose of this section is to prove the following theorem.

\begin{Theorem}
\label{InversePhi}
Given $\Delta \in \mathcal{H}_K$ we can explicitly compute a set of $\ell(\ell-3)/2$ quadrics (when $\ell=3$, a ternary cubic) with coefficients in $K$ which define a genus one normal curve $D_\Delta$ of degree $\ell$ in $\BP^{\ell-1}$ and a tuple $\rho_\Delta$ of homogeneous forms of degree $\ell$ which define a morphism $D_\Delta \to C \subset \BP^{\ell-1}$ giving $D_\Delta$ the structure of a $\varphi$-covering of $C$. Moreover, the class of $(D_\Delta,\rho_\Delta)$ lies in $\Cov_0^{(\varphi)}(C/K)$ and its image under $\Phi$ is $\Delta$.
\end{Theorem}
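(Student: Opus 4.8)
The plan is to invert the defining relation (\ref{rel}), $\frak{l}\circ\rho=\Delta\cdot\partial\frak{h}$, of the descent map. First I fix a canonical auxiliary form: choosing a $K$-basis $\omega_1,\dots,\omega_\ell$ of $F$, set $\frak{h}:=\sum_{i=1}^{\ell}\omega_i z_i\in F[z_1,\dots,z_\ell]$, a linear form in the homogeneous coordinates $z_1,\dots,z_\ell$ of the target $\BP^{\ell-1}$. Over $\bK$ the identification $\bF\cong\prod_{x\in X}\bK$ turns the coefficients of $\frak{h}$ evaluated at $x$ into the linear forms $h'_x=\sum_i\omega_i(x)z_i$, which are linearly independent because $(\omega_i(x))_{i,x}$ is invertible; the normalization amounts to declaring that $h'_x$ cuts out $\rho^*[x]$ on the covering to be built. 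With $\frak{h}$ fixed, the $Y$-indexed tuple $\partial\frak{h}\in H[z_1,\dots,z_\ell]$ of degree-$\ell$ forms, and hence $\Delta\cdot\partial\frak{h}$, are explicitly computable from $\Delta$.

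Next, write $\frak{l}=\sum_{j=1}^{\ell}\frak{l}_j u_j$ with $\frak{l}_j\in H$ in the $u$-coordinates of $C\subseteq\BP^{\ell-1}$; by nondegeneracy of $C$ the $\frak{l}_j$ span an $\ell$-dimensional $K$-subspace $V\subseteq H$, and I fix a $K$-linear retraction $\frak{l}^{-1}\colon H\to K^{\ell}$ of $u\mapsto(\frak{l}_j\cdot u)_j$. Set $\rho_\Delta:=\frak{l}^{-1}\!\circ(\Delta\cdot\partial\frak{h})$, an $\ell$-tuple of degree-$\ell$ forms, and let $D_\Delta\subseteq\BP^{\ell-1}(z)$ be cut out by the degree-$\ell$ equations expressing that $\Delta\cdot\partial\frak{h}(z)\in V\otimes_K\bK$, together with the equations obtained by substituting $\rho_\Delta$ into the defining equations of $C$. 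All of these data are $G_K$-equivariant, so $D_\Delta$ and $\rho_\Delta$ are defined over $K$.

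To verify the asserted properties I would work over $\bK$ and then descend. Over $\bK$ there is a unique $(D_0,\rho_0)\in\Cov_0^{(\varphi)}(\bC/\bK)=\Cov^{(\varphi)}(\bC/\bK)$, and $\mathcal{H}_{\bK}^0$ is trivial. Realizing $D_0$ as a genus one normal curve of degree $\ell$ as in Lemma \ref{Cov0property}, so that relation (\ref{rel}) holds in that model for some linear form and scalar, a coordinate change on $\BP^{\ell-1}(z)$ together with rescalings lets us arrange it to read exactly $\frak{l}\circ\rho_0=\Delta\cdot\partial\frak{h}$ with our normalized $\frak{h}$ (this uses that the class of $\Delta$ and that of $(D_0,\rho_0)$ agree in the singleton $\mathcal{H}_{\bK}$). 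Then $D_0$ satisfies the equations defining $D_\Delta\otimes_K\bK$ with $\rho_0=\rho_\Delta$, and a degree count --- transparent because $\bF\cong\prod_x\bK$ makes $\partial\frak{h}$ a monomial in the $h'_x$, so the conditions $\Delta\cdot\partial\frak{h}(z)\in V$ visibly describe the pullback to $D_0$ of (a translate of) $C$ under the degree-$\ell$ presentation $E\xrightarrow{\varphi}E'$ --- shows the saturated ideal cuts out nothing beyond $D_0$. Hence $D_\Delta\otimes\bK=D_0$ and $\rho_\Delta=\rho_0$; descending, $D_\Delta$ is a smooth genus one normal curve of degree $\ell$, so it is cut out by its space of quadrics, of dimension $\binom{\ell+1}{2}-2\ell=\ell(\ell-3)/2$ (the ternary cubic being the $\ell=3$ exception), which is computed by saturating the ideal above; $\rho_\Delta$ is a morphism $D_\Delta\to C$ giving $D_\Delta$ the structure of a $\varphi$-covering, lying in $\Cov_0^{(\varphi)}(C/K)$ by Lemma \ref{Cov0property} since $\rho_\Delta^*[x]$ is the hyperplane section cut out by the $x$-component of $\frak{h}$; and $\Phi((D_\Delta,\rho_\Delta))=\Delta$ follows from $\frak{l}\circ\rho_\Delta=\Delta\cdot\partial\frak{h}$ (which now holds over $K$) and the defining property of $\Phi$ in Proposition \ref{DescentMapTheorem}. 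When $C$ is everywhere locally solvable over a number field, so that $\Cov_0^{(\varphi)}(C/K)\ne\emptyset$ is already known, Corollary \ref{Injective} gives a shortcut: the abstract covering with invariant $\Delta$ exists over $K$ and only needs to be matched to these equations.

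The main obstacle is the degree count asserting that the explicit equations cut out precisely $D_\Delta$, with no spurious components or embedded structure, and that the quadrics among them generate the saturated ideal. I would settle this geometrically over $\bK$, where $\bF\cong\prod_x\bK$ reduces the defining conditions to equations directly comparable with those coming from the fiber-product description $E\times_{E'}\bC$ of the covering, so that $D_\Delta\otimes\bK$ is recognized on the nose; the descent to $K$ is then pure Galois descent. A secondary point, used in the normalization, is the linear independence of the forms cutting out the various $\rho^*[x]$; this reduces to the universal covering $E\xrightarrow{\varphi}E'$, where the divisors $\varphi^*[P']$, $P'\in E'[\varphi']$, occupy independent positions in $|\varphi^*[0_{E'}]|$.
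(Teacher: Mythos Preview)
Your strategy matches the paper's: impose the relation $\frak{l}(u)=\Delta\cdot\partial\frak{h}(z)$ to define $(D_\Delta,\rho_\Delta)$, then verify over $\bK$ that the equations cut out precisely (a re-embedding of) the unique $\varphi$-covering $E\to E'\cong C$. The paper phrases this as a subscheme $\tilde{D}_\Delta\subset\BP_X\times C$ with $D_\Delta$ its projection, but this is your construction in different dress.

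Where the two diverge is in producing the quadrics and in the verification step. For $\ell\ge 5$ the paper does not saturate your degree-$\ell$ equations; instead it takes the ratio of the two components of $\frak{l}=(\frak{l}_1,\frak{l}_2)$ and exploits that $\partial_2(z)=z^{\ell-2}Q(z)$ with $Q$ a quadratic form over $H_2$, so that $(\frak{l}_2/\frak{l}_1)\,z^2=(\Delta_2/\Delta_1)\,Q(z)$ gives $\ell(\ell-1)/2$ quadrics with coefficients in $\kappa(C)$. Eliminating $u$ yields quadrics over $K$, and a separate lemma counts them: over $\bK$ the functions $G_{(x,P)}=\frak{l}_{(x,P)}/\frak{l}_{(x,0)}$ lie in the $2$-dimensional Riemann--Roch space $\mathcal{L}(2[x])$, so fixing one $P_0$ produces $\ell\cdot\#\{P\ne 0,\pm P_0\}/\{\pm 1\}=\ell(\ell-3)/2$ linearly independent relations. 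This makes the phrase ``explicitly compute'' in the theorem statement honest without an ideal-saturation step.

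For the verification over $\bK$ the paper is also more concrete than your ``degree count'': it embeds $E$ in $\BP_X$ via the functions $G_x$ with $\diw(G_x)=\varphi^*[x]-\varphi^*[0_{E'}]$ (the Weil-pairing construction), checks directly that $g(E)\subset D_{\bar\Delta}$, and then concludes $g(E)=D_{\bar\Delta}$ because both sides are cut out by $\ell(\ell-3)/2$ independent quadrics in $\BP^{\ell-1}$. Your parenthetical explanation of the degree count (``visibly describe the pullback to $D_0$ of a translate of $C$'') is the place where your argument is least precise; the paper's quadric count is what closes this gap cleanly. Your observation about linear independence of the forms cutting out the $\rho^*[x]$ is exactly the nondegeneracy of the Weil pairing that the paper invokes for the $G_x$.
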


The construction gives an explicit inverse to the descent map, and hence the following corollary. This removes the assumption $\Cov_0^{(\varphi)}(C/K) \ne \emptyset$ in corollary \ref{Injective}.

\begin{Corollary}
\label{inversePhi2}
The descent map $\Phi$ gives an isomorphism of affine spaces $\Phi:\Cov_0^{(\varphi)}(C/K) \simeq \mathcal{H}_K$.
\end{Corollary}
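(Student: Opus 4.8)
The plan is to produce $(D_\Delta,\rho_\Delta)$ by explicit formulas extracted from the defining relation $\frak{l}\circ\rho_\Delta=\Delta\cdot\partial\frak{h}$ of Proposition \ref{DescentMapTheorem}, to read off $K$-rationality from the fact that the formula for $\rho_\Delta$ is a trace from $F$, and to obtain the remaining assertions (genus one normality, membership in $\Cov_0^{(\varphi)}$, and $\Phi(D_\Delta,\rho_\Delta)=\Delta$) by comparison with the situation over $\bK$, where $\Cov_0^{(\varphi)}(\bC/\bK)$ contains exactly one class.

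\emph{Construction.} Fix a representative $\Delta=(\Delta_1,\Delta_2)\in H^\times=F^\times\times H_2^\times$ of the given class. Since $\mathcal{H}_{\bK}^0=0$ (Lemma \ref{kerUpsH0}), there is over $\bK$, up to isomorphism, a unique $(\bar D_0,\bar\rho_0)\in\Cov_0^{(\varphi)}(\bC/\bK)$, and (using $\HH^1(K,\bF^\times)=0$, Hilbert 90) we may choose a linear form $\frak{h}$ \emph{defined over $F$} cutting out $\bar\rho_0^*[\x]$; after a harmless rescaling the relation of Proposition \ref{DescentMapTheorem} reads $\frak{l}\circ\bar\rho_0=\Delta\cdot\partial\frak{h}$ in the homogeneous coordinate ring of $\bar D_0\otimes_{\bK}\bH$. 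Writing $\frak{l}=(\frak{l}_1,\frak{l}_2)$ and comparing the $Y_1$-components, with $M$ the coefficient matrix of the $F$-linear form $\frak{l}_1$ (invertible, by general position of the $\ell$ osculating hyperplanes of $C$ at the points of $X$), Cramer's rule gives
\begin{align*}
\rho_{\Delta,i}\;=\;\sum_{x\in X}(M^{-1})_{i,x}\,\Delta_{1,x}\,\frak{h}_x^{\,\ell}\qquad(1\le i\le\ell),
\end{align*}
a degree-$\ell$ form in $z_1,\dots,z_\ell$. As $\frak{l}_1$, $\frak{h}$ and $\Delta_1$ are $G_K$-equivariant, each summand is a $G_K$-equivariant function of $x\in X$, so the sum is $G_K$-invariant: $\rho_{\Delta,i}\in K[z_1,\dots,z_\ell]$. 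I then define $D_\Delta$ to be the distinguished component (equivalently, the zero locus after saturation) of the $K$-rational family of degree-$\ell$ forms $(\frak{l}_2)_y(\rho_\Delta)-\Delta_{2,y}(\partial_2\frak{h})_y$ indexed by $y\in Y_2$; over $\bK$ this locus contains $\bar D_0$, so $D_\Delta$ is a smooth genus one normal curve of degree $\ell$, defined over $K$. (For $\ell=3$ there is a single such form, $\frak{l}_2(\rho_\Delta)-\Delta_2\,N_{F/K}(\frak{h})$, and it is the ternary cubic of the statement.) Finally, since genus one normal curves of degree $\ell\ge4$ are projectively normal with homogeneous ideal generated in degree $2$ \cite[Chapter I]{CFOSS}, the $\ell(\ell-3)/2$-dimensional space of quadrics cutting out $D_\Delta$ is recovered by linear algebra over $K$ from the degree-$\ell$ equations above.

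\emph{Verification.} By construction $\frak{l}\circ\rho_\Delta=\Delta\cdot\partial\frak{h}$ holds on $D_\Delta$ (the $Y_1$-part identically on $\BP^{\ell-1}$, the $Y_2$-part by the definition of $D_\Delta$) and $\frak{h}$ cuts out $\rho_\Delta^*[\x]$, so Proposition \ref{DescentMapTheorem} yields $\Phi(D_\Delta,\rho_\Delta)=\Delta$. Over $\bK$ the pair is isomorphic to $\bar\rho_0$, hence $(D_\Delta,\rho_\Delta)$ is a $\varphi$-covering of $C$; and since $\rho_\Delta^*[x]$ is the hyperplane section $\{\frak{h}_x=0\}$ for each $x\in X$, Lemma \ref{Cov0property} gives $(D_\Delta,\rho_\Delta)\in\Cov_0^{(\varphi)}(C/K)$. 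This proves Theorem \ref{InversePhi}; Corollary \ref{inversePhi2} then follows at once, $\Phi$ being injective with image $\mathcal{H}_K$ by Corollary \ref{Injective} (whose nonemptiness hypothesis is now automatic).

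\emph{Main obstacle.} The routine parts are the formula for $\rho_\Delta$ and the bookkeeping of the Galois action. The real difficulty is the geometry of $D_\Delta$: one must show that the explicit equations cut out (a component that is) precisely a smooth genus one normal curve of degree $\ell$ on which $\rho_\Delta$ restricts to the covering map --- rather than something empty, reducible in an uncontrolled way, or non-reduced --- which requires a sufficiently precise description of the picture for $\bar\rho_0$ over $\bK$ (in particular of the base locus of $\rho_\Delta$ and of the Hilbert polynomial of the zero locus) together with the invertibility of $M$. Passing from the degree-$\ell$ equations to the $\ell(\ell-3)/2$ defining quadrics is then a standard, if not entirely trivial, exercise in the theory of genus one normal curves.
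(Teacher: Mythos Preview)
Your construction is essentially the paper's --- with $\frak{h}_x=z_x$ the coordinate functions on $\BP_X$, your Cramer formula for $\rho_{\Delta,i}$ is exactly what falls out of the paper's defining relation $\frak{l}(u)=a\,\Delta\,\partial(z)$ in $\BP_X\times C$. But the logic of your argument is circular at the step where you claim $\frak{h}$ may be chosen ``defined over $F$'' via Hilbert~90.

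The difficulty is that $\bar D_0$ is given only as a $\bK$-scheme, so the ambient $\BP^{\ell-1}$ containing your model carries no $G_K$-structure, and the phrase ``$\frak{h}\in F[z_1,\dots,z_\ell]$'' has no meaning until one is specified. Hilbert~90 would let you normalize the scalar ambiguity in the $\frak{h}_x$ \emph{once} a compatible Galois action exists, but it does not produce one --- and producing one is exactly the descent problem you are trying to solve. Worse, your ``harmless rescaling'' to force $\frak{l}\circ\bar\rho_0=\Delta\,\partial\frak{h}$ with your fixed $\Delta\in H^\times$ multiplies $\frak{h}$ by an element of $\bF^\times$ that has no reason to lie in $F^\times$; so even if $\frak{h}$ had been $G_K$-equivariant before, it is not afterwards, and your trace argument for the $K$-rationality of $\rho_{\Delta,i}$ collapses.

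The paper avoids this by reversing the order: it \emph{defines} $D_\Delta\subset\BP_X$ over $K$ from the outset (so $\frak{h}=z$ tautologically lies in $F[z]$, Galois acting by permuting coordinates through $X$), and only afterwards checks over $\bK$ that $(D_\Delta,\rho_\Delta)$ is a $\varphi$-covering. For that check it takes $E\xrightarrow{\varphi}E'\simeq\bC$ as an explicit $\bar D_0$, embedded in $\BP_X$ via functions $G_x$ with $\diw(G_x)=\varphi^*[x]-\varphi^*[0_{E'}]$; then $g(E)\subset D_{\bar\Delta}$ is immediate, and equality follows because a separate lemma shows the elimination yields exactly $\ell(\ell-3)/2$ independent quadrics --- the number required to generate the ideal of a genus one normal curve of degree~$\ell$. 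This quadric count is precisely the resolution of your ``Main obstacle,'' and it (or something equivalent) cannot be omitted: merely knowing that your locus \emph{contains} $\bar D_0$ does not rule out extra components, and ``distinguished component after saturation'' is not a substitute for showing the equations already cut out the curve scheme-theoretically.
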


Let $\BP_X$ be the projective space over $K$ whose coordinates correspond to the elements of $X$. We have an identification of $(F\setminus\{0\})/K^\times$ and $\BP_X(K)$. The action of $F^\times$ on $(F\setminus\{0\})/K^\times$ by multiplication corresponds to an action of $F^\times$ on $\BP_X$ by $K$-automorphisms.

Given $\Delta \in H^\times$, we can define a scheme $\tilde{D}_\Delta \subset \BP_X \times C$ by the rule
\[ \left(z,(u_1:\dots:u_\ell)\right) \in \tilde{D}_\Delta \Leftrightarrow \exists\,a \in K^\times \text{ such that } \frak{l}(u_1,\dots,u_\ell) = a\Delta\partial(z)\,.\]

This condition is invariant under scaling and the action of the Galois group, so $\tilde{D}_\Delta$ is defined over $K$. Moreover, the action of $\alpha \in F^\times$ on $\BP_X$ induces a $K$-isomorphism $\tilde{D}_{\partial(\alpha)\Delta} \simeq \tilde{D}_{\Delta}$. Thus $\tilde{D}_\Delta$ only depends on the class of $\Delta$ in $H^\times/K^\times\partial F^\times$. Define $D_\Delta \subset \BP_X$ to be the image of $\tilde{D}_\Delta$ under the projection of $\BP_X \times C$ onto the first factor. Then $\tilde{D}_\Delta$ is the graph of a morphism $\rho_\Delta:D_\Delta \to C$.

The following lemma gives some justification for this construction.
\begin{Lemma}
\label{imisright}
If $(D_\Delta,\rho_\Delta)$ is a $\varphi$-covering of $C$, then its class lies in $\Cov_0^{(\varphi)}(C/K)$ and its image under $\Phi$ is represented by $\Delta$.
\end{Lemma}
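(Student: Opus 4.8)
The plan is to extract everything from the defining relation of $\tilde D_\Delta$. Write $(z_x)_{x\in X}$ for the homogeneous coordinates on $\BP_X$ and let $\frak{h}\in F[z_x:x\in X]$ be the tautological linear form given by $\frak{h}_x=z_x$; this is $G_K$-equivariant because $G_K$ permutes the coordinates of $\BP_X$ exactly as it permutes $X$. I must show two things: that the class of $(D_\Delta,\rho_\Delta)$ lies in $\Cov_0^{(\varphi)}(C/K)$, and that $\Phi$ sends it to the class of $\Delta$. For the first I will check, via Lemma \ref{Cov0property}, that $D_\Delta\subset\BP_X\cong\BP^{\ell-1}$ is a genus one normal curve of degree $\ell$ over $K$ in which $\rho_\Delta^*[x]$ is a hyperplane section for every $x\in X$; for the second I will compare the defining relation of $\tilde D_\Delta$ with the construction of $\Phi$ from Proposition \ref{DescentMapTheorem}, using the linear form $\frak{h}$.

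For the first part: restricting the defining relation $\frak{l}(u)=a\Delta\partial(z)$ to the factor $H_1\simeq F$ of $H=H_1\times H_2$, and using $\partial_1(z)=(z_x^\ell)_x$ together with the fact that $\frak{l}_1$ cuts out the divisor $\ell[\x]$, gives on $D_\Delta$ the identities $(\frak{l}_1)_x\circ\rho_\Delta=a\,(\Delta_1)_x\,z_x^\ell$ for $x\in X$, both sides being nonzero sections since $(\frak{l}_1)_x$ does not vanish on $C=\rho_\Delta(D_\Delta)$. Taking divisors, using $\diw_C((\frak{l}_1)_x)=\ell[x]$ and comparing degrees (here $\rho_\Delta^*[x]$ has degree $\deg(\varphi)=\ell$, the divisor of $z_x$ is effective, and $a$ is a rational function) forces $\mathcal{O}_{D_\Delta}(1)$ to have degree $\ell$; since $D_\Delta$ has genus one (being a $\varphi$-covering), it is a genus one normal curve of degree $\ell$ over $K$, and moreover $\diw_{D_\Delta}(z_x)=\rho_\Delta^*[x]$ because these are effective divisors of equal degree with common support $\rho_\Delta^{-1}(x)$, reduced because $\rho_\Delta$ — a twist of the étale isogeny $\varphi$ — is unramified. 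Thus $\frak{h}$ cuts out $\rho_\Delta^*[\x]$ on $D_\Delta\otimes_KF$, and Lemma \ref{Cov0property} yields $(D_\Delta,\rho_\Delta)\in\Cov_0^{(\varphi)}(C/K)$. I expect this to be the one step with real content — getting both the divisorial identity and the fact that $D_\Delta\subset\BP_X$ is genuinely a genus one normal curve of degree $\ell$ out of the bare defining relation; everything else is formal bookkeeping.

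For the value of $\Phi$: running the construction in the proof of Proposition \ref{DescentMapTheorem} with this $\frak{h}$ produces a constant $\Delta'\in H^\times$ representing $\Phi((D_\Delta,\rho_\Delta))$ and satisfying $\frak{l}\circ\rho_\Delta=\Delta'\cdot\partial\frak{h}$ in the homogeneous coordinate ring of $D_\Delta\otimes_KH$ (legitimate because both sides cut out $\rho_\Delta^*[\y]=\rho_\Delta^*(\partial[\x])$). On the other hand, $\partial\frak{h}$ is just $\partial(z)$ expressed in the coordinates, so the defining relation of $\tilde D_\Delta$ itself reads $\frak{l}\circ\rho_\Delta=a\cdot\Delta\cdot\partial\frak{h}$; dividing this by the previous identity and using that the components of $\partial\frak{h}$ are nonzero forms, I get $\Delta'=a\Delta$, and since $\Delta,\Delta'\in H^\times$ while $a$ is a scalar, $a\in K^\times$. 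Hence $\Phi((D_\Delta,\rho_\Delta))$ is the class of $\Delta$ in $H^\times/K^\times\partial F^\times$, as claimed.
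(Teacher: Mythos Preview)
Your proposal is correct and follows essentially the same approach as the paper: read off from the $H_1$-component of the defining relation that $\{z_x=0\}$ cuts out $\rho_\Delta^*[x]$ (so Lemma~\ref{Cov0property} applies), and then recognize the full defining relation as precisely the identity $\frak{l}\circ\rho_\Delta=(\text{const})\cdot\partial\frak{h}$ from the construction of $\Phi$ with $\frak{h}=z$. Your write-up is more careful than the paper's—justifying that $D_\Delta\subset\BP_X$ has degree $\ell$ and that the auxiliary scalar $a$ lands in $K^\times$—but the argument is the same.
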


\begin{proof}
For the first statement it is enough to show that the pull back of any $x \in X$ is a hyperplane section of $D_\Delta$ (see Lemma \ref{Cov0property}). For this we can work geometrically. Over $\bK$, $\frak{l}_1$ splits as $(\frak{l}_x)_{x \in X}$ where $\frak{l}_x$ defines the hyperosculating plane to $C$ at $x$. The condition defining $\tilde{D}_\Delta$ gives $\frak{l}_x = a\Delta_xz_x^\ell$, for $x \in X$. From this it is clear that the fiber above $x \in X$ is cut out by the hyperplane $z_x = 0$. This proves the first statement. It also shows that $z$ is a linear form defining the pullback of $[\x]$ under $\rho_\Delta$. From the defining equation we see that $\frak{l}\circ \rho_\Delta = a\Delta \partial(z)$, from which it follows that $\Phi((D_\Delta,\pi_\Delta))$ is represented by $\Delta$.
\end{proof}

To make the construction more explicit, we can proceed as follows. Suppose $\Delta = (\Delta_1,\Delta_2) \in F^\times\times H_2^\times \simeq H^\times$. The equation $\frak{l}(u_1,\dots,u_\ell) = \Delta a \partial(z)$, where $a \in K^\times$, $z \in F^\times$ splits as a pair of equations
\begin{align}\label{startingeqs} \frak{l}_1(u_1,\dots,u_\ell) = \Delta_1az^\ell && \frak{l}_2(u_1,\dots,u_\ell) = \Delta_2a\partial_2(z)\,,\end{align} over $F$ and $H_2$, respectively. In terms of a basis for $F$ over $K$, $z^\ell$ and $\partial_2(z)$ can be written as forms of degree $\ell$ in $K[z_1,\dots,z_\ell]$.

When $\ell = 3$, writing $\frak{l}_1(u_1,\dots,u_\ell) = \Delta_1 az^\ell$ in terms of this basis and comparing coefficients yields $3$ equations, linear in the $u_i$, cubic in $z_j$ and with coefficients in $K$. Together with $\frak{l}_2 = a\Delta_2\partial_2(z)$ we have $4$ equations of this form (recall $H_2 = K$ when $\ell = 3$). Using linear algebra this system of equations reduces to
\begin{align*}
u_1 &= \rho_1(z_1,z_2,z_3),\\
u_2 &= \rho_2(z_1,z_2,z_3),\\
u_3 &= \rho_3(z_1,z_2,z_3),\\
0 &= f(z_1,z_2,z_3),
\end{align*}
where all forms on the right hand side are of degree $3$. Then $D_\Delta$ is the plane cubic with homogeneous ideal generated by $f$ and the $\rho_i$ define a map to $\BP^2$ (which contains $C$).

When $\ell \ge 5$ the same approach will yield homogeneous forms of degree $\ell$. This is perfectly acceptable for defining $\rho_\Delta$, however the model for $D_\Delta$ should be defined as an intersection of quadrics (see \cite[I.1.3]{CFOSS}). Since $F$ is a subalgebra of $H_2$ we may consider both equations in (\ref{startingeqs}) as being defined over $H_2$. There is a quadratic form $Q(z)$ with coefficients in $H_2$ such that $\partial_2(z) = z^{\ell-2}Q(z)$ (this is clear from the definition of $Y_2$ and $\partial_2$). To get something homogeneous we take the ratio of the two equations  in (\ref{startingeqs}) and multiply through by $z^2$. This gives
\begin{align}
\label{homogeq}
\frac{\frak{l}_2(u)}{\frak{l}_1(u)} z^2 = \frac{\Delta_2}{\Delta_1}Q(z)\,. \end{align} Writing this out in a basis for $H_2$ over $K$ (extending that for $F$ over $K$ used above) yields $\ell(\ell-1)/2$ quadrics in $z$ whose coefficients are $K$-rational rational functions on $C$. Elimination will result in some number of quadrics in $z$ with coefficients in $K$. Then $D_\Delta$ is the subscheme of $\BP_X \simeq \BP^{\ell-1}$ whose homogeneous ideal is generated by these quadrics, and $\rho_\Delta$ is obtained as above.

\begin{Remark}
From this explicit construction it is not a priori clear that $\rho_\Delta$ maps $D_\Delta$ to $C$. That this is so will become evident in the proof of Theorem \ref{InversePhi} below.
\end{Remark}

\begin{Lemma}
Elimination of $u_1,\dots,u_\ell$ from the $\ell(\ell-1)/2$ quadrics above results in $\ell(\ell-3)/2$ linearly independent quadrics with coefficients in $K$.
\end{Lemma}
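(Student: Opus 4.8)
The plan is to reduce everything to a geometric computation over $\bK$ and then finish with Riemann--Roch on $C$. Since the whole construction is defined over $K$ and linear independence of a family of $K$-rational forms is unaffected by extension of scalars, it suffices to prove the statement over $\bK$, where $F$, $H_2$ and the families $\frak{l}_1,\frak{l}_2$ split completely. Identify $X$ with a torsor under $E'[\varphi']\cong\Z/\ell$, so that $Y_2$ consists of the divisors $y=(\ell-2)[x]+[x+P]+[x-P]$ with $x\in X$ and $P\in(E'[\varphi']\setminus\{0\})/\{\pm1\}$. The corresponding component $\frak{l}_y$ of $\frak{l}_2$ (resp.\ $\frak{l}_x$ of $\frak{l}_1$) is the hyperplane cutting out the divisor $y$ (resp.\ $\ell[x]$) on $C$; these exist because $y$ and $\ell[x]$ both have degree $\ell$ and sum to $0_{E'}$, hence are linearly equivalent to the hyperplane class $\pi^*[0_E]$. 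Writing equation (\ref{homogeq}) out component by component over $\bK$, the quadric indexed by $y$ becomes
\[ q_y:\qquad z_{x+P}\,z_{x-P}\;=\;\lambda_y\,z_x^2\,,\qquad\text{where}\quad\lambda_y:=\frac{(\Delta_1)_x}{(\Delta_2)_y}\cdot\frac{\frak{l}_y}{\frak{l}_x}\in\kappa(C)^\times\,, \]
and one computes $\diw(\lambda_y)=y-\ell[x]=[x+P]+[x-P]-2[x]$.

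First I would note that $y\mapsto\{x+P,x-P\}$ is a bijection from $Y_2$ onto the set of unordered pairs of distinct points of $X$ (both sets have cardinality $\binom{\ell}{2}$, and the centre $x$ is recovered from a pair as its midpoint, using that $\ell$ is odd). Consequently the monomials $z_{x+P}z_{x-P}$ occurring in the $q_y$ are pairwise distinct, and distinct from every $z_c^2$, so the $q_y$ are $\kappa(C)$-linearly independent and span a $\kappa(C)$-subspace $V$ of dimension $\ell(\ell-1)/2$ inside $\kappa(C)\otimes_{\bK}\bK[z_1,\dots,z_\ell]_2$. Eliminating $u_1,\dots,u_\ell$ --- that is, intersecting the ideal $(q_y:y\in Y_2)$ of $\kappa(C)[z]$ with $\bK[z]$ --- produces in $z$-degree two exactly $V\cap\bK[z]_2$, since in $z$-degree two the ideal is just the $\kappa(C)$-span of the $q_y$ (there are no generators of lower $z$-degree). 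So the lemma reduces to the claim $\dim_{\bK}(V\cap\bK[z]_2)=\ell(\ell-3)/2$.

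To establish that, write a general element of $V$ as $Q=\sum_y f_y q_y$ with $f_y\in\kappa(C)$. The coefficient of $z_{x+P}z_{x-P}$ in $Q$ is $f_y$, so $Q\in\bK[z]_2$ forces $f_y\in\bK$ for all $y$; granting that, the coefficient of $z_c^2$ is $-\sum_{P}f_{y(c,P)}\lambda_{y(c,P)}$ (the sum over the $(\ell-1)/2$ indices centred at $c$), which must also be constant. Hence $V\cap\bK[z]_2$ is identified with the set of tuples $(f_y)\in\bK^{Y_2}$ satisfying $\sum_{P}f_{y(c,P)}\lambda_{y(c,P)}\in\bK$ for each $c\in X$, and since these constraints involve disjoint sets of variables its dimension is $\sum_{c\in X}$ of the individual dimensions. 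For fixed $c$, each $\lambda_{y(c,P)}$ has $\diw(\lambda_{y(c,P)})\geq-2[c]$, hence lies in the $2$-dimensional Riemann--Roch space $L(2[c])$, which contains the constants; being non-constant, each $\lambda_{y(c,P)}$ maps to a nonzero element of the $1$-dimensional quotient $L(2[c])/\bK$. Therefore the $\bK$-linear map $\bK^{(\ell-1)/2}\to L(2[c])/\bK$, $(f_{y(c,P)})\mapsto[\sum_P f_{y(c,P)}\lambda_{y(c,P)}]$, is surjective with kernel of dimension $\tfrac{\ell-1}{2}-1=\tfrac{\ell-3}{2}$. Summing over the $\ell$ points of $X$ gives $\dim_{\bK}(V\cap\bK[z]_2)=\ell\cdot\tfrac{\ell-3}{2}$, as required.

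The parts needing most care are the two checks framing the reduction: that the components $\frak{l}_y$ really do cut out the claimed divisors (this is exactly the equivalences $y\sim\ell[x]\sim\pi^*[0_E]$ used above), and that elimination in $z$-degree two yields all of $V\cap\bK[z]_2$ and nothing more --- which holds because all $q_y$ have $z$-degree two, so no syzygy can contribute a new degree-two element. Everything else is bookkeeping, the only genuinely geometric ingredient being $\dim L(2[c])=2$, immediate from Riemann--Roch on the genus-one curve $C$. (As a consistency check, $\ell(\ell-3)/2$ is also the dimension of the space of quadrics through a genus one normal curve of degree $\ell$ in $\BP^{\ell-1}$, which is what $D_\Delta$ will turn out to be.)
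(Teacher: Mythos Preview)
Your proof is correct and takes essentially the same approach as the paper: both pass to $\bK$, write the $y$-component of the system as $z_{x+P}z_{x-P}=\lambda_y z_x^2$ with $\lambda_y\in L(2[x])$ nonconstant, and use $\dim L(2[x])=2$ from Riemann--Roch to count the combinations with constant coefficients. The only difference is presentational --- the paper fixes a basepoint $P_0$ and uses the relation $G_{(x,P)}=a_PG_{(x,P_0)}+b_P$ in $L(2[x])$ to exhibit the $\ell(\ell-3)/2$ quadrics explicitly, whereas you obtain the same number as the kernel dimension of the surjections $\bK^{(\ell-1)/2}\to L(2[c])/\bK$ summed over $c\in X$.
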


\begin{proof}
To prove this we may work geometrically. Over $\bK$ the linear form $\frak{l}$ splits as $\frak{l} = (\frak{l}_{(x,P)})$, where $x \in X$, $P \in E'[\varphi']/\{\pm1\}$ and $\frak{l}_{(x,P)}$ cuts out the divisor $(\ell-2)[x] + [x+P] + [x-P]$ on $C$. For distinct $(x,P)$ we have distinct rational functions \[ G_{(x,P)} = \frac{\frak{l}_{(x,P)}}{\frak{l}_{(x,0)}} \in \kappa(\bC)^\times\,, \] with divisors $[x+P]+[x-P]-2[x]$. In particular, for any $x$, the functions $G_{(x,P)}$ lie in the Riemann-Roch space $\mathcal{L}(2[x])$, which has dimension $2$. If we fix $P_0 \in E'[\varphi']\setminus\{0\}$, then for any $P\in \frac{E'[\varphi']\setminus\{0\}}{\{\pm 1\}}$, we can find $a_{P},b_{P} \in \bK$ such that \[ G_{(x,P)} = a_{P}G_{(x,P_0)} + b_{P}\,.\] The functions $G_{(x,P)}$ have distinct divisors, so $a_P,b_P \ne 0$, for $P \ne P_0$.

In an appropriate basis for $\bF$ over $\bK$, the homogeneous equation (\ref{homogeq}) corresponds to a system of equations \[ G_{(x,P)}\cdot z_x^2 = \Delta_{(x,P)}/\Delta_{(x,0)}\cdot z_{x+P}z_{x-P}\,, \] parametrized by $(x,P) \in X \times \frac{E'[\varphi']\setminus \{0\}}{\{\pm 1\}}$. Using the relations above to eliminate the $G_{(x,P)}$ we obtain a set of quadrics
\[ a_{P}\frac{\Delta_{(x,P_0)}}{\Delta_{(x,0)}}\cdot z_{x+P_0}z_{x-P_0} + b_{P}\cdot z_x^2 =\frac{\Delta_{(x,P)}}{\Delta_{(x,0)}}\cdot z_{x+P}z_{x-P}\,,\] parametrized by $P \in \frac{E'[\varphi'] \setminus\{0,\pm P_0\}}{\{\pm1\}}$ and with coefficients in $\bK$. The coefficients here are all nonzero elements of $\bK$, so the quadrics are linearly independent. Thus, we have a set of $\#X\cdot\#\left(\frac{E'[\varphi'] \setminus\{0,\pm P_0\}}{\{\pm1\}}\right) = \ell(\ell-3)/2$ independent quadrics as required.
\end{proof}

It remains to prove that $(D_\Delta,\rho_\Delta)$ is in fact a $\varphi$-covering of $C$.

\begin{proof}[Proof of Theorem \ref{InversePhi}]
Fix an isomorphism (over $\bK$) of $C$ and $E'$ and use it to identify the two. We may arrange for this isomorphism to identify $X$ and $E'[\varphi']$. Let us compute the image under the descent map of the $\varphi$-covering in $\Cov_0^{(\varphi)}(C/\bK)$ given by $E \stackrel{\varphi}\to E' = C$. For this we should embed $E$ in $\BP^{\ell-1}$ in such a way that the pull back of any flex point is a hyperplane section. This amounts to finding a basis for the Riemann-Roch space of the divisor $\varphi^*[0_{E'}]$, which has dimension $\ell = \#E'[\varphi']$. For each $x \in E'[\varphi']$ choose a function $G_x \in \kappa(\bar{E})$ with $\diw(G_x) = \varphi^*[x] - \varphi^*[0_{E'}]$. The standard construction of the $\varphi$-Weil pairing shows that such functions exist. Nondegeneracy of the pairing shows that the $G_x$ are linearly independent, and hence form a basis for $\mathcal{L}(\varphi^*[0_{E'}])$.

This gives an embedding $g: E \ni Q \mapsto (G_x(Q))_{x \in E'[\varphi']} \in \BP_X$. It is evident that the pull back of any $x \in E'[\varphi']$ is the hyperplane section of $g(E)$ cut out by $z_x = 0$. Let $Q \in E\setminus E[\ell]$. Then $\varphi(Q) \notin E'[\varphi']$, so $\frak{l}(\varphi(Q))$ is invertible in $\bH$. The image of $(E,\varphi)$ under the descent map is represented by the $\bar{\Delta} \in \bH^\times$ such that $\frak{l}(\varphi(Q)) = \bar{\Delta}\partial(g(Q))$. This implies that $\bar{\Delta}$ represents a class in $\mathcal{H}_{\bK}$.

Let $D_{\bar{\Delta}} \stackrel{\rho_{\bar{\Delta}}}\To \BP^{\ell-1}$ be as defined by the construction above. It is clear that $\rho_{\bar{\Delta}} \circ g = \varphi$ on $E \setminus E[\ell]$, and that the image of this open subscheme under $g$ is contained in $D_{\bar{\Delta}}$. Since $D_{\bar{\Delta}}$ is complete, this is then true on all of $E$. Thus $g(E) \subset D_{\bar{\Delta}}$ and $\rho_{\bar{\Delta}} \circ g = \varphi$. By definition $g(E)$ is a genus one normal curve of degree $\ell$. Its homogeneous coordinate ring is generated by a $\bK$-vector space of quadrics of dimension $\ell(\ell-3)/2$ (resp. a ternary cubic for $\ell=3$). We already have such collection of quadrics (resp. a ternary cubic) which vanish on $D_{\bar{\Delta}}$, so $g(E) = D_{\bar{\Delta}}$. This proves the theorem for $\bar{\Delta}$.

Now let $\Delta \in H^\times$ be a representative for some class in $\mathcal{H}_K$, and let $D_\Delta \stackrel{\rho_\Delta}{\To} \BP^{\ell-1}$ be as given by the construction. Since $\bar{\Delta}$ represents a class in $\mathcal{H}_{\bK}$, the ratio $\Delta/\bar{\Delta}$ lies in $\partial\bF^\times$. Therefore $D_\Delta$ and $D_{\bar{\Delta}}$ are $\bK$-isomorphic as a $\BP^{\ell-1}$-schemes. It follows that $\rho_\Delta$ gives $D_\Delta$ the structure of a $\varphi$-covering of $C \subset \BP^{\ell-1}$. The theorem then follows from Lemma \ref{imisright}.
\end{proof}

\section{Computing the Selmer Set}
\label{ComputingSel}
We now specialize to the case that $K = k$ is a number field. The material of the preceding sections can be applied both to $k$ and to any completion $k_v$. To objects defined over $k$ we attach subscripts $v$ to denote the corresponding object over $k_v$ obtained by extension of scalars. We assume $C \in \Sel^{(\varphi')}(E/k)$ and is embedded in $\BP^{\ell-1}$ using the linear system corresponding to the pull back of $[0_E]$ under the covering map. Since $C$ is everywhere locally solvable, the natural map $\Pic(C) \to \Pic(\bC)^{G_k}$ is an isomorphism. We assume that the constants $c \in k^\times$ and $\beta \in H_2^\times$ (defined by (\ref{defFk})) are integral and that all coefficients involved in the linear form $\frak{l}$ are likewise integral. This can be achieved by scaling.

Functoriality of $\frak{l}$ gives rise to the following commutative diagram. \[ \xymatrix{ \Pic(C) \ar[rr]^{\frak{l}} \ar[d] && \frac{H^\times}{k^\times\partial F^\times} \ar[d]^{\prod\res_v} \\ \prod \Pic (C \otimes_kk_v) \ar[rr]^{\prod\frak{l}_v} && \prod \frac{H_v^\times}{k_v^\times\partial F_v^\times}} \] We make identifications $\Pic^1(C\otimes_kk_v) = C(k_v)$ and $\Pic^0(C\otimes_kk_v) = E'(k_v)$.

\begin{Definition}
We define the {\em algebraic $\varphi$-Selmer set of $C$} to be
\[ \Sel_{alg}^{(\varphi)}(C/k) = \left\{ \Delta \in \mathcal{H}_k \,:\, \text{ for all primes $v$, } \res_v(\Delta) \in \frak{l}(C(k_v)) \right\}\,.\] 
We define the {\em algebraic $\varphi$-Selmer group of $E'$} to be
\[ \Sel_{alg}^{(\varphi)}(E'/k) = \left\{ \Delta \in \mathcal{H}^0_k \,:\, \text{ for all primes $v$, } \res_v(\Delta) \in \frak{l}(E'(k_v)) \right\}\,.\] 
\end{Definition}

Proposition \ref{DescentOnE} shows that $\Sel_{alg}^{(\varphi)}(E'/k)$ is isomorphic to the $\varphi$-Selmer group of $E'$. The $\varphi$-Selmer set of $C$ is an affine space for the $\varphi$-Selmer group of $E'$. It is also evident that $\Sel_{alg}^{(\varphi)}(C/k)$ is an affine space over $\Sel_{alg}^{(\varphi)}(E'/k)$ (i.e. a coset inside $H^\times/k^\times\partial F^\times$). Propositions \ref{DescentMapTheorem}, \ref{DescentMapIsAffine} and Corollary \ref{Injective} show that the descent map gives an isomorphism of affine spaces \[\Phi:\Sel^{(\varphi)}(C/k) \To \Sel_{alg}^{(\varphi)}(C/k)\,.\]

To perform a $\varphi$-descent on $C$ it thus suffices to compute the algebraic $\varphi$-Selmer set. Using the method of section \ref{Geometry}, one can then construct explicit models for the elements of the $\varphi$-Selmer set as genus one normal curves of degree $\ell$ in $\BP^{\ell-1}$. An algorithm for computing the algebraic Selmer set is given in Theorem \ref{ComputeSel} below. For its statement we require the following notation.

Let $F'$ denote the splitting field of $X$. Over this field $\frak{l}_1$ splits as $(\frak{l}_x)_{x \in X}$. We say that $\frak{l}_1$ has bad reduction at a prime $v$ of $k$ if there a prime $w$ of $F'$ above $v$ and some $x \in X$ such that $\frak{l}_x \equiv 0 \mod w$.

Let $S$ be the finite set of primes of $k$ consisting of those primes such that 
\begin{enumerate}
\item $v \mid \ell \cdot c$, or
\item $C$ has bad reduction at $v$, or
\item $\frak{l}_1$ has bad reduction at $v$, or
\item $v$ ramifies in $F$.
\end{enumerate}

\begin{Theorem}
\label{ComputeSel}
The following algorithm returns a set of representatives in $H^\times$ for the algebraic $\varphi$-Selmer set of $C$.

{\sc Compute $\Sel_{alg}^{(\varphi)}(C/k)$:}
\begin{enumerate}
\item Compute a finite subset $V_1 \subset F^\times$ of representatives for the subgroup of $F^\times/k^\times F^{\times \ell}$ which is unramified outside $S$.
\item Compute the finite set 
\[ V_2 := \left\{ (\delta,\varepsilon) \in V_1\times H_2^\times\,:\, \partial_2(\delta) = \beta\varepsilon^\ell\,\right\}\,.\]
\item For each prime $v \in S$ compute
\[ G_v := \frak{l}_v(C(k_v)) \subset \left\{ (\delta,\varepsilon) \in F_v^\times\times H_v^\times\,:\, \partial_2(\delta) = \beta\varepsilon^\ell\,\right\}/k_v^\times\partial F_v^\times\,.\]
\item Return the set 
\[ V_3 := \left\{ (\delta,\eps) \in V_2 \,:\, \res_v(\delta,\eps) \in G_v, \text{ for all $v \in S$ } \right\}\,.\]
\end{enumerate}
\end{Theorem}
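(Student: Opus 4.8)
This is a by-now-standard reduction of a descent computation to computational algebraic number theory, so I would split the proof into effectivity/finiteness of Steps~1--3 and correctness, i.e.\ the identification of $V_3$ with $\Sel_{alg}^{(\varphi)}(C/k)$. For effectivity: the subgroup of $F^\times/k^\times F^{\times\ell}$ unramified outside $S$ is finite and computable from the $S$-class group and $S$-unit group of $F$ (the finiteness underlying weak Mordell--Weil), giving Step~1; Step~2 imposes the single relation $\partial_2(\delta)=\beta\varepsilon^\ell$, which is solved by extracting $\ell$-th roots in the \'etale algebra $H_2$ and leaves a finite set; and in Step~3 the set $G_v=\frak{l}_v(C(k_v))$ is finite and computable since $C(k_v)$ is compact, $\frak{l}_v$ is locally constant on it (its indeterminacy from the moving lemma being locally removable), and the ambient group is finite.

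The substance of correctness, and the step I expect to be the main obstacle, is a local lemma at the good primes: for every $v\notin S$, the local descent image $\frak{l}_v(C(k_v))$ equals exactly the set of classes $(\delta,\varepsilon)$ with $\partial_2(\delta)=\beta\varepsilon^\ell$ for which $\delta$ is unramified at $v$. The inclusion $\subseteq$ is a local computation with an integral model: since $v\notin S$ one spreads $C$ out to a smooth proper $\CO_v$-scheme, everywhere local solvability and Hensel give $C(k_v)=C(\CO_v)\ne\emptyset$, and because $\frak{l}_1$ has good reduction at $v$ and $v\nmid\ell c$ one checks (moving $[P]$ to an integral divisor supported away from $X$ and using that the reductions of the defining forms of $X$ are nonzero) that $\frak{l}_v$ lands in the unramified subgroup; all four clauses defining $S$ enter here. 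For the reverse inclusion I would argue that $\frak{l}_v(C(k_v))$ is a coset of $\frak{l}_v(E'(k_v))$ inside $\mathcal{H}_{k_v}$ (Corollary~\ref{inversePhi2} over $k_v$), that by good reduction of $E'$ the unramified part of $\frak{l}_v(E'(k_v))$ has full index, and that the unramified part of $\frak{l}_v(C(k_v))$ is non-empty because it contains $\frak{l}_v(P)$ for any $P\in C(\CO_v)$ reducing to a smooth point, so the two unramified cosets coincide; the delicate point is the bookkeeping of the $\beta$-twist, so that ``$\partial_2(\delta)=\beta\varepsilon^\ell$'' really selects the coset $\mathcal{F}_{k_v}$ attached to $C$ rather than a larger one (compare~(\ref{defFk})). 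Alternatively the reverse inclusion can be made constructive: for such $(\delta,\varepsilon)$ the genus one normal curve $D_{(\delta,\varepsilon)}$ of Section~\ref{Geometry} has good reduction at $v$, hence a $k_v$-point, which maps under $\rho_{(\delta,\varepsilon)}$ to a point of $C(k_v)$ with image $(\delta,\varepsilon)$ by Lemma~\ref{imisright}.

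Granting the local lemma the two inclusions are formal. If $\Delta\in\Sel_{alg}^{(\varphi)}(C/k)$, then $\Delta\in\mathcal{H}_k$, so we may choose representatives $\Delta=(\Delta_1,\Delta_2)\in F^\times\times H_2^\times$ with $\partial_2(\Delta_1)=\beta\Delta_2^\ell$ (evaluate the coordinate-ring identity $\partial_2(\frak{l}_1)=\beta\frak{l}_2^\ell$ at a point and use $\partial=(\partial_1,\partial_2)$ with $\partial_1(\alpha)=\alpha^\ell$); the local lemma applied at each $v\notin S$ forces $\Delta_1$ unramified outside $S$, so after multiplying $\Delta$ by a suitable element of $k^\times\partial F^\times$ we have $\Delta_1\in V_1$ and hence $(\Delta_1,\Delta_2)\in V_2$, and the remaining conditions defining $\Sel_{alg}^{(\varphi)}(C/k)$ are exactly membership in the $G_v$ for $v\in S$, so $(\Delta_1,\Delta_2)\in V_3$. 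Conversely let $(\delta,\varepsilon)\in V_3$: for $v\notin S$ the local lemma gives $\res_v(\delta,\varepsilon)\in\frak{l}_v(C(k_v))$, and for $v\in S$ this is the defining condition of $V_3$; since $\frak{l}_v(C(k_v))\subset\mathcal{H}_{k_v}$ (a $k_v$-rational point yields a $G_{k_v}$-invariant element of the coset defining $\mathcal{H}_{k_v}$), Lemma~\ref{rescheat} shows $(\delta,\varepsilon)$ represents a class in $\mathcal{H}_k$, and being everywhere locally in $\frak{l}_v(C(k_v))$ it lies in $\Sel_{alg}^{(\varphi)}(C/k)$. Hence $V_3$ is a set of representatives for $\Sel_{alg}^{(\varphi)}(C/k)$, as claimed.
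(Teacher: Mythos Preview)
Your outline is essentially the paper's: reduce to a characterisation of $\Sel_{alg}^{(\varphi)}(C/k)$ by an unramifiedness condition outside $S$ together with the local images at $v\in S$, and handle the good primes via a ``local lemma'' asserting $\frak{l}_v(C(k_v))=$ (unramified classes). The forward inclusion of your local lemma, and with it the direction $\Sel_{alg}^{(\varphi)}(C/k)\subset V_3$, is fine.

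There is, however, a genuine gap in the converse $V_3\subset\Sel_{alg}^{(\varphi)}(C/k)$. Your local lemma, as stated, asserts that for $v\notin S$ the image $\frak{l}_v(C(k_v))$ equals the unramified classes in the \emph{large} set $\{(\delta,\varepsilon):\partial_2(\delta)=\beta\varepsilon^\ell\}/k_v^\times\partial F_v^\times$. What your argument~(a) actually proves (correctly) is only that $\frak{l}_v(C(k_v))$ equals the unramified part of the \emph{smaller} coset $\mathcal{H}_{k_v}$; you flag the discrepancy yourself (``the delicate point is the bookkeeping of the $\beta$-twist''), but do not resolve it, and the inclusion in~(\ref{defFk}) can be strict. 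Your alternative~(b) has the same defect: Theorem~\ref{InversePhi} only builds $D_\Delta$ for $\Delta\in\mathcal{H}_K$, so invoking it already presupposes $(\delta,\varepsilon)\in\mathcal{H}_{k_v}$. Since you apply the local lemma at $v\notin S$ \emph{before} you know $(\delta,\varepsilon)\in\mathcal{H}_k$, the argument is circular as written.

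The fix is a simple reordering, and it is exactly what the paper does: observe that $S\ne\emptyset$ (it contains the primes above $\ell$), so Step~(4) already forces $\res_v(\delta,\varepsilon)\in G_v\subset\mathcal{H}_{k_v}$ for some $v$; now Lemma~\ref{rescheat} gives $(\delta,\varepsilon)\in\mathcal{H}_k$ globally. With this in hand, for every $v\notin S$ one has $\res_v(\delta,\varepsilon)\in\mathcal{H}_{k_v}$, and your argument~(a) (i.e.\ $\frak{l}_v(C(k_v))=$ unramified part of $\mathcal{H}_{k_v}$, via $\frak{l}_v(E'(k_v))=\HH^1_{nr}$ and the affine structure) finishes the job. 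In short: deduce membership in $\mathcal{H}_k$ from the bad primes first, and only then invoke the local lemma at good primes in its provable form.
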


For the most part the steps in this algorithm are typical of explicit descents. The first step can be achieved by computing certain $S$-unit and class group information in $F$ (see for example \cite[Proposition 12.8]{PoonenSchaefer}). The second step requires only extracting $\ell$-th roots in $H_2^\times$. By (\ref{defFk}) $\mathcal{H}_{k_v}$ is contained in the set 
\[ \left\{ (\delta,\varepsilon) \in F_v^\times\times H_v^\times\,:\, \partial_2(\delta) = \beta\varepsilon^\ell\,\right\}/k_v^\times\partial F_v^\times\,. \] 
By Hensel's lemma this set is finite and $\frak{l}_v:C(k_v) \to \mathcal{H}_{k_v}$ is locally constant. The sizes of the local images in step (3) can be determined a priori. To compute $G_v$ in practice we compute the images of random $k_v$-points (given up to sufficient precision) until their images generate a sufficiently large space. For further details we refer the reader to the discussion of the analogous situations considered in \cite{SchaeferStoll,CreutzThesis}. The fourth step can be reduced to linear algebra over $\F_\ell$.

The proof of Theorem \ref{ComputeSel} will make use of the next few lemmas.

\begin{Lemma}
\label{unrKernel}
Suppose $v$ is a prime of $k$ that does not divide $\ell$ and does not ramify in $F$. Then an element of $\mathcal{H}_{k_v}^0$ is unramified if and only if its image in $\mathcal{F}_{k_v}^0$ is unramified.
\end{Lemma}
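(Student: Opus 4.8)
Throughout I take \emph{unramified} to mean ``trivial after base change to the maximal unramified extension $k_v\unr$ of $k_v$''; for the multiplicative subquotients this says that the image of the given element in the corresponding object built over $k_v\unr$ --- write $\mathcal{H}^0_{k_v\unr}$, resp. $\mathcal{F}^0_{k_v\unr}$ --- vanishes. (One should first check this agrees with whatever formulation of ``unramified'' is in force for these subquotients; see the last paragraph.) With this convention the implication ``$\delta$ unramified $\Rightarrow \pr_1(\delta)$ unramified'' is immediate from functoriality of restriction and uses no hypothesis on $v$. So the plan is to prove the converse, and I would do so by showing that $\pr_1\colon \mathcal{H}^0_{k_v\unr}\to \mathcal{F}^0_{k_v\unr}$ is \emph{injective}.

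To get that injectivity I would apply the constructions of Section \ref{Cohomology} --- which were carried out there for an arbitrary perfect base field --- with $K = k_v\unr$, and show the term $\kappa_{k_v\unr}$ in the exact top row of diagram (\ref{diagram2}) is zero. First, the hypotheses force every Galois module in diagram (\ref{diagram1}) to become \emph{trivial} over $k_v\unr$: since $v \nmid \ell$ the module $\mu_\ell$ is unramified at $v$; since $v$ is unramified in $F$, each field factor of $F \otimes_k k_v$ embeds in $k_v\unr$, so the inertia group $G_{k_v\unr} = I_v$ acts trivially on $X$, hence on $\mu_\ell(\bF) = \Map(X,\mu_\ell)$; and $\Aff(X,\mu_\ell)$, $E[\varphi]$ and $\partial_2\mu_\ell(\bF)$ are sub- or quotient modules of $\mu_\ell(\bF)$. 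All of these have $\ell$-power order, prime to the residue characteristic. With trivial coefficients the expression $\kappa_K \cong (\partial_2\mu_\ell(\bF))^{G_K}\big/\partial_2\bigl((\mu_\ell(\bF)/\mu_\ell)^{G_K}\bigr)$ from diagram (\ref{diagram2}) then becomes $\partial_2\mu_\ell(\bF)\big/\partial_2(\mu_\ell(\bF)/\mu_\ell)$, which is $0$ because $\partial_2\colon \mu_\ell(\bF)/\mu_\ell \to \partial_2\mu_\ell(\bF)$ is surjective (bottom row of (\ref{diagram1})). Exactness of the top row of (\ref{diagram2}) now makes $\pr_1\colon \mathcal{H}^0_{k_v\unr} \to \mathcal{F}^0_{k_v\unr}$ an isomorphism.

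To conclude I would use the commutative square
\[
\xymatrix{
\mathcal{H}^0_{k_v} \ar[d] \ar[r]^{\pr_1} & \mathcal{F}^0_{k_v} \ar[d] \\
\mathcal{H}^0_{k_v\unr} \ar[r]^{\pr_1} & \mathcal{F}^0_{k_v\unr}
}
\]
with vertical restriction maps, commuting by functoriality. If $\delta \in \mathcal{H}^0_{k_v}$ has $\pr_1(\delta)$ unramified, then the image of $\pr_1(\delta)$ in $\mathcal{F}^0_{k_v\unr}$ is $0$; by commutativity it equals $\pr_1$ of the restriction of $\delta$ to $\mathcal{H}^0_{k_v\unr}$; by the injectivity just established that restriction is $0$; hence $\delta$ is unramified. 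Together with the easy direction this proves the lemma.

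The real obstacle is not a deep input but careful matching of definitions: one must confirm that ``$v$ unramified in $F$'' is exactly the triviality of the $I_v$-action on $X$, and --- more delicately --- that the \emph{ad hoc} notion of ``unramified'' used for $\mathcal{H}^0_{k_v} \subset H^\times/k_v^\times\partial F^\times$ and $\mathcal{F}^0_{k_v} \subset F^\times/k_v^\times F^{\times\ell}$ agrees with the cohomological ``dies over $k_v\unr$'' --- in particular, that it does not matter whether one asks for \emph{some} or for \emph{every} lift to an honest cohomology class to be unramified. This last point follows from surjectivity of $\HH^1(k_v,\mu_\ell)\to \HH^1(I_v,\mu_\ell)$ together with the inflation--restriction sequence, both available precisely because $\mu_\ell$ is unramified and $v\nmid\ell$; that is exactly where the hypotheses on $v$ enter, and where a careless argument would fail.
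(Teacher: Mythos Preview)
Your proof is correct and follows essentially the same route as the paper's: both show $\kappa_{k_v\unr}=0$ (you via surjectivity of $\partial_2$ on the bottom row of (\ref{diagram1}), the paper via $\mu_\ell\subset\ker\partial_2$---the same fact), deduce that $\pr_1$ is injective over $k_v\unr$, and then chase the commutative square of restriction maps. The paper simply \emph{defines} the unramified subgroups as the kernels of the vertical restriction maps, so your careful last paragraph about matching the ad hoc and cohomological notions of ``unramified'' is extra scrupulousness rather than a divergence in strategy; likewise, your direct argument that $I_v$ acts trivially on $X$ (via the splitting of $F\otimes_k k_v$) is cleaner than the paper's appeal to N\'eron--Ogg--Shafarevich.
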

\begin{proof}
Let $k_v^{nr}$ be the maximal unramified extension of $k_v$. Recall that the kernel of the map $\pr_1:\mathcal{H}_{k_v^{nr}}^0 \to \mathcal{F}_{k_v^{nr}}^0$ is denoted by $\kappa_{k_v^{nr}}$. If $v$ is unramified in $F$ and does not divide $\ell$, then all points of $X$ are defined over $k_v^{nr}$. This follows from the criterion of N\'eron-Ogg-Shafarevich. We claim that $\kappa_{k_v^{nr}}$ is then trivial. Indeed, since the action of the inertia group on $X$ is trivial, $\kappa_{k_v^{nr}}$ reduces to the quotient of $\partial_2\mu_\ell(\bF_v)$ by $\partial_2(\mu_\ell(\bF_v)/\mu_\ell)$. But $\mu_\ell(\bF_v)$ and $\mu_\ell(\bF_v)/\mu_\ell$ have the same image since $\mu_\ell \subset \ker\partial_2$.

Now consider the following diagram with exact rows
\[\xymatrix{ 0 \ar[r]& \kappa_{k_v} \ar[d] \ar[r]& \mathcal{H}_{k_v}^0 \ar[r]\ar[d]& \mathcal{F}_{k_v}^0 \ar[r]\ar[d] & 0\,\,\, \\
0 \ar[r]& \kappa_{k_v^{nr}}  \ar[r]& \mathcal{H}_{k_v^{nr}}^0 \ar[r]& \mathcal{F}_{k_v^{nr}}^0 \ar[r] & 0\,. }\] By definition, the unramified subgroups are the kernels of these vertical maps. So it will suffice to show that $\mathcal{H}_{k_v^{nr}}^0\rightarrow \mathcal{F}_{k_v^{nr}}^0$ is injective. This follows from exactness since, as we have just seen, the lower-left term is trivial.
\end{proof}

\begin{Lemma}
\label{Pic0unr}
If $v \nmid \ell$, and $\ell$ does not divide the product of the Tamagawa numbers of $E$ and $E'$ at $v$, then $\frak{l}_v(E'(k_v)) \subset \mathcal{H}^0_{k_v} \subset \HH^1(k_v,E[\varphi])$ is equal to the unramified subgroup, \[\HH^1_{nr}(k,E[\varphi]) := \ker\left( \HH^1(k_v,E[\varphi]) \to \HH^1(k_v^{nr},E[\varphi]) \right)\,.\]
\end{Lemma}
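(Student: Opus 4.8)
The plan is to translate the statement into one about the Kummer connecting map and then prove the two inclusions separately, each using one of the two Tamagawa hypotheses. By Proposition~\ref{DescentOnE}, the composite $\Phi_0^{-1}\circ\frak{l}_v$ carries $\frak{l}_v(E'(k_v))$ isomorphically onto the image of the connecting homomorphism $\delta_\varphi\colon E'(k_v)\to\HH^1(k_v,E[\varphi])$ attached to the Kummer sequence $0\to E[\varphi]\to E\xrightarrow{\varphi}E'\to 0$, so it suffices to prove $\delta_\varphi(E'(k_v))=\HH^1_{nr}(k_v,E[\varphi])$ as subgroups of $\HH^1(k_v,E[\varphi])$. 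I would begin from the local descent sequence $0\to E'(k_v)/\varphi E(k_v)\xrightarrow{\delta_\varphi}\HH^1(k_v,E[\varphi])\to\HH^1(k_v,E)[\varphi]\to 0$ together with its base change to $k_v^{nr}$; taking kernels of the restriction maps yields the left-exact sequence
\[
0\to \delta_\varphi(E'(k_v))\cap\HH^1_{nr}(k_v,E[\varphi])\to \HH^1_{nr}(k_v,E[\varphi])\to \HH^1_{nr}(k_v,E)[\varphi]\,,
\]
where $\HH^1_{nr}(k_v,E)=\HH^1(k_v^{nr}/k_v,E(k_v^{nr}))$ by inflation--restriction.

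For the inclusion $\delta_\varphi(E'(k_v))\subseteq\HH^1_{nr}(k_v,E[\varphi])$ I would use $\ell\nmid c_v(E')$. It is enough to show that every $P'\in E'(k_v)$ is the image under $\varphi$ of a point of $E(k_v^{nr})$, since then the cocycle $\sigma\mapsto P^\sigma-P$ representing $\delta_\varphi(P')$ is trivial on the inertia group. Working with the N\'eron models over $\mathcal{O}_{k_v^{nr}}$: as $\deg\varphi=\ell$ is a unit in $\mathcal{O}_{k_v}$, the map $\varphi$ induces an isomorphism on formal groups and reduces to an isogeny of the connected special fibres which is surjective on points, so a diagram chase identifies $\coker\bigl(\varphi\colon E(k_v^{nr})\to E'(k_v^{nr})\bigr)$ with a quotient of the geometric component group $\Phi_v(E')(\overline{\F}_v)$. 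The image of $E'(k_v)$ in this cokernel is a quotient of the arithmetic component group $\Phi_v(E')(\F_v)$, hence has order dividing $c_v(E')$, and it is killed by $\ell$ because $[\ell]=\varphi\circ\varphi'$; under the hypothesis $\ell\nmid c_v(E')$ it must therefore vanish, which gives the claim.

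For the reverse inclusion I would use $\ell\nmid c_v(E)$: by the displayed sequence it suffices that $\HH^1_{nr}(k_v,E)[\varphi]=0$. From the exact sequence of $\Gal(k_v^{nr}/k_v)=\hat\Z$-modules $0\to E^0(k_v^{nr})\to E(k_v^{nr})\to\Phi_v(E)(\overline{\F}_v)\to 0$, the vanishing of $\HH^1$ of $\hat\Z$ on $E^0(k_v^{nr})$ (additive Hilbert~90 for the formal group, Lang's theorem for the connected special fibre), and the fact that $\hat\Z$ has cohomological dimension one, one obtains $\HH^1_{nr}(k_v,E)\cong\HH^1(\hat\Z,\Phi_v(E)(\overline{\F}_v))\cong\coker(\Fr-1)$ on the finite group $\Phi_v(E)(\overline{\F}_v)$. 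Since $\ell\nmid c_v(E)=\#\Phi_v(E)(\overline{\F}_v)^{\Fr}$, the endomorphism $\Fr-1$ is injective, hence bijective, on the $\ell$-primary part, so $\HH^1_{nr}(k_v,E)$ has trivial $\ell$-part; as $\HH^1(k_v,E)[\varphi]$ is $\ell$-torsion, this forces $\HH^1_{nr}(k_v,E)[\varphi]=0$. Combining the two inclusions gives $\delta_\varphi(E'(k_v))=\HH^1_{nr}(k_v,E[\varphi])$, as desired.

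The main obstacle is not a single deep input but the component-group bookkeeping in the first inclusion: one must keep track of the fact that it is the \emph{geometric} component group, not the arithmetic one measured by $c_v$, that appears in the cokernel, and then check that the relevant subquotient is nevertheless annihilated by $\ell$. Alternatively, one could obtain equality by a counting argument, combining the local index formula $[E'(k_v):\varphi E(k_v)]=\#E[\varphi](k_v)\cdot c_v(E')/c_v(E)$ (proved by the same N\'eron-model analysis over $\mathcal{O}_{k_v}$, or extracted from the formula of Cassels in \cite{CasselsVIII}) with $\#\HH^1_{nr}(k_v,E[\varphi])=\#\HH^0(k_v,E[\varphi])$ and just one of the two inclusions above; the two approaches are essentially interchangeable.
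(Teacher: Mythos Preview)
Your proof is correct and follows the same overall route as the paper: both reduce, via Proposition~\ref{DescentOnE}, to the statement that the image of the Kummer connecting map $\delta_\varphi\colon E'(k_v)\to\HH^1(k_v,E[\varphi])$ coincides with the unramified subgroup. The paper then simply cites \cite[Lemma~3.1]{Schaefer} and \cite[Lemma~3.1]{SchaeferStoll} for this fact, whereas you supply a self-contained N\'eron-model argument (formal groups, Lang's theorem, and component-group bookkeeping) that is essentially a proof of those cited lemmas; so the difference is one of exposition rather than strategy.
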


\begin{proof}
Proposition \ref{DescentOnE} shows that $\frak{l}_v(E'(k_v))$ equals the image of the connecting homomorphism $E'(k_v) \to \HH^1(k_v,E[\varphi])$, which is itself equal to the unramified subgroup by \cite[Lemma 3.1]{Schaefer} and \cite[Lemma 3.1]{SchaeferStoll}.
\end{proof}

\begin{Lemma}
For $v \notin S$, $\frak{l}_v(C(k_v))$ and $\pr_1\frak{l}_v(C(k_v))$ are unramified in $H_v^\times/k_v^\times\partial F_v^\times$ and $F_v^\times/k_v^\times F_v^{\times\ell}$, respectively.
\end{Lemma}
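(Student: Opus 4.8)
The plan is to reduce everything to Lemma~\ref{Pic0unr} together with the facts, recorded in this section, that $\frak{l}$ is a homomorphism on $\Pic(C)$ and that on $\Pic^0$ it realizes the connecting map of Proposition~\ref{DescentOnE}. First I would check that $v\notin S$ supplies the hypotheses of Lemma~\ref{Pic0unr}: condition~(1) gives $v\nmid\ell$, while condition~(2) says $C$ has good reduction at $v$, so that $E'=\Jac(C)$ and the $k$-isogenous curve $E$ also have good reduction at $v$; hence their Tamagawa numbers at $v$ are $1$ and $\ell\nmid c_v(E)c_v(E')$. Consequently $\frak{l}_v\bigl(E'(k_v)\bigr)$ is exactly the unramified subgroup $\HH^1_{nr}(k_v,E[\varphi])$ of $\mathcal{H}^0_{k_v}$. (Condition~(4), that $v$ is unramified in $F$, additionally puts us in the setting of Lemma~\ref{unrKernel}.)

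Next comes the standard difference argument. Since $C$ is everywhere locally solvable, $C(k_v)\ne\emptyset$; fix $P,P'\in C(k_v)$. Under the identifications $\Pic^1(C\otimes_kk_v)=C(k_v)$ and $\Pic^0(C\otimes_kk_v)=E'(k_v)$, the class $[P]-[P']$ has degree $0$, hence lies in $E'(k_v)$, and as $\frak{l}_v$ is a homomorphism on $\Pic(C\otimes_kk_v)$,
\[ \frak{l}_v(P)\cdot\frak{l}_v(P')^{-1}=\frak{l}_v\bigl([P]-[P']\bigr)\in\frak{l}_v\bigl(E'(k_v)\bigr), \]
which is the unramified subgroup of $\mathcal{H}^0_{k_v}$ by the previous paragraph. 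So $\frak{l}_v(C(k_v))$ is contained in a single coset of that subgroup inside $\mathcal{H}_{k_v}$, and to finish it suffices to see that this is the unramified coset, i.e.\ that one value $\frak{l}_v(P_0)$ is represented by a unit. I would check this over $k_v^{nr}$ (being unit-represented is unaffected by this unramified base change, since $v$ is unramified in $H$): over $\CO_{k_v^{nr}}$ the good model of $C$ is a trivial torsor under the good reduction of $E'$, the closure of $X$ is finite \'etale, and by conditions~(2) and~(3) the integral form $\frak{l}$ reduces correctly there; translating $P_0$ by a torsion point $Q\in E'(k_v^{nr})$ of order prime to $\ell$ (which exists because $E'$ has good reduction) so that the reduction of $P_0+Q$ avoids the reduction of $X$, the value $\frak{l}_v(P_0+Q)$ is visibly a unit, while $\frak{l}_v(P_0+Q)\cdot\frak{l}_v(P_0)^{-1}=\frak{l}_v(Q)$ is trivial since the connecting map of Proposition~\ref{DescentOnE} annihilates prime-to-$\ell$ torsion. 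Thus $\frak{l}_v(C(k_v))$ is unramified.

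Finally, $\pr_1$ is induced by the fixed algebra projection $H\to F$, hence compatible with restriction to $k_v^{nr}$; applying it to the statement just proved --- or, if one prefers, invoking Lemma~\ref{unrKernel} for the differences --- shows that $\pr_1\frak{l}_v(C(k_v))$ is unramified in $F_v^\times/k_v^\times F_v^{\times\ell}$. The step I expect to be the main obstacle is the middle one: carefully matching the meaning of ``unramified'' in Lemmas~\ref{unrKernel} and~\ref{Pic0unr} (kernels of restriction to $k_v^{nr}$ in $\mathcal{H}^0_{k_v}$ and $\mathcal{F}^0_{k_v}$) with ``represented by a unit'' in the ambient groups $H_v^\times/k_v^\times\partial F_v^\times$ and $F_v^\times/k_v^\times F_v^{\times\ell}$, checking this is detected after base change to $k_v^{nr}$, and pinning down which coset of the unramified subgroup $\frak{l}_v(C(k_v))$ is; the underlying geometry (good reduction of $C$ and $E'$, \'etaleness of the reduction of $X$, vanishing of $\delta_\varphi$ on prime-to-$\ell$ torsion) is routine and is treated in the analogous situations of \cite{SchaeferStoll,CreutzThesis}.
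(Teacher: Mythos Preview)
Your approach is sound but takes a genuinely different route from the paper's. The paper proceeds far more directly: after reducing from $H$ to $F$ via Lemma~\ref{unrKernel}, it shows for an \emph{arbitrary} $P\in C(k_v)\setminus X$ (in primitive integral coordinates) that $\frak{l}_1(P)$ has valuation divisible by $\ell$ at every prime $\frak{w}$ of the splitting field $F'$ above $v$, by a bare valuation computation. The norm relation $N_{F/K}(\frak{l}_1)=c\,h^\ell$ together with $v\nmid c$ forces $\sum_{x\in X}\ord_\frak{w}(\frak{l}_x(P))\equiv 0\pmod\ell$; good reduction of $C$ and of $\frak{l}_1$ (conditions~(2) and~(3)) guarantee that the points $x\bmod\frak{w}$ are distinct and that each $\frak{l}_x$ reduces to the hyperosculating hyperplane there, so at most one summand can be positive; hence every summand is $\equiv 0\pmod\ell$.

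This uses neither Lemma~\ref{Pic0unr} nor the difference argument, and it sidesteps exactly the issue you flag about reconciling the cohomological and valuation-theoretic meanings of ``unramified'' for elements of the coset $\mathcal{H}_{k_v}$ rather than the group $\mathcal{H}_{k_v}^0$. Your route---reducing to $\frak{l}_v(E'(k_v))$ via differences and Proposition~\ref{DescentOnE}, then anchoring one value by a prime-to-$\ell$ torsion translation over $k_v^{nr}$---is more conceptual and does work, but the paper's direct computation is shorter, avoids the base change to $k_v^{nr}$ entirely, and makes explicit why the constant $c$ and condition~(3) on $\frak{l}_1$ belong in the definition of $S$.
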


\begin{proof}
Let $v \notin S$ and let $F'$ denote the splitting field of $X$. By Lemma \ref{unrKernel} it suffices to prove the statement for $\pr_1\frak{l}_v(C(k_v))$. Let $P \in  C(k_v)\setminus X$ be given by primitive integral coordinates. Then $\pr_1 \frak{l}_v(P) = \frak{l}_{1}(P) \in F_v^\times$. The algebra $F_v$ splits as a product of finite extensions of $k_v$. Since $v \nmid \ell$ it is enough to show that $\frak{l}_{1}(P)$ has valuation divisible by $\ell$ in each factor. If $K_w$ is some factor, we have an unramified tower of field extensions $k_v \subset K_w \subset F'_\frak{w}$, where $\frak{w} | w | v$ is some prime of $F'$. Over $F'$, $\frak{l}_1$ splits as a tuple of linear forms $\frak{l}_x$ with coefficients in $F'$, each defining the hyperosculating plane to $C$ at $x \in X$. Since the extensions are unramified it will suffice to show that
\[ \ord_{\frak{w}}(\frak{l}_x(P)) \equiv 0 \mod \ell\,, \text{ for each $x \in X$.} \] For this one makes use of the norm condition corresponding to the constant $c \in K^\times$. We have \[ \sum_{x \in X} \ord_\frak{w}(\frak{l}_x(P)) = \ord_\frak{w}\left( \prod_{x\in X} \frak{l}_x(P)\right) \equiv \ord_{\frak{w}} (N_{F/K}(\frak{l}_1(P))) \equiv \ord_\frak{w}(c) \equiv 0 \mod \ell\,.\]

It will suffice to show that at most one summand can be nonzero, for then all must be divisible by $\ell$. By assumption the special fiber of (an integral model for) $C \otimes_{k_v}F'_\frak{w}$ is nonsingular, so the points $x \mod \frak{w}$ are distinct. Each linear form $\frak{l}_x$ reduces mod $\frak{w}$ to define the hyperosculating plane to the special fiber at the point $x \mod \frak{w}$. So $\ord_{\frak{w}}(\frak{l}_x(P)) > 0$ if and only if $P \equiv x \mod \frak{w}$. Since the $x \in X$ have distinct reductions, this can occur for at most one $x$.
\end{proof}

\begin{Proposition}
\label{Selcontains}
\[\Sel_{alg}^{(\varphi)}(C/k) = \left\{ \Delta \in \mathcal{H}_k \,:\, \begin{array}{c} \text{ $\Delta$ is unramified outside $S$, }\\
 \forall v \in S\,,\, \res_v(\Delta) \in \frak{l}_v(C(k_v)) \end{array} \right\}\,.\]
 \end{Proposition}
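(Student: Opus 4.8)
The plan is to establish the two inclusions between $\Sel_{alg}^{(\varphi)}(C/k)$ and the set on the right hand side, the only substantive point being what the local conditions do at primes outside $S$.

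For the inclusion $\subseteq$, let $\Delta \in \Sel_{alg}^{(\varphi)}(C/k)$. By definition $\res_v(\Delta) \in \frak{l}_v(C(k_v))$ for every prime $v$, in particular for $v \in S$. For $v \notin S$ the lemma immediately preceding this proposition shows that every class in $\frak{l}_v(C(k_v))$ is unramified in $H_v^\times/k_v^\times\partial F_v^\times$, so $\res_v(\Delta)$ is unramified at all $v \notin S$, i.e. $\Delta$ is unramified outside $S$.

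For the reverse inclusion, suppose $\Delta \in \mathcal{H}_k$ is unramified outside $S$ and $\res_v(\Delta) \in \frak{l}_v(C(k_v))$ for all $v \in S$, and fix a prime $v \notin S$; we must show $\res_v(\Delta) \in \frak{l}_v(C(k_v))$. Since $v \notin S$, the curve $C$ has good reduction at $v$, hence so do the elliptic curve $E'$ under which $C$ is a torsor (its Jacobian) and the isogenous curve $E$; in particular $v \nmid \ell$ and the Tamagawa numbers of $E$ and $E'$ at $v$ equal $1$. The smooth genus one special fibre of an integral model of $C$ has a point over the finite residue field by the Weil bound, which lifts by Hensel's lemma, so $C(k_v) \ne \emptyset$ and $\frak{l}_v(C(k_v))$ is a nonempty coset of the subgroup $\frak{l}_v(E'(k_v)) \subseteq \mathcal{H}^0_{k_v}$ inside the coset $\mathcal{H}_{k_v}$. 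By Lemma \ref{Pic0unr}, $\frak{l}_v(E'(k_v))$ equals the unramified subgroup $\HH^1_{nr}(k_v,E[\varphi])$; since this whole subgroup lies in $\mathcal{H}^0_{k_v}$, it is (matching the two notions of ``unramified'' via Lemmas \ref{unrKernel}, \ref{Pic0unr} and the functoriality in the base field of the identification in Lemma \ref{kerUpsH0}) exactly the subgroup of unramified classes of $\mathcal{H}^0_{k_v}$. Hence the set of unramified classes of $\mathcal{H}_{k_v}$ is, being nonempty, a single coset of $\frak{l}_v(E'(k_v))$; it contains $\frak{l}_v(C(k_v))$ by the preceding lemma, and as both are cosets of $\frak{l}_v(E'(k_v))$ sharing a common element they coincide. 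Since $\res_v(\Delta)$ is unramified (because $\Delta$ is unramified outside $S$ and $v \notin S$), it therefore lies in $\frak{l}_v(C(k_v))$, as required.

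I expect the coset-matching at $v \notin S$ to be the only real obstacle: one needs the nonemptiness of $C(k_v)$ so that $\frak{l}_v(C(k_v))$ has the cardinality of a full coset of $\frak{l}_v(E'(k_v))$, Lemma \ref{Pic0unr} so that $\frak{l}_v(E'(k_v))$ realises the entire unramified subgroup, and a careful identification of ``unramified'' for classes of $\mathcal{H}_{k_v}$ with ``unramified'' for the corresponding classes of $\HH^1(k_v,E[\varphi])$. Everything else reduces to the definition of the algebraic $\varphi$-Selmer set and the lemma established just before the proposition.
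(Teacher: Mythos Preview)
Your proof is correct and follows essentially the same approach as the paper's. The paper phrases the reverse inclusion slightly more directly---pick any $Q \in C(k_v)$, observe $\res_v(\Delta)/\frak{l}_v(Q)$ is an unramified element of $\mathcal{H}^0_{k_v}$, apply Lemma~\ref{Pic0unr} to write it as $\frak{l}_v(P)$ for some $P \in E'(k_v)$, then use affinity to get $\res_v(\Delta) = \frak{l}_v(Q+P)$---but this is exactly your coset-matching argument unpacked. One small simplification: the nonemptiness of $C(k_v)$ is already part of the standing hypothesis $C \in \Sel^{(\varphi')}(E/k)$ at the start of the section, so you need not invoke the Weil bound and Hensel.
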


\begin{proof}
Let $Z$ be the set in the proposition. It is clear from the preceding lemmas that $\Sel_{alg}^{(\varphi)}(C/k)$ is contained in $Z$. For the other containment, let $\Delta \in Z$ and $v \notin S$. We need to show that $\res_v(\Delta) \in \frak{l}_v(C(k_v))$. For any $\Delta' = \frak{l}_v(Q) \in \frak{l}_v(C(k_v))$, the ratio $\Delta/\Delta'$ is unramified and lies in $\mathcal{H}_{k_v}^0$. It follows from Lemma \ref{Pic0unr} that $\Delta/\Delta' = \frak{l}_v(P)$ for some $P \in E'(k_v)$. Since the descent map is affine we have $\Delta = \frak{l}_v(Q + P) \in \frak{l}_v(C(k_v))$.
\end{proof}

\begin{proof}[Proof of Theorem \ref{ComputeSel}]
Suppose $\Delta \in V_3$. Step (4) ensures that for all $v \in S$, $\res_v(\Delta) \in \frak{l}_vC(k_v)$. Moreover, since $S$ is not empty this also ensures that $\Delta$ represents a class in $\mathcal{H}_k$ (cf. Lemma \ref{rescheat}). By Lemma \ref{unrKernel}, step (1) ensures that $\Delta$ is unramified outside $S$. Proposition \ref{Selcontains} then shows that $\Delta$ represents a class in $\Sel_{alg}^{(\varphi)}(C/k)$. This shows that the algorithm computes a subset of the algebraic $\varphi$-Selmer set. The reverse containment follows from the fact (see (\ref{defFk})) that $\mathcal{H}_k$ is a subset of $\left\{ (\delta,\varepsilon) \in F^\times\times H_2^\times\,:\, \partial_2(\delta)=\beta\varepsilon^\ell \right\} /k^\times \partial F^{\times}$.
\end{proof}

\subsection{The Fake Selmer Set}
In practice it is often easier to compute the image of the algebraic Selmer set under the projection $\pr_1:\frac{H^\times}{k^\times\partial F^\times} \to \frac{F^\times}{k^\times F^{\times \ell}}$. Recall that the composition $\pr_1 \circ \frak{l} : \Pic(C) \to \frac{F^\times}{k^\times F^{\times \ell}}$ is given by $\frak{l}_1$.

\begin{Definition}
We define the {\em fake $\varphi$-Selmer set of $C$} to be
\[ \Sel_{fake}^{(\varphi)}(C/k) = \left\{ \delta \in \mathcal{F}_k  \,:\, \text{ for all primes $v$, } \res_v(\delta) \in \frak{l}_{1,v}(C(k_v)) \right\}\,.\]
\end{Definition}
The projection $\pr_1$ induces a map \[ \pr_1:\Sel_{alg}^{(\varphi)}(C/k) \To \Sel^{(\varphi)}_{fake}(C/k)\,.\] In general this can fail to be injective or surjective (it is possible to construct examples of both phenomena). Nevertheless, the fake Selmer set often yields useful information on the genuine Selmer set. For example, if the fake Selmer set is empty, then so is the Selmer set.

From (\ref{defFk}) it follows that the fake $\varphi$-Selmer set is contained in the sets
\begin{align*}
&\left\{ \delta \in \frac{F^\times}{k^\times F^{\times\ell}} \,:\, \begin{array}{c} \text{ $\delta$ is unramified outside $S$, }\\ \text{ $\partial_2(\delta) \in \beta H_2^{\times\ell}$ }\\
\forall v \in S\,,\, \res_v(\delta) \in {l}_{1,v}(C(k_v)) \end{array} \right\}\\
\subset &\left\{ \delta \in \frac{F^\times}{k^\times F^{\times\ell}} \,:\, \begin{array}{c} \text{ $\delta$ is unramified outside $S$, }\\ \text{ $N_{F/K}(\delta) \in ck^{\times\ell}$ }\\
\forall v \in S\,,\, \res_v(\delta) \in \frak{l}_{1,v}(C(k_v)) \end{array} \right\}\,.
\end{align*}
In particular, if either of these sets is empty, then so is the $\varphi$-Selmer set of $C$.

\section{Examples}

We have implemented the algorithm of Theorem \ref{ComputeSel} in the computer algebra system {\tt Magma} \cite{magma}, for $\ell = 3,5,7$ and $k = \Q$. This was used for all computations below.

\subsection{Proof of Theorem \ref{MainTheorem}}
As described in the introduction, the proof of Theorem \ref{MainTheorem} has been reduced to determination of $\ell$-primary parts of $\Sha$ for the $11$ isogeny classes in table \ref{cases}. For each the situation is similar. All are of rank $0$, the mod-$\ell$ Galois representation is reducible, and the optimal curve $E$ has a $\Q$-rational $\ell$-torsion point. The quotient by the cyclic subgroup generated by this point gives an $\ell$-isogeny $\varphi : E \rightarrow E'$. It is a well known result of Cassels \cite{CasselsVIII} that the $\ell$-part of the BSD conjecture is invariant under isogeny, so it suffices to determine the order of $\Sha(E/\Q)[\ell]$. The order predicted by the conjecture is $1$, while that of $\Sha(E'/\Q)$ is $\ell^2$.

Since $E(\Q) \simeq \Z/\ell\Z$ one can perform $\varphi$- and $\varphi'$-descents on $E'$ and $E$ more or less by hand using the method described in \cite{FisherThesis,FisherJEMS}. One gets that $\Sel^{(\varphi)}(E'/\Q) = 0$ while $\Sel^{(\varphi')}(E/\Q)$ is presented as a $3$-dimensional subgroup of $\Q^{\times}/\Q^{\times \ell}$. This establishes that $\Sha(E'/\Q)[\ell] \simeq \Z/\ell\Z \times \Z/\ell\Z$, however it is not sufficient to show that $\Sha(E/\Q)[\ell] = 0$ or, what is equivalent, that $\Sha(E'/\Q)[\ell^\infty] = \Sha(E'/\Q)[\ell]$.

The description in \cite{FisherThesis,FisherJEMS} also gives models for the elements of $\Sha(E'/\Q)[\varphi']$ as everywhere locally solvable genus one normal curves of degree $\ell$. A second $\varphi$-descent on any of the nontrivial elements $C$ gives $\Sel^{(\varphi)}(C/\Q) = \emptyset$ (in all $11$ cases it was in fact sufficient to compute fake Selmer sets). From this it follows that $\varphi(\Sha(E/\Q)[\ell]) = 0$ and hence that that $\Sha(E/\Q)[\ell] = 0$. Two explicit examples are given below; the computations for the others are similar. 

\begin{Remark}
Since the order of $\Sha(E'/\Q)[\varphi']/\varphi(\Sha(E/\Q)[\ell])$ must be a square, it suffices to do the second $\varphi$-descent on just one nontrivial element of $\Sha(E'/\Q)[\varphi']$.
\end{Remark}

\subsection{The pair $(1950y,5)$}
Let $E$ denote the elliptic curve labeled 1950y1 in Cremona's database. The Mordell-Weil group is cyclic of order $5$. Denote the corresponding isogeny by $\varphi:E \to E'$. The method for $\varphi$ and $\varphi'$-descents described in  \cite{FisherThesis,FisherJEMS} gives $\Sel^{(\varphi)}(E'/\Q)=0$ and an explicit isomorphism of $\Sel^{(\varphi')}(E/\Q)$ with the subgroup of $\Q^\times/\Q^{\times 5}$ generated by $2$, $3$ and $13$. The image of $E(\Q)$ under the connecting homomorphism is generated by $2\cdot3\cdot 13^2$. This implies that the $\F_5$-dimensions of $\Sha(E/\Q)[\varphi]$ and $\Sha(E'/\Q)[\varphi']$ are $0$ and $2$, respectively.

The classes of $2^{\pm 1}$ modulo $\Q^{\times 5}$ are represented by the genus one normal curve:
\[ C : \left\{ \begin{array}{cc}
2u_1u_2 - u_3u_5 - 6u_4^2 & =0\\
    4u_1u_5 - u_2u_4 - 6u_3^2 & =0\\
    13u_1u_4 - 6u_2u_3 + 2u_5^2 & =0\\
    13u_1u_3 + u_2^2 - 12u_4u_5 & =0\\
    26u_1^2 + u_2u_5 - 36u_3u_4 & =0\\ \end{array} \right\} \subset \BP^4\,.\]
This has good reduction outside the primes dividing $1950$. The action of $E'[\varphi'] = \mu_5$ on $C$ is, after identifying $Q \in E'[\varphi']$ with $\zeta \in \mu_5$, given by
\[ Q + (x_1:x_2:x_3:x_4:x_5) = (x_1:\zeta x_2:\zeta^2 x_3: \zeta^3 x_4: \zeta^4 x_5)\,. \] The quotient by this action gives $C$ the structure of a $\varphi'$-covering of $E$ (well defined up to composition with a translation by a point in $E(\Q) \simeq \Z/5\Z$). To prove that $\Sha(E/\Q)[5]=0$ it suffices to show that the $\varphi$-Selmer set of $C$ is empty.

The quotient evidently identifies the points lying on any given coordinate hyperplane. Let $F = \Q(\theta)$, where $\theta$ is a $5$-th root of $2$. The hyperplane $\{ x_1 = 0 \}$ intersects $C$ transversely at the point  $P = (0:-6\theta^3:-\theta^2:\theta:6)$ and at each of its $G_\Q$-conjugates. These $5$ points form a torsor $X$ under $E'[\varphi']=\mu_5$. They are flex points; the linear form \[ \frak{l}_1 = 1871u_1 + 330\theta u_2 + 1224\theta^2u_3 + 1224\theta^3u_4 + 330\theta^4u_5 \] defines a hyperplane meeting $C$ at $P$ with multiplicity $5$.

One can check that $N_{F/\Q}(\frak{l}_1) \equiv u_1^5$ modulo the homogeneous ideal of $C$, so the constant $c \in \Q^\times$ corresponding to our choice for $\frak{l}_1$ is $1$. $\frak{l}_1$ has good reduction at all primes, so the fake Selmer set is contained in the set \[ V := \left\{ \delta \in \frac{F^\times}{\Q^\times F^{\times 5}} \,:\, \begin{array}{c} \text{ $\delta$ is unramified outside $\{2,3,5,13\}$ }\\ \text{ and $N_{F/\Q}(\delta) \in \Q^{\times 5}$ }\end{array} \right\}\,. \]

$F$ has class number $1$, so to compute $V$ we only need generators of a subgroup of the $\{2,3,5,13\}$-unit group of $F$ of finite index prime to $5$. This can be achieved through standard algorithms. One finds that $V$ is a cyclic group of order $5$, generated by the unit $\alpha = \theta^3 + \theta^2 - 1$.

To cut this down any further we need to make use of the local conditions at the primes in $\{2,3,5,13\}$. First we consider $p = 3$. $E$ has split multiplicative reduction and the Tamagawa numbers of $E$ and $E'$ at $3$ satisfy $c_3(E)/c_3(E') = 5$. This implies that $E'(\Q_3)/\varphi(E(\Q_3)) = 0$ (see for example \cite[Section 3]{MillerStoll}). It follows that the local image $\frak{l}_{1,3}(C(\Q_3)) \subset F_3^\times/\Q_3^\times F_3^{\times 5}$ consists of a single element. The $\F_3$-point $(2:1:1:2:1)$ on $C$ is nonsingular. So it lifts to a $\Q_3$-point in the $3$-adic neighborhood \[ U = (2+O(3):1+O(3):1+O(3):2+O(3):1+O(3)) \subset \BP^4(\Q_3)\,.\] One can check that $\frak{l}_1(2,1,1,2,1)$ is a unit in $F_3$ (i.e. has valuation $0$ in each factor of $F_3$). Hence for every $P \in U$, the class of $\frak{l}_1(P)$ in $F_3/F_3^{\times 5}$ is the same. A direct computation shows that $\frak{l}_1(2,1,1,2,1) \equiv \alpha^2 \not\equiv 1 \mod \Q_3^\times F_3^{\times 5}$. It follows that the fake Selmer set must be contained in $\{ \alpha^2 \} \subset V$.

We now consider the local condition at $p = 5$. We have $\dim E'(\Q_5)/\varphi(E(\Q_5)) = 1$. As above we find neighborhoods
\begin{align*}
U_1 &:= ( 59 + O(5^3) : 65 + O(5^3) : 14 + O(5^3) : 49 + O(5^3) : 1 + O(5^3) ) \subset \BP^4(\Q_5)\,,\\
U_2 &:= ( 109+ O(5^3) :  10+ O(5^3) : 29+ O(5^3) :  89+ O(5^3) :  1 + O(5^3) ) \subset \BP^4(\Q_5) \,,
\end{align*}
which contain points in $C(\Q_5)$. Evaluating $\frak{l}_1$ at the coordinates of any point in either neighborhood gives a unit in $F_5$. Here $F_5/\Q_5$ is a totally ramified field extension, so the class of a unit modulo $5$-th powers is determined by it class modulo $5^3$. 
Hence, $\frak{l}_{1,5}$ is constant on these neighborhoods. If $a = \frak{l}_{1,5}(59, 65, 14, 49, 1)$ and $b = \frak{l}_{1,5}(109, 10, 29, 89, 1)$, then $\frak{l}_{1,5}(C(\Q_5))$ is the affine line in $F_5^\times/\Q_5^\times F_5^{\times 5}$ consisting of classes represented by some $a^m\cdot b^n$ with $m + n \mod 5 \equiv 1$. One can check that $\res_5(\alpha^2)$ does not lie on this line. This shows that the fake $\varphi$-Selmer set is empty. 

Specifically, this shows that there exists no $\varphi$-covering of $C$ that is locally solvable outside $\{2,13\}$ (since we didn't use the local conditions there). From Lemma \ref{phidiv} and remark \ref{phidivremark} it follows that $\Sha(E/\Q)[5]=0$ as expected.

\subsection{The pair $(1230k,7)$}
Let $E$ be the curve labeled $1230k1$ in Cremona's Database. As above $E(\Q)$ is cyclic of order $7$. We let $\varphi:E \to E'$ by the quotient of $E$ by $E(\Q)$. Fisher's method for $7$-isogeny descent gives an explicit isomorphism of the $\varphi'$-Selmer group of $E$ with the subgroup of $\Q^\times/\Q^{\times 7}$ generated by $\{2,3,5\}$. Up to sign, the class in  $\Sha(E'/\Q)[\varphi']$ corresponding $9\Q^{\times 7}$ is represented by the curve:

\[ C : \left\{ \begin{array}{cc}
5u_1^2 - 3u_3u_6 + u_2u_7 &=0 \\
10u_1^2 - 3u_4u_5 + 12u_2u_7 &=0 \\
6u_2^2 + u_1u_3 - u_4u_7 &=0 \\
2u_2^2 + 2u_1u_3 - u_5u_6 &=0 \\
6u_3^2 + u_2u_4 - 5u_1u_5 &=0 \\
3u_3^2 + 3u_2u_4 - 5u_6u_7 &=0 \\
u_4^2 + u_3u_5 - 10u_2u_6 &=0 \\
3u_4^2 + 18u_3u_5 - 50u_1u_7 &=0 \\
3u_5^2 + u_4u_6 - 10u_3u_7 &=0 \\
50u_1u_2 - 3u_5^2 - 6u_4u_6 &=0 \\
5u_1u_4 - 6u_6^2 - u_5u_7 &=0 \\
5u_2u_3 - u_6^2 - u_5u_7 &=0 \\
3u_2u_5 - u_1u_6 - 2u_7^2 &=0 \\
3u_3u_4 - 6u_1u_6 - 2u_7^2 &= 0 \end{array} \right\} \subset \BP^6\,.\]

The first $\varphi$-descent shows that this curve violates the Hasse principle. The second $\varphi$-descent shows that it does not lift to an element of order $7$ in $\Sha(E/\Q)$.

Any coordinate hyperplane intersects $C$ in $7$ distinct flexes. The most convenient to work with are those given by $P = (0:\theta^5:-\theta^4:-6\theta^3:6\theta^2:3\theta:-9)$ defined over $F = \Q(\theta)$, where $\theta$ is a $7$-th root of $9$. The $7$ possible choices correspond to the $7$ distinct lifts of the class represented by $C$ to the $\varphi'$-Selmer group.

The hyperplane defined by \[ \frak{l}_{1} = 250111u_1 - 209538\theta u_2 + 102354\theta^2u_3 - 29225\theta^3u_4 + 29225\theta^4u_5 - 34118\theta^5u_6 + 23282\theta^6u_7\,,\] meets $C$ at $P$ with multiplicity $7$. Modulo the homogeneous ideal of $C$ we have $N_{F/\Q}(\frak{l}_1) \equiv (41^{-2}u_1)^7$, so again we may take $c = 1$. We then compute the class group of $F$ (it is trivial) and generators for a finite, and prime to $7$, index subgroup of the $\{2,3,5,7,41\}$-unit group of $F$. Using these we determine representatives in $F^\times$ for the subset of the unramified outside $\{2,3,5,7,41\}$-subgroup of $F^\times/\Q^\times F^{\times 7}$ consisting of elements whose norm is a $7$-th power. This gives a $5$-dimensional space which contains the fake $\varphi$-Selmer set. As in the previous example, the local conditions for $p \in \{2,3,5,7,41\}$ can then be used to reduce this to the empty set, establishing that the $\varphi$-Selmer set of $C$ is empty as well.

\subsection{A full $5$-descent}
Consider the elliptic curve $E/\Q$ with Weierstrass equation
\[ y^2 + xy + y = x^3 + x^2 - 12241995603x + 781027222459441\,.\] An $L$-function computation shows that $E$ has rank $1$. Solving for $\Reg(E(\Q))\cdot\#\Sha(E/\Q)$ in the conjectural formula yields $242.0138...$. This suggests that any point of infinite order in $E(\Q)$ will be very large. We do first and second $5$-isogeny descents to compute a generating set for $E(\Q)$.

$E(\Q)$ contains the point $P = (-49091 : 35573052 : 1)$ of order $5$. The quotient of $E$ by the subgroup generated by $P$ gives a $5$-isogeny $\varphi: E \to E'$. The $\varphi$- and $\varphi'$-Selmer groups of $E'$ and $E$ have dimensions $0$ and $2$, respectively. This confirms the fact that $E$ and $E'$ have rank $1$ and that there is no nontrivial $5$-torsion in $\Sha$ for either curve. The genus one normal curve
\[ C : \left\{ \begin{array}{cc}
 163u_1u_5 - u_2u_4 - u_3^2&=0\\
 u_1u_3 + u_2^2 - 326u_4u_5&=0\\
 u_1^2 + 467u_2u_5 - 2u_3u_4&=0\\
 u_1u_2 - 467u_3u_5 - 2u_4^2&=0\\
 u_1u_4 - u_2u_3 + 76121u_5^2 &=0
\end{array} \right\} \subset \BP^4\,,\] together with the appropriate covering map $C \to E$, represents the image of a generator of the free part of $E(\Q)$ under the connecting homomorphism $E(\Q) \to \Sel^{(\varphi')}(E/\Q)$. We know $C(\Q)$ is nonempty, but a naive search still reveals no $\Q$-points. A $\varphi$-descent on $C$ computes that the algebraic $\varphi$-Selmer set has size $1$ (this is in agreement with the fact that $\varphi$-Selmer group of $E'$ is trivial). Using the method of section \ref{Geometry} we construct the corresponding $\varphi$-covering. To ensure the coefficients of our model are manageable we use the minimization and reduction algorithms for genus one normal curves implemented in {\tt Magma} by Fisher \cite{FisherMinRed5}. What we obtain is the curve $D \subset \BP^4$ with defining equations
{\tiny 
\begin{align*}
&5z_1z_2 - 2z_1z_3 + 3z_1z_4 + 4z_1z_5 - 5z_2^2 - 8z_2z_3 + 8z_2z_4 - 4z_2z_5 -
 5z_3z_4 + z_3z_5 + 6z_4^2 - 2z_4z_5 + z_5^2=0\,,\\
&4z_1z_2 + 4z_1z_3 + 3z_1z_5 - 4z_2^2 - 7z_2z_3 - z_2z_4 - 4z_3^2 - 8z_3z_5 +
     8z_4^2 - z_4z_5 + 4z_5^2=0\,,\\
&3z_1z_2 - 10z_1z_3 + z_1z_4 + 3z_1z_5 - 3z_2^2 + 6z_2z_3 - 6z_2z_4 + 3z_2z_5 -
     6z_3z_5 - z_4^2 - 3z_4z_5 + 4z_5^2=0\,,\\
&5z_1z_2 + 2z_1z_3 + 3z_1z_4 + 3z_1z_5 - z_2^2 + 4z_2z_3 + 3z_2z_4 - 8z_2z_5 +
     3z_3z_4 - 3z_3z_5 + 2z_4^2 - 6z_4z_5 - 2z_5^2=0\,,\\
&4z_1^2 + 9z_1z_2 - z_1z_3 - 5z_1z_4 + 2z_1z_5 - 9z_2^2 - z_2z_3 - z_2z_4 + 9z_2z_5
     - 3z_3z_5 - 2z_4^2 + 3z_4z_5 - 10z_5^2=0\,,
\end{align*}}
together with a degree $5$ covering map $D \stackrel{\pi}{\to} C$. In a couple minutes one finds the rational point \[Q = (8576638489: 4495315592: 7115424631: -2573365369:8465644680) \in D(\Q)\,.\] As expected the coordinates of the image $\pi(Q) \in C(\Q)$ have approximately $5$ times as many digits; they are
\begin{align*}
u_1 &= -47781179424001250276101034444306427793974640994508803\,,\\
u_2 &= -36805769809432466564750059701585425584354450037869765\,,\\
u_3 &= 11567437127698252390861515883750795832342708671332291\,,\\
u_4 &= 24705602119472788155755723752744787614294729359169672\,,\\
u_5 &= 99572421720530424479069471725920845332991347451591\,.
\end{align*}
The image of $Q$ under the composition $D \to C \to E$ has infinite order. One can check directly that the  canonical height of the point\footnote{For the reader's sake the actual coordinates are omitted. The naive logarithmic height of the $x$-coordinate is just over $100$. As one could have anticipated, this is approximately $10$ times that of the coordinates of $Q$ itself.} is $242.0138...$ and that it, together with the torsion point $P$, generates $E(\Q)$. Its image under $\varphi$ generates $E'(\Q)$ (since $E'(\Q)/\varphi(E(\Q)) \subset \Sel^{(\varphi)}(E'/\Q) = 0$). In particular, the smallest nontrivial point on $E'$ has canonical height $5\cdot242.0138... = 1210.069...\,$. So it would have been no easier to work on the isogenous curve.

Finding the generator via a $4$-descent on $E$ would likely have required searching for points on a $4$-covering up to the impractical naive height of $10^{13}$. One could potentially improve this by extending to either an $8$- or a $12$-descent (both implemented in {\tt Magma}). The later requires class and unit group computations in a number field of degree $8$ and relatively large discriminant. While not entirely infeasible this would take a significant amount of time, even without requiring that the computations be performed rigorously. The $8$-descent runs into problems factorizing a 1600 digit integer which plays much the same role as our constant $c \in \Q^\times$. By way of contrast, our computation was complete in about one minute (the majority of which was spent searching for points on $D$).

%==========================================================================

\end{document}